\documentclass[12pt]{amsart}%
\usepackage{amsfonts}
\usepackage{amsmath}
\usepackage{amssymb}
\usepackage{graphicx}%
\makeatletter
\@addtoreset{equation}{section}
\makeatother

\marginparwidth -1cm \oddsidemargin 0cm \evensidemargin 0cm
\topmargin 0pt \textheight 210mm \textwidth 160mm
\setcounter{MaxMatrixCols}{30}
\newtheorem{theorem}{Theorem}[section]

\newtheorem{lemma}[theorem]{Lemma}

\newtheorem{proposition}[theorem]{Proposition}
\newtheorem{remark}[theorem]{Remark}

\makeatletter
\def\@makefnmark{}
\makeatother

\begin{document}
\title[Trace Trudinger-Moser and Adams inequalities]{Sharp critical and subcritical trace Trudinger-Moser and Adams inequalities on the upper half spaces}

\author{Lu Chen, Guozhen Lu,  Qiaohua Yang, MaocHun Zhu}
\address{School of Mathematics and Statistics, Beijing Institute of Technology, Beijing 100081, P. R. China}
\email{chenlu5818804@163.com}

\address{Department of Mathematics\\
University of Connecticut\\
Storrs, CT 06269, USA}
\email{guozhen.lu@uconn.edu}

\address{School of Mathematics and Statistics\\
Wuhan University\\
 Wuhan, 430072, P. R. China}
\email{qhyang.math@gmail.com; qhyang.math@whu.edu.cn}

\address{School of Mathematical Sciences, Institute of Applied System Analysis\\
Jiangsu University\\
Zhenjiang, 212013, P. R. China}
\email{zhumaochun2006@126.com}

\thanks{The first author was supported by NNSF (No.11901031); the second author was partly supported by a grant from the Simons Foundation; the third author was  partially supported by NNSF (No.12071353); the fourth author was
partially supported   by NNSF  (No. 12071185).}

\begin{abstract}
    In this paper, we establish the sharp critical and subcritical trace Trudinger-Moser and Adams inequalities on the half spaces and prove the existence of their extremals through the method based on the Fourier rearrangement, harmonic extension and scaling invariance. These  trace Trudinger-Moser and Adams inequalities  can be considered as the borderline case of the Sobolev trace inequalities of first and higher orders. Furthermore, we show the existence of the least energy solutions for a class of bi-harmonic equations with nonlinear Neumann boundary condition associated with the trace Adams inequalities.
\end{abstract}

\maketitle {\small {\bf Keywords:} Trace Trudinger-Moser inequality, Trace Adams inequality, Nonlinear Neumann boundary condition, Harmonic extension, Pohozaev identity, Ground state, Fourier rearrangement. \\

{\bf 2010 MSC.} 35J60, 35B33, 46E30.}
\section{Introduction}
The main content of this paper is concerned with the problem of finding optimal trace Trudinger-Moser and Adams inequalities and existence of their extremals in unbounded domain. Sharp Trudinger-Moser inequalities and its high-order form (Adams inequalities) have been a matter of intensive research due to the importance of these inequalities in application to problems in analysis, PDEs, differential geometry, mathematical physics, etc.
\medskip

The Trudinger inequality as the critical case of the Sobolev imbedding was obtained by Trudinger \cite{Tru}, Pohazaev \cite{Pohazaev} and Yudovic \cite{Yudovic}. More precisely,
Trudinger employed the power series expansion to prove that there exists $\alpha>0$ such that
\begin{equation}\label{Tru}
\sup_{\|\nabla u\|_n^n\leq 1, u\in W^{1,n}_0(\Omega)}\int_{\Omega}\exp(\alpha|u|^{\frac{n}{n-1}})dx<\infty,
\end{equation}
where $\Omega\subseteq \mathbb{R}^n$ is a smooth bounded domain and $W^{1,n}_0(\Omega)$ denotes the usual Sobolev space, i.e, the completion of $C_{0}^{\infty}(\Omega)$
with the norm $$\|u\|_{W^{1,n}_0(\Omega)}=\left(\int_{\Omega}|\nabla u|^ndx\right)^{\frac{1}{n}}.$$
Subsequently, Moser \cite{Mo} utilized the technique of the symmetry and rearrangement to give the sharp constants $\alpha_n=nw_{n-1}^{\frac{1}{n-1}}$ of the above inequality. Such an inequality in the sharp form  of constant $\alpha_n$ is now called  the \emph{Trudinger-Moser} inequality. Since the Poly\'{a}-Szeg\"{o} inequality on which the technique of the symmetry and rearrangement depends is not valid on the high-order Sobolev space, this adds much challenge to the research of high-order Trudinger-Moser inequalities, namely Adams type inequalities. Adams employed the method of representation formulas together the method of the rearrangement of convolutions by O'Neil \cite{Oneil} to establish the sharp high-order Trudinger-Moser inequality on any bounded domain. More precisely, he proved the following result.
\vskip0.1cm

\noindent\textbf{Theorem A.} (\cite{AD})
\textit{ Let $\Omega$ be an open and bounded set in $\mathbb{R}^n$. If $m$ is a positive integer less than $n$, then there exists a constant $C_0=C(n,m)>0$ such that for any $u\in W_0^{m,\frac{m}{n}}(\Omega)$ and $\|\nabla^m u\|_{\frac{n}{m}}\leq 1$, then
\begin{equation}\label{adams1}
\frac{1}{|\Omega|}\int_{\Omega}\exp(\beta|u(x)|^{\frac{n}{n-m}})dx\leq C_0,
\end{equation}}
\textit{for all $\beta\leq \beta(n,m)$, where }
\[
\beta(n,m)=
\begin{cases}
\frac{n}{\omega_{n-1}}\big[\frac{\pi^{\frac{n}{2}}2^m\Gamma(\frac{m+1}{2})}{\Gamma(\frac{n-m+1}{2})}
\big]^{\frac{n}{n-m}}, &\text{if $m$ is odd.}
\\
\frac{n}{\omega_{n-1}}\big[\frac{\pi^{\frac{n}{2}}2^m\Gamma(\frac{m}{2})}{\Gamma(\frac{n-m}{2})}
\big]^{\frac{n}{n-m}}, &\text{if $m$ is even.}\\
\end{cases}
\]
Furthermore, the constant $\beta(n,m)$ is best possible in the sense that for any $\beta>\beta(n,m)$, the integral can be made as large as possible.
\vskip 0.2cm

Later, Moser's results for the first order derivatives and Adams' result for the high order derivatives have been extended to the unbounded domains.
  The first order Trudinger-Moser inequality was proved in \cite{Cao}, 
\cite{J.M. do1} and  \cite{Adachi-Tanaka} in the subcritical case,  that is for any $\alpha<\alpha_{n}$, there exists a constant $C=C(\alpha, n)$ such that

 \begin{equation}\label{LR}
\sup_{\|\nabla u\|^n_{n}\leq 1}\int_{\mathbb{R}^n}\Phi (\alpha|u|^{\frac{n}{n-1}})dx\leq C \|u\|_n^n,
\end{equation}
 where $\Phi\left(  t\right)  =e^{t}-\underset{j=0}{\overset{n-2}{\sum }}t^j$.

 Later, it is showed in  \cite{LR,R}  that the exponent
$\alpha_{n}$ becomes admissible if the Dirichlet norm
$(\int_{\mathbb{R}^n}\left\vert \nabla u\right\vert ^{n}dx)^{\frac{1}{n}}$ is replaced by
Sobolev norm $\|u\|_{W^{1,n}(\mathbb{R}^n)}=(\int_{\mathbb{R}^n}\left(  \left\vert u\right\vert
^{n}+\left\vert \nabla u\right\vert ^{n}\right)  dx)^{\frac{1}{n}}$, more
precisely, they proved the following critical Trudinger-Moser inequality
\begin{equation}\label{LR}
\sup_{\|u\|_{W^{1,n}(\mathbb{R}^n)}\leq 1}\int_{\mathbb{R}^n}\Phi (\alpha_n|u|^{\frac{n}{n-1}})dx\leq C_n<\infty.
\end{equation}
  All the proofs of both critical and subcritical Trudinger-Moser inequalities in the literature rely on rearrangement argument and the Poly\'{a}-Szeg\"{o} inequality.  The authors of \cite{LL5}, \cite{LL2}   used a symmetrization-free approach to give a simple proof for both critical and subcritical sharp Trudinger-Moser inequalities in $W^{1,n}(\mathbb{R}^n)$. It should be noted that this approach is surprisingly simple and can be easily applied to other settings where symmetrization argument does not work. Furthermore, they also develop this new approach to establish the global Trudinger-Moser inequalities on the entire Heisenberg group from the local one derived in \cite{CL} and fractional Adams inequalities in $W^{s,\frac{n}{s}}(\mathbb{R}^n)$ ($0<s<n$) (see \cite{LL5,LL2, LamLuTang}). For more applications of the symmetrization-free approach, please also see \cite{LaZ, LiLu0, LLZ, ZC}. Moreover, the critical and subcritical Trudinger-Moser inequalities are equivalent as shown in \cite{LaZ}.
\medskip

As far as the existence of extremal functions of Trudinger-Moser inequality on bounded domains, the first breakthrough was due to Carleson and Chang \cite{CC} who established the existence on balls. (See also \cite{Flu}, \cite{lin} for smooth domains and the methods of using expansion formula of Dirichlet integrals \cite{MM}, \cite{ManciniMartinazzi}.)

Since the main purpose of this paper is to establish the trace Trudinger-Moser and Adams type inequalities 
on unbounded domains, we will only briefly review of 
 the trace Trudinger-Moser and Adams inequalities on bounded domains. 
Let $\Omega$ be bounded domain in $\mathbb{R}^n$ with smooth boundary and define the  $W^{1,2}(\Omega)$ to be the Sobolev space, equipped with the norm
$$\|u\|_{W^{1,2}(\Omega)}=\big(\int_{\Omega}|(\nabla u|^2+|u|^2)dx\big)^{\frac{1}{2}}.$$
As is well known, classical Sobolev trace embedding on bounded domain asserts that $W^{1,2}(\Omega)\subset L^{q}(\partial\Omega)$ for $1\leq q\leq \frac{2(n-1)}{n-2}$ and $n>3$. In the borderline case $n=2$, the
$W^{1,2}(\Omega)\subset L^q(\partial\Omega)$ for $1\leq q<\infty$, but $W^{1,2}(\Omega)\nsubseteq L^\infty(\partial\Omega)$. To fill this gap, the authors of \cite{AD0, Adimurthi,  Cia, Cherrier,  LiLiu} studied the trace Trudinger-Moser inequalities and established the following
\vskip 0.1cm

\noindent\textbf{Theorem B.}
Let $\Omega$ be a sufficiently smooth bounded domain in $\mathbb{R}
^{2}$. For any $\alpha\leq \pi$,  then there exists a positive constant $C_n$ such that
\begin{equation}\label{Trace-Trudinger}
\int_{\partial\Omega}\exp(\alpha
|u|^{2}) d\sigma\leq C_n,
\end{equation}
for any $u\in C^{\infty}(
\Omega)$ with $\int_{\Omega} (|\nabla u|
^{2}+|u|^2)dx\leq1$.
\medskip

We should mention that the critical trace inequality of Trudinger-Moser type on balls in $\mathbb{R}^2$ for holomorphic functions with mean value zero was first established by Chang and Marshall \cite{ChangMarshall}. Recently, the authors  in \cite{LiLu} establish the Chang-Marshall type inequality  to sufficiently  smooth domains $\Omega$ in $\mathbb{R}^n$ for all $n\ge 2$ for any Sobolev function $u$ with mean value $\int_{\Omega}udx=0$. We should also note that the author of \cite{Cia} established Trudinger-Moser trace type inequalities on lower dimensional subsets
of $\Omega$ with respect to measures supported on the subsets.

 In a half space, a particular unbounded domain with the smooth boundary, sharp geometrical inequalities such as trace Sobolev inequalities on this domain have attracted much attention due to their importance in geometrical analysis and PDEs (see \cite{Ache, Be, Es,NNP,Yang}). We recall some classical results that involve the sharp trace Sobolev inequalities on the half space. Sharp trace Sobolev inequalities for first order derivative of Escobar \cite{Es} assert that
\vskip0.1cm

\noindent\textbf{Theorem C.}(\cite{Es})
Let $n\geq 3$. Then for any $U\in W^{1,2}(\overline{\mathbb{R}^n_{+}})$, there holds
\begin{equation}
2\frac{\Gamma(\frac{n}{2})}{\Gamma(\frac{n-1}{2})}w_{n-1}^{\frac{1}{n-1}}\big(\int_{\partial\mathbb{R}^n_{+}}|U(x, 0)^{\frac{2(n-1)}{n-2}}dx\big)^{\frac{n-2}{n-1}}
\leq \int_{\mathbb{R}^{n}_{+}}|\nabla U(x,y)|^2dxdy,
\end{equation}
where $(x,y)\in \mathbb{R}^{n-1}\times \mathbb{R}^{+}$.
Furthermore, the equality case is achieved by a harmonic extension of a function of the form
$c(1+|x-x_0|^2)^{-\frac{n-2}{2}}$, where $c$ is a constant, $x\in \mathbb{R}^{n-1}$, $x_0$ is some fixed point in $\mathbb{R}^{n-1}$.
\vskip 0.1cm
By conformal map, this inequality is in fact equivalent to the trace Sobolev inequalities on the ball (see Beckner \cite{Be}),
\begin{equation}\label{trace so}
\frac{n-2}{2}w_{n-1}^{\frac{1}{n-1}}\big(\int_{\mathbb{S}^{n-1}}|f|^{\frac{2(n-1)}{n-2}}d\sigma\big)^{\frac{n-2}{n-1}}\leq \int_{\mathbb{B}^{n}}|\nabla v|^2dx+\frac{n-2}{2}\int_{\mathbb{S}^{n-1}}|f|^2d\sigma,\end{equation}
where $v$ is a smooth extension of $f$ to $\mathbb{B}^{n}$ and $d\sigma$ is the surface measure on $\mathbb{S}^{n-1}$.

 Motivated by the above works, we are concerned with the borderline case of trace Sobolev inequality, that is the sharp trace Trudinger-Moser inequalities in $W^{1,2}(\mathbb{R}^{2}_{+})$. We first establish the subcritical and critical trace-type Trudinger-Moser inequalities on the half space. Our results read as
\begin{theorem}\label{addthm1}
For $0<\beta<\pi$, then there exists a positive constant $C$ such that for all functions $U\in W^{1,2}(\overline{\mathbb{R}^2_{+}})$  with $\int_{\mathbb{R}^2_{+}}|\nabla U(x,y)|^2dxdy\leq 1$, the following inequality holds
\begin{equation}\label{addsub}
\int_{\partial\mathbb{R}^{2}_{+}}(\exp(\beta|U(x,0)|^{2})-1)dx
\leq C \int_{\partial\mathbb{R}^{2}_{+}}|U(x,0)|^{2}dx.
\end{equation}
 Furthermore, the smallest $C$ in \eqref{addsub} can be attained by some $U$, which is a harmonic extension of
some even function $u\in W^{\frac{1}{2},2}(\mathbb{R})$.
Moreover, the constant $\pi$ is sharp in the sense that the inequality \eqref{addsub} fails if the constant $\beta$ is replaced by any $\beta\geq \pi$.
\end{theorem}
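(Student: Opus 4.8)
The plan is to move the inequality onto the real line via the Poisson (harmonic) extension, to symmetrize in the frequency variable, and then to run a direct-method argument for the extremal; the constant $\pi$ will be forced by the trace Moser functions. For $U\in W^{1,2}(\overline{\mathbb{R}^2_{+}})$ with boundary trace $u:=U(\cdot,0)\in W^{\frac12,2}(\mathbb{R})$, the harmonic extension $P[u]$ minimizes $\int_{\mathbb{R}^2_{+}}|\nabla V|^2\,dxdy$ among all extensions $V$ of $u$, and that minimum equals $\int_{\mathbb{R}}|\xi|\,|\widehat u(\xi)|^2\,d\xi=\|(-\Delta)^{1/4}u\|_{L^2(\mathbb{R})}^2$ (Plancherel in the tangential variable). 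Since replacing $U$ by $P[u]$ changes neither side of \eqref{addsub} and does not increase $\int|\nabla U|^2$, inequality \eqref{addsub} is equivalent to the one-dimensional statement
\begin{equation}\label{p:line}
\int_{\mathbb{R}}\bigl(e^{\beta u^2}-1\bigr)\,dx\ \le\ C\int_{\mathbb{R}}u^2\,dx\qquad\text{whenever}\quad\int_{\mathbb{R}}|\xi|\,|\widehat u(\xi)|^2\,d\xi\le 1,
\end{equation}
and an extremal of \eqref{addsub} will be $P[u_0]$ for an extremal $u_0$ of \eqref{p:line}. Both members of \eqref{p:line} and its constraint scale identically under $u\mapsto u(\lambda\cdot)$, so the smallest admissible $C$ is $S_\beta:=\sup\{\int_{\mathbb{R}}(e^{\beta u^2}-1)\,dx:\ \int_{\mathbb{R}}|\xi||\widehat u|^2\le1,\ \|u\|_{L^2(\mathbb{R})}=1\}$. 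Finally I would apply the Fourier rearrangement $u^{\sharp}:=\mathcal F^{-1}\bigl((\mathcal F u)^{\ast}\bigr)$, with $\ast$ the symmetric decreasing rearrangement on $\mathbb{R}$: then $u^{\sharp}$ is real and even, $\|u^{\sharp}\|_{L^2}=\|u\|_{L^2}$, $\int|\xi||\widehat{u^{\sharp}}|^2\le\int|\xi||\widehat u|^2$ (because the weight $|\xi|$ is radially nondecreasing), and $\|u^{\sharp}\|_{L^{2k}}\ge\|u\|_{L^{2k}}$ for every integer $k\ge1$ (since $\widehat{u^{k}}$ is a $k$-fold convolution of $\widehat u$, so $|\widehat{u^{k}}|$ is pointwise below the $k$-fold convolution of $|\widehat u|$, and the Riesz rearrangement inequality applies); expanding $e^{\beta t^2}-1=\sum_{k\ge1}\beta^{k}t^{2k}/k!$ then gives $\int(e^{\beta u^2}-1)\le\int(e^{\beta (u^{\sharp})^2}-1)$. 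Hence in \eqref{p:line}, and in the search for extremals, one may assume $u=u^{\sharp}$, i.e.\ $u$ even with $\widehat u\ge0$ even and radially nonincreasing; this accounts for the evenness asserted in the theorem and (in the higher-order Adams analogue) substitutes for the unavailable Poly\'{a}--Szeg\"{o} inequality.

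To prove \eqref{p:line} for $0<\beta<\pi$, note that for $u=u^{\sharp}$ monotonicity of $\widehat u$ forces $|\widehat u(\xi)|\le\sqrt2\,|\xi|^{-1}\bigl(\int|\eta||\widehat u|^2\bigr)^{1/2}$, so no $L^2$-mass of $\widehat u$ sits at high frequency, and one derives the sharp growth estimate $\|u\|_{L^{2k}(\mathbb{R})}^{2k}\le C_k\,\pi^{-(k-1)}\,k!\,\|u\|_{L^2(\mathbb{R})}^2$ ($C_k$ polynomial in $k$) under the constraint $\int|\xi||\widehat u|^2\le1$; here $\pi$ is precisely the constant for which the subsequent summation converges, and for the strictly subcritical range it is enough to obtain it with room to spare, e.g.\ by localizing $U$ onto half-discs and invoking the bounded-domain trace inequality (Theorem B). Summing the Taylor series then gives \eqref{p:line} with a finite $C=C(\beta)$ blowing up like $(\pi-\beta)^{-1}$, the series converging precisely because $\beta<\pi$. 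For the sharpness of $\pi$ I would test \eqref{addsub} (through \eqref{p:line}) against the trace Moser functions concentrated at the origin at scale $\varepsilon$, namely the function equal to $\pi^{-1/2}(\log\tfrac1\varepsilon)^{1/2}$ on $\{|x|\le\varepsilon\}$, to $\pi^{-1/2}(\log\tfrac1\varepsilon)^{-1/2}\log\tfrac1{|x|}$ on $\{\varepsilon\le|x|\le1\}$ and $0$ otherwise, divided by its seminorm so that $\int|\xi||\widehat{u_\varepsilon}|^2=1$: then $\int_{|x|\le\varepsilon}(e^{\pi u_\varepsilon^2}-1)\,dx$ stays bounded below by a fixed positive number while $\|u_\varepsilon\|_{L^2}^2=O(1/\log\tfrac1\varepsilon)\to0$, so \eqref{addsub} cannot hold at $\beta=\pi$, and for $\beta>\pi$ even $\int(e^{\beta u_\varepsilon^2}-1)\,dx$ diverges.

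For the extremal I would take a maximizing sequence for $S_\beta$ and, by the reduction above, assume $u_j=u_j^{\sharp}$, so $\widehat{u_j}$ is symmetric decreasing; the sequence is bounded in $W^{\frac12,2}(\mathbb{R})$, hence $u_j\rightharpoonup u_0$ along a subsequence. The uniform tail bound $|\widehat{u_j}(\xi)|\le\sqrt2|\xi|^{-1}$ prevents $L^2$-mass from escaping to $\xi=\infty$, and monotonicity of $\widehat{u_j}$ yields a.e.\ convergence; the only remaining way compactness can fail is through a low-frequency spreading part $v_j$ (so $v_j\rightharpoonup0$ and $\|v_j\|_{L^\infty}\to0$, forced by $\int|\xi||\widehat{v_j}|^2$ being bounded) carrying the deficit $1-\|u_0\|_{L^2}^2$. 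But passing to the limit, with the Brezis--Lieb-type splitting $\int(e^{\beta u_j^2}-1)\to\int(e^{\beta u_0^2}-1)+\beta\lim\|v_j\|_{L^2}^2$ (valid since $v_j\to0$ locally uniformly and is globally small), one obtains $S_\beta\le S_\beta\|u_0\|_{L^2}^2+\beta(1-\|u_0\|_{L^2}^2)$, which forces $\|u_0\|_{L^2}=1$ because $S_\beta>\beta$ (immediate, as $e^{\beta t^2}-1>\beta t^2$ for $t\ne0$). So no mass is lost: $\widehat{u_j}\to\widehat{u_0}$ in $L^2$, hence $u_j\to u_0$ in $L^2$ and, interpolating against the uniform $L^{2k}$ bounds of the previous step, in every $L^{2k}$; dominated convergence on the series gives $\int(e^{\beta u_j^2}-1)\to\int(e^{\beta u_0^2}-1)$, so $u_0$ is even, nonzero, satisfies $\int|\xi||\widehat{u_0}|^2=1$ and realizes $S_\beta$. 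Then $P[u_0]$ is the required extremal for \eqref{addsub}.

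The crux is the compactness in the last step: understanding exactly how a maximizing sequence can degenerate. The Fourier rearrangement is what makes this tractable — in the frequency variable the symmetric-decreasing constraint excludes concentration (escape of mass to high frequencies) outright, so only a low-frequency spreading mode survives, and that is killed by the strict gap $S_\beta>\beta$, a feature of the subcritical regime. A secondary technical point is extracting the sharp constant $\pi$ in the $L^{2k}$ estimate; because only $\beta<\pi$ is needed there is slack, so no delicate Carleson--Chang-type analysis is required here.
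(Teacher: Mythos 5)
Your reduction to the one--dimensional statement via the harmonic (Poisson) extension, the Dirichlet principle, and the Plancherel identity $\int_{\mathbb{R}^2_+}|\nabla P[u]|^2=\|(-\Delta)^{1/4}u\|_{L^2}^2$ is exactly the route the paper takes, and so is the use of the Fourier rearrangement $u\mapsto u^\sharp$ to symmetrize on the line. The paper, however, treats the two resulting one--dimensional facts as \emph{known inputs}: the sharp subcritical inequality $\int_{\mathbb{R}}(e^{\beta u^2}-1)\le C\|u\|_2^2$ under $\|(-\Delta)^{1/4}u\|_2\le1$ is quoted from Fontana--Morpurgo (their Lemma 2.1), and the existence and radial symmetry of its extremals is quoted from Chen--Lu--Zhang (their Lemma 2.2). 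You instead try to re-derive both, and it is precisely there that the argument has real gaps.

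First, the proof of the subcritical line inequality \eqref{p:line} is asserted, not given. The claimed moment bound $\|u\|_{L^{2k}}^{2k}\le C_k\,\pi^{-(k-1)}k!\,\|u\|_{L^2}^2$ with $C_k$ polynomial in $k$ is exactly the content of the sharp fractional Adams inequality; deriving it from the pointwise tail bound $|\widehat u(\xi)|\lesssim|\xi|^{-1}$ plus ``localizing $U$ onto half-discs and invoking Theorem~B'' is not a proof --- the cut-off destroys the sharp constant, the error terms are not controlled, and nothing in the sketch explains why the resulting constant is $\pi$ rather than something smaller. This is the heart of the theorem and cannot be waved through. Second, the compactness step is more delicate than you indicate. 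Your Brezis--Lieb splitting rests on the claim that the spreading part $v_j$ tends to $0$ locally uniformly and in $L^\infty$, which you attribute to the symmetric-decreasing shape of $\widehat{v_j}$; but that shape gives only a pointwise bound and smoothness of $v_j$, not spatial decay or local compactness. The standard compact embedding $W^{s,2}_{\mathrm{rad}}\hookrightarrow L^p$ (Strauss) that underlies the analogous higher-dimensional argument in the paper's Lemma 3.2 is \emph{not} available for $n=1$, where ``radial'' merely means ``even'' and gives no decay; this is exactly why the paper defers to the specialized 1D result in \cite{CLZ} rather than re-running the $n\ge2$ proof. Your dismissal of concentration at nonzero frequencies as impossible is correct, but the exclusion of a vanishing (physically spreading) profile requires the two normalizations $\|u_j\|_{L^2}=1$, $\int|\xi||\widehat{u_j}|^2=1$ to be played off against one another quantitatively, which you do not do.

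On the positive side, your sharpness argument via explicit Moser-type boundary functions is a legitimate alternative to the paper's (which simply inherits sharpness from the cited line inequality), and the initial reduction, the Fourier-rearrangement monotonicity $\|u^\sharp\|_{L^{2k}}\ge\|u\|_{L^{2k}}$ via $k$-fold convolution and Riesz rearrangement, and the strict gap $S_\beta>\beta$ are all correct and match the paper's use of these tools. In short: the architecture is right, but the two structural lemmas the paper imports must either be cited or proved; as written, your re-derivations of both leave genuine holes.
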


\begin{theorem}\label{addthm2}
There exists a positive constant $C$ such that for all functions $U\in W^{1,2}(\overline{\mathbb{R}^2_{+}})$ with $$\int_{\mathbb{R}^2_{+}}|\nabla U(x,y)|^2dxdy+\int_{\partial\mathbb{R}^2_{+}}|U(x,0)|^2dx\leq 1,$$ the following inequality holds
\begin{equation}\label{addcri}
\int_{\partial\mathbb{R}^{2}_{+}}\exp(\pi|U(x,0)|^{2})dx
\leq C.
\end{equation}
Furthermore, the smallest $C$ in \eqref{addcri} can be attained by some function $U$, which is a harmonic extension of
some radial function $u\in W^{\frac{1}{2},2}(\mathbb{R})$. Moreover, the constant $\pi$ is sharp in the sense that the inequality \eqref{addcri} fails if the constant $\pi$ is replaced by any $\beta>\pi$.
\end{theorem}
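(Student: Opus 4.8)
The plan is to transfer the problem to the boundary by harmonic extension, reduce to functions with even, radially decreasing Fourier transform via \emph{Fourier rearrangement}, and then produce both the uniform bound and the extremal by the direct method together with a concentration--compactness analysis; the scaling invariance of the Dirichlet integral $\int_{\mathbb{R}^2_+}|\nabla U|^2$ is used to normalize maximizing sequences and, as an alternative, to reduce the inequality to Theorem \ref{addthm1}. \textbf{Step 1 (reduction to the boundary).} For a trace $u=U(\cdot,0)\in W^{\frac12,2}(\mathbb{R})$ let $\mathcal{H}u$ be its harmonic extension to $\mathbb{R}^2_+$; since $\mathcal{H}u$ minimizes the Dirichlet integral among all extensions of $u$, we have $\int_{\mathbb{R}^2_+}|\nabla\mathcal{H}u|^2\le\int_{\mathbb{R}^2_+}|\nabla U|^2$ with the same trace, so it suffices to prove \eqref{addcri} for $U=\mathcal{H}u$. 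From $\widehat{\mathcal{H}u}(\xi,y)=e^{-|\xi|y}\widehat u(\xi)$ one computes $\int_{\mathbb{R}^2_+}|\nabla\mathcal{H}u|^2\,dx\,dy=\frac1{2\pi}\int_{\mathbb{R}}|\xi|\,|\widehat u(\xi)|^2\,d\xi=:\|u\|_{\dot W^{1/2,2}}^2$, and with this normalization the critical exponent is exactly $\pi$ (consistently with Theorem B). Thus \eqref{addcri}, the attainability, and the sharpness are equivalent to the assertion that
\[
\mathcal{S}:=\sup\Bigl\{\int_{\mathbb{R}}\bigl(e^{\pi u^2}-1\bigr)\,dx:\ u\in W^{\frac12,2}(\mathbb{R}),\ \|u\|_{\dot W^{1/2,2}}^2+\|u\|_{L^2(\mathbb{R})}^2\le1\Bigr\}
\]
is finite and attained, while the analogous supremum with $\pi$ replaced by any $\beta>\pi$ is infinite; replacing $u$ by $cu$, $c>1$, shows a maximizer must saturate the constraint, and the extremal $U$ is the harmonic extension of the maximizing $u$.

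\textbf{Step 2 (Fourier rearrangement).} Given $u$, define $u^\sharp$ by $\widehat{u^\sharp}=(|\widehat u|)^{*}$, the symmetric decreasing rearrangement on $\mathbb{R}$ of $|\widehat u|$. Then (i) $\|u^\sharp\|_{L^2}=\|u\|_{L^2}$ by Plancherel and equimeasurability; (ii) $\|u^\sharp\|_{\dot W^{1/2,2}}^2\le\|u\|_{\dot W^{1/2,2}}^2$, because the weight $|\xi|$ is symmetric increasing, so a standard rearrangement inequality gives $\int|\xi|\bigl((|\widehat u|)^{*}\bigr)^2\le\int|\xi|\,|\widehat u|^2$; and (iii) $\int_{\mathbb{R}}(e^{\pi|u^\sharp|^2}-1)\,dx\ge\int_{\mathbb{R}}(e^{\pi|u|^2}-1)\,dx$, obtained from the expansion $e^{\pi t^2}-1=\sum_{k\ge1}\frac{\pi^k}{k!}t^{2k}$ together with $\|u\|_{L^{2k}}\le\|u^\sharp\|_{L^{2k}}$ for every $k\ge1$; the latter follows because $\|u\|_{L^{2k}}^{2k}=\|\widehat{u^k}\|_{L^2}^2$, $|\widehat{u^k}|\le|\widehat u|*\cdots*|\widehat u|$ ($k$ factors), and an iterated application of the Riesz rearrangement inequality gives $\|\,|\widehat u|*\cdots*|\widehat u|\,\|_{L^2}\le\|(|\widehat u|)^{*}*\cdots*(|\widehat u|)^{*}\|_{L^2}=\|(u^\sharp)^k\|_{L^2}=\|u^\sharp\|_{L^{2k}}^{k}$. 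Consequently, in computing $\mathcal{S}$ and along any maximizing sequence we may assume $\widehat u\ge0$ is even and radially non-increasing; in particular $u$ is real and even.

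\textbf{Step 3 (finiteness of $\mathcal{S}$ and sharpness).} One may either note that, under the dilation $u\mapsto u(\cdot/\lambda)$ (which leaves $\|u\|_{\dot W^{1/2,2}}$ unchanged and multiplies $\|u\|_{L^2}^2$ and $\int(e^{\pi u^2}-1)\,dx$ by $\lambda$), the critical bound is equivalent to the subcritical Theorem \ref{addthm1}, in the spirit of \cite{LaZ}; or argue directly. In the direct argument, for $u$ with $\widehat u$ radially non-increasing and $\|u\|_{\dot W^{1/2,2}}^2+\|u\|_{L^2}^2\le1$, the monotonicity of $\widehat u$ combined with Plancherel yields a pointwise decay estimate for $u$, so that $\int_{|x|>r_0}(e^{\pi u^2}-1)\,dx\le C\int_{|x|>r_0}u^2\,dx\le C$, while on $\{|x|<r_0\}$ one applies the bounded-domain trace Trudinger--Moser inequality (Theorem B) on the half-disc $\{x^2+y^2<r_0^2,\ y>0\}$ to $\mathcal{H}u$, after a dilation chosen so that the $L^2$-energy over the half-disc is negligible in the normalization; hence $\mathcal{S}<\infty$. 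The same computation applied to a concentrating family --- the traces of suitably truncated and rescaled harmonic extensions of the fundamental solution of $(-\Delta)^{1/2}$ on $\mathbb{R}$ --- makes the integral diverge once $\beta>\pi$, which gives the sharpness.

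\textbf{Step 4 (existence of an extremal).} Let $\{u_k\}$ maximize, with $\|u_k\|_{\dot W^{1/2,2}}^2+\|u_k\|_{L^2}^2=1$; by Step 2 we take each $\widehat{u_k}\ge0$ even and radially non-increasing, and extract $u_k\rightharpoonup u_\infty$ weakly in $W^{\frac12,2}(\mathbb{R})$ and a.e. A concentration--compactness trichotomy for $\{u_k\}$ (equivalently for $\mathcal{H}u_k$) then arises: \emph{concentration} of the Dirichlet energy $|\nabla\mathcal{H}u_k|^2$ at a boundary point, \emph{spreading} to scale $\infty$ (forced by the Fourier-monotone structure when $u_\infty\equiv0$), or \emph{compactness}. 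In the compact case a Lions-type lemma gives $\limsup_k\|u_k\|_{\dot W^{1/2,2}}^2<1$, hence $e^{\pi u_k^2}-1$ is bounded in some $L^p$ with $p>1$, and, using the tail decay from Step 3, $\int_{\mathbb{R}}(e^{\pi u_k^2}-1)\to\int_{\mathbb{R}}(e^{\pi u_\infty^2}-1)$, so $u_\infty$ is an extremal. In the spreading case $\int(e^{\pi u_k^2}-1)\to\pi\lim_k\|u_k\|_{L^2}^2\le\pi$, and in the concentration case a Carleson--Chang type blow-up analysis of the harmonic extensions at the concentration point shows $\int(e^{\pi u_k^2}-1)\to\mathcal{S}_0$ for an explicit threshold $\mathcal{S}_0\ge\pi$. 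Both of the latter are incompatible with $\{u_k\}$ being maximizing once one proves the sharp lower bound $\mathcal{S}>\mathcal{S}_0$, obtained by testing against a Moser profile carrying the critical Dirichlet energy and perturbed by a fixed spread-out lower-order term that uses up the remaining $L^2$-budget. \emph{Establishing this strict lower bound is the main obstacle.} With it, $u_\infty$ is the desired extremal; applying Step 2 to $u_\infty$ shows it may be chosen with even, radially decreasing Fourier transform, so the extremal $U$ in \eqref{addcri} is the harmonic extension of an even (radial) $u\in W^{\frac12,2}(\mathbb{R})$.
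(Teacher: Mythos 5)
Your Steps~1--2 (reduce to the trace via the harmonic extension and Dirichlet principle, then use Fourier rearrangement) are in line with the paper; the paper likewise passes to the harmonic extension of $U(\cdot,0)$, uses the Dirichlet principle together with Lemma~\ref{addlem3}, and then invokes the critical fractional Trudinger--Moser inequality of Iula--Maalaoui--Martinazzi \cite{Iula} in $W^{1/2,2}(\mathbb{R})$. Your Step~3, in the version that appeals to the equivalence of critical and subcritical inequalities in the spirit of \cite{LaZ}, is a legitimate alternative to citing \cite{Iula}; the scaling $u\mapsto u(\cdot/\lambda)$ does leave the $\dot W^{1/2,2}$-seminorm invariant while scaling the $L^2$-norm and the integral of $e^{\pi u^2}-1$ by $\lambda$, exactly as needed. (Your direct argument in Step~3 is shakier: radial non-increase of $\widehat{u}$ does not by itself give the pointwise decay of $u$ that you assert, and the bounded-domain trace inequality argument on a half-disc requires more care than you give it.)

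The genuine gap is in Step~4, and you flag it yourself: the assertion that concentration is excluded rests on a strict inequality $\mathcal{S}>\mathcal{S}_0$ between the supremum and the concentration threshold, and you describe this as ``the main obstacle'' without carrying it out. This strict inequality is precisely the hard analytic content of the existence of extremals for a \emph{critical} Moser--Trudinger inequality; without it, the trichotomy in Step~4 is inconclusive, since both the spreading and the concentrating alternatives remain live possibilities. The paper does not attempt this blow-up analysis: it cites Mancini and Martinazzi \cite{MiMar}, who establish the existence of extremals for the critical fractional Moser--Trudinger inequality in $W^{1/2,2}(\mathbb{R})$ via harmonic extensions and commutator estimates; the extremal $U$ in Theorem~\ref{addthm2} is then just the harmonic extension of that maximizer (and Fourier rearrangement, exactly as you apply it, shows the maximizer may be taken even and real). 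So the inequality and its sharpness in your proposal are essentially sound up to Step~3, but the existence claim is not proved: you need either to carry out the Carleson--Chang-type energy-expansion argument that excludes concentration, or to replace Step~4 by an appeal to \cite{MiMar} as the paper does.
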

\begin{remark}
The difference between Theorem \ref{addthm1} and Theorem \ref{addthm2} is that
we assume only $\int_{\mathbb{R}^2_{+}}|\nabla U(x,y)|^2dxdy\leq 1$ in Theorem \ref{addthm1} and assume $$\int_{\mathbb{R}^2_{+}}|\nabla U(x,y)|^2dxdy+\int_{\partial\mathbb{R}^2_{+}}|U(x,0)|^2dx\leq 1$$
in Theorem \ref{addthm2}. This results in a {\bf subcritical} trace Trudinger-Moser inequality in Theorem \ref{addthm1} and
a {\bf critical} trace Trudinger-Moser inequality in Theorem \ref{addthm2}.
\end{remark}

Recently, Ache and Chang \cite{Ache} established the second order sharp trace Sobolev-inequalities \eqref{trace so} on the ball:
\vskip0.1cm

\noindent\textbf{Theorem D.}
Let $u\in C^{\infty}(\mathbb{S}^{n-1})$ with $n>4$. Suppose that $v$ is a smooth extension of $u$ to the unit ball $\mathbb{B}^{n}$ which satisfies
the Neumann boundary condition
$$\frac{\partial v}{\partial n}|_{\mathbb{S}^{n-1}}=-\frac{n-4}{2}u.$$
Then  the following inequality holds
\begin{equation}\label{sobolev on the ball}\begin{split}
&2\frac{\Gamma(\frac{n+2}{2})}{\Gamma(\frac{n-4}{2})}w_{n-1}^{\frac{3}{n-1}}\left(\int_{\mathbb{S}^{n-1}}|u|^{\frac{2(n-1)}{n-4}}d\sigma\right)^{\frac{n-4}{n-1}}\\
&\ \ \leq \int_{\mathbb{B}^{n}}|\Delta v|^2dx+2\int_{\mathbb{S}^{n-1}}|\widetilde{\nabla} u|^2d\sigma+\frac{n(n-4)}{2}\int_{\mathbb{S}^{n-1}}|u|^2\sigma,
\end{split}\end{equation}
where $\Delta v$ is the Laplacian of $v$ with respect to the Euclidean metric, and $\widetilde{\nabla} $ is the  gradient on the sphere.  
\vskip 0.1cm

We refer the reader to  extensions to higher order derivatives  trace Sobolev inequalities on half spaces in \cite{Yang1}, \cite{Yang} and \cite{Case}.

By the Mobius transform, inequality \eqref{sobolev on the ball} is equivalent to the following trace Sobolev inequality on the upper half space $\mathbb{R}^{n}_{+}$.
\vskip0.1cm

\noindent\textbf{Theorem E} (\cite{NNP})
Let $n\geq 5$. Then for any $U\in W^{2,2}(\overline{\mathbb{R}^n_{+}})$ satisfying the Neumann boundary condition $\partial_{y}U(x,y)|_{y=0}=0$, we have
\begin{equation}\label{sobolev on the half space}
2\frac{\Gamma(\frac{n+2}{2})}{\Gamma(\frac{n-4}{2})}w_{n}^{\frac{3}{n-1}}(\int_{\partial\mathbb{R}^n_{+}}|U(x,0)^{\frac{2(n-1)}{n-4}}dx)^{\frac{n-4}{n-1}}
\leq \int_{\mathbb{R}^{n}_{+}}|\Delta U(x,y)|^2dxdy.
\end{equation}
Furthermore, the equality  is achieved by a bi-harmonic extension of a function of the form
$c(1+|x-x_0|^2)^{-\frac{n-4}{2}}$, where $c$ is a constant, $x\in \mathbb{R}^{n-1}$, $x_0$ is some fixed point in $\mathbb{R}^{n-1}$.
\vskip0.1cm

In this paper,
we first establish the borderline case of the inequality \eqref{sobolev on the half space} when $n=4$. We employ the method based on the sharp Fourier rearrangement principle and bi-harmonic extension to establish the sharp trace Adams inequalities on the half space and the existence of their extremals. Both subcritical and critical trace Adams inequalities are proved for functions in $W^{2,2}(\overline{\mathbb{R}^4_{+}})$.  
\vskip0.1cm

Our first result in the second order trace Adams inequality is the following subcritical inequality.

\begin{theorem}\label{thm1}
 Let $0<\beta<12\pi^2$. Then there exists a positive constant $C$ such that for all functions $U\in W^{2,2}(\overline{\mathbb{R}^4_{+}})$ satisfying the Neumann boundary condition $\partial_{y}U(x,y)|_{y=0}=0$ with $\int_{\mathbb{R}^4_{+}}|\Delta U(x,y)|^2dxdy\leq 1$, the following inequality holds:
\begin{equation}\label{sub}
\int_{\partial\mathbb{R}^{4}_{+}}(\exp(\beta|U(x,0)|^{2})-1)dx
\leq C \int_{\partial\mathbb{R}^{4}_{+}}|U(x,0)|^{2}dx.
\end{equation}
  Furthermore, the smallest $C$ in \eqref{sub} can be attained by some function $U$, which is the bi-harmonic extension of
some radial function $u\in W^{\frac{3}{2},2}(\mathbb{R}^3)$. Moreover, the constant $12\pi^2$ is sharp in the sense that the inequality fails if the constant $\beta$ is replaced by any $\beta\geq 12\pi^2$.
\end{theorem}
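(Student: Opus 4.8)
The plan is to reduce the subcritical trace Adams inequality on $\mathbb{R}^4_{+}$ to a one-dimensional (radial) statement about functions on the boundary $\mathbb{R}^3$ via a sharp Fourier rearrangement and the explicit biharmonic extension operator with Neumann data, and then apply a known subcritical fractional Trudinger-Moser/Adams inequality for $W^{\frac{3}{2},2}(\mathbb{R}^3)$ together with a concentration-compactness argument for the extremal. The first step is to identify, for a boundary datum $u$ on $\mathbb{R}^3=\partial\mathbb{R}^4_{+}$, the energy-minimizing biharmonic extension $U$ satisfying $\partial_y U(x,y)|_{y=0}=0$: minimizing $\int_{\mathbb{R}^4_{+}}|\Delta U|^2\,dxdy$ under the trace constraint $U(\cdot,0)=u$ and the Neumann condition. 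Passing to the Fourier variable $\xi\in\mathbb{R}^3$ one solves the ODE $(\partial_y^2-|\xi|^2)^2\widehat U=0$ in $y>0$ with $\widehat U(\xi,0)=\widehat u(\xi)$, $\partial_y\widehat U(\xi,0)=0$, and decay at $y=+\infty$; the solution is $\widehat U(\xi,y)=(1+|\xi|y)e^{-|\xi|y}\widehat u(\xi)$, and a direct computation gives $\int_{\mathbb{R}^4_{+}}|\Delta U|^2\,dxdy = c\int_{\mathbb{R}^3}|\xi|^3|\widehat u(\xi)|^2\,d\xi$ for an explicit constant $c$, i.e. a multiple of the $\dot H^{3/2}(\mathbb{R}^3)$ norm squared. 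This identifies the extension problem with the Gagliardo seminorm of order $3/2$ on $\mathbb{R}^3$, and the constant $12\pi^2$ should drop out of matching the sharp fractional Adams constant for $W^{3/2,2}(\mathbb{R}^3)$ (which by the Adams/fractional-Adams theory is $\beta_0$ with $\dim=3$, $s=3/2$, $n/s=2$) against this $c$.

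The second step is the sharp Fourier rearrangement: replacing $U$ (equivalently $u$) by the biharmonic extension of the Fourier rearrangement $u^\sharp$ of $u$ does not increase the $\dot H^{3/2}$ seminorm (since the multiplier $|\xi|^3$ is radially increasing, the rearrangement inequality of Lenzmann-Sok type applies), while it does not decrease $\int_{\partial\mathbb{R}^4_{+}}(\exp(\beta|u|^2)-1)\,dx$ nor $\int_{\partial\mathbb{R}^4_{+}}|u|^2\,dx$ (both are rearrangement-invariant, being integrals of functions of $|u|$ — here one uses that the Fourier rearrangement is an $L^2$-norm-preserving, equimeasurable-in-a-weak-sense operation, so one works instead with the standard decreasing rearrangement after first using Fourier rearrangement to reduce to radial functions). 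So it suffices to prove \eqref{sub} for $u$ radial, which reduces the problem to a weighted one-dimensional inequality amenable to the symmetrization-free machinery of Lam-Lu. Concretely, after the radial reduction one obtains, for $r=|x|$, a Hardy-type decomposition of the $\dot H^{3/2}$ norm and can run the standard dichotomy: either the function is "already small" at the relevant scale, or it concentrates, and the local exponential integrability is controlled by the sharp constant $\beta<12\pi^2$ with room to spare.

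For the existence of the extremal one follows the by-now-standard route: take a maximizing sequence $U_k$ (WLOG radial, by the first two steps, with $\int|\Delta U_k|^2\le 1$), so $\widehat u_k$ is bounded in $\dot H^{3/2}(\mathbb{R}^3)$; by scaling invariance of both sides of \eqref{sub} (here one must check that the substitution $u(x)\mapsto \lambda^{3/2}u(\lambda x)$ preserves $\int|\Delta U|^2$ and the ratio of the two boundary integrals — this uses the subcritical gap) one may normalize the sequence to prevent vanishing and dilation to infinity. Then $u_k\rightharpoonup u_0$ weakly in $W^{3/2,2}$ with $u_0\not\equiv 0$; the subtle point is to upgrade weak convergence to strong convergence of the exponential integral, which is done by a Brezis-Lieb / Lions-concentration argument: if $u_0$ carried less than full energy, the exponential integral would split and, because $\beta<12\pi^2$ is strictly subcritical, the "tail" part would contribute only a controlled amount, forcing $u_0$ to be the maximizer after all. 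The main obstacle I anticipate is precisely this last compactness step — establishing that no Dirac-type concentration of $|\Delta U_k|^2$ at the boundary destroys the supremum; in the subcritical regime this is genuinely easier than in the critical case of Theorem \ref{addthm2} (one does not need a delicate Carleson-Chang-type lower bound), but it still requires a careful quantitative estimate on $\int_{\partial\mathbb{R}^4_{+}}(\exp(\beta|U_k|^2)-1)\,dx$ over the region where the energy concentrates, using the sharpness gap $12\pi^2-\beta>0$ to absorb the error. A secondary technical point is justifying the biharmonic extension formula and the identity $\int_{\mathbb{R}^4_{+}}|\Delta U|^2\,dxdy = c\|u\|_{\dot H^{3/2}(\mathbb{R}^3)}^2$ rigorously in $W^{2,2}(\overline{\mathbb{R}^4_{+}})$ including the boundary regularity needed for the trace and the Neumann condition, which I would handle by density of smooth compactly supported data and a Plancherel computation.
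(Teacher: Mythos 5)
Your overall route is the paper's own: replace $U$ by the biharmonic extension $V$ of its trace $u=U(\cdot,0)$ with Neumann data $\partial_y V|_{y=0}=0$ (Dirichlet principle gives $\|\Delta V\|_2\le\|\Delta U\|_2$), use the Plancherel identity $\int_{\mathbb{R}^4_+}|\Delta V|^2 = 2\int_{\mathbb{R}^3}|(-\Delta)^{3/4}u|^2$ to land on the subcritical fractional Adams inequality in $W^{3/2,2}(\mathbb{R}^3)$ with sharp constant $6\pi^2$, so that $12\pi^2 = 2\cdot 6\pi^2$ is forced, and then pull the existence of a radial extremal on $\mathbb{R}^3$ back to the half-space through the extension. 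So this is not a genuinely different route, only a different division of labor. The paper simply cites the energy identity (Ng\^{o}--Nguyen--Pham) and the subcritical inequality (Fontana--Morpurgo), whereas you propose to re-derive the Fourier-multiplier computation $\widehat U(\xi,y)=(1+|\xi|y)e^{-|\xi|y}\widehat u(\xi)$ yourself and to re-prove the fractional Adams inequality by Lam--Lu symmetrization-free dichotomy; neither is needed. For compactness of the maximizing sequence the paper's Lemma on $\mathbb{R}^3$ uses Fourier rearrangement to reduce to radial functions, a scaling normalization $v_k(x)=u_k(\|u_k\|_2^{2/3}x)$, compactness of the radial embedding $W^{3/2,2}_{\mathrm{rad}}\hookrightarrow L^p$ ($p>2$), and then an elementary power-series comparison to show the weak limit has unit $\dot H^{3/2}$ norm; your Brezis--Lieb/Lions-concentration approach would also work in the subcritical regime but is heavier than necessary.

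Two points to fix. First, your Fourier-rearrangement discussion is internally inconsistent: the boundary integrals are not rearrangement-invariant under the Fourier rearrangement $u\mapsto u^\sharp=\mathcal F^{-1}\{(\mathcal F u)^*\}$; rather $\|u^\sharp\|_2=\|u\|_2$, $\|u^\sharp\|_q\ge\|u\|_q$ for $q>2$ (hence $\int(e^{\beta|u^\sharp|^2}-1)\ge\int(e^{\beta|u|^2}-1)$ by the even-power expansion), and $\|(-\Delta)^{3/4}u^\sharp\|_2\le\|(-\Delta)^{3/4}u\|_2$; it is precisely this monotonicity of the ratio, not invariance, that lets you pass to radial functions. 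Second, your scaling $u(x)\mapsto\lambda^{3/2}u(\lambda x)$ does \emph{not} preserve the constraint $\int_{\mathbb{R}^4_+}|\Delta U|^2\le 1$: in $\mathbb{R}^3$ the $\dot H^{3/2}$ seminorm is invariant under $u(x)\mapsto u(\lambda x)$ (zero $L^2$-weight), while $\lambda^{3/2}u(\lambda x)$ preserves $\|u\|_2$ and scales the seminorm by $\lambda^{3}$. The correct normalization, as in the paper, is to dilate so that $\|v_k\|_2=1$ while keeping $\|(-\Delta)^{3/4}v_k\|_2=1$, and the ratio in \eqref{sub} is invariant under $u(x)\mapsto u(\lambda x)$ since both the numerator and the $L^2$ norm scale by $\lambda^{-3}$.

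Finally, you do not address the sharpness of $12\pi^2$ at all; the paper handles it by taking the explicit extremizing sequences for the Fontana--Morpurgo inequality at $\beta=6\pi^2$ and transporting them by the biharmonic extension, for which $\int_{\mathbb{R}^4_+}|\Delta U_k|^2=2\int|(-\Delta)^{3/4}u_k|^2\le 1$ while the boundary ratio blows up. This piece should be added to make the proposal complete.
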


Next, the following critical trace Adams inequality also holds.

\begin{theorem}\label{thm2}
There exists a positive constant $C$ such that for all functions $U\in W^{2,2}(\overline{\mathbb{R}^4_{+}})$ satisfying the Neumann boundary condition $\partial_{y}U(x,y)|_{y=0}=0$ with $$\int_{\mathbb{R}^4_{+}}|\Delta U(x,y)|^2dxdy+\int_{\partial\mathbb{R}^4_{+}}|U(x,0)|^2dx\leq 1,$$ the following inequality holds
\begin{equation}\label{cri}
\int_{\partial\mathbb{R}^{4}_{+}}\exp(12\pi^2|U(x,0)|^{2}\big)dx
\leq C.
\end{equation}
Moreover, the constant $12\pi^2$ is sharp in the sense that the inequality fails to hold uniformly for all $U\in W^{2,2}(\overline{\mathbb{R}^4_{+}})$ if the constant $12\pi^2$ is replaced by any $\beta>12\pi^2$.
\end{theorem}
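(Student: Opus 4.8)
The plan is to recast \eqref{cri} as a sharp critical fractional Trudinger--Moser inequality on $\mathbb{R}^{3}$ and then run a Fourier rearrangement argument. Fix the boundary datum $u=U(\cdot,0)\in W^{3/2,2}(\mathbb{R}^{3})$. Among all $U\in W^{2,2}(\overline{\mathbb{R}^{4}_{+}})$ with $U|_{y=0}=u$ and $\partial_{y}U|_{y=0}=0$, the energy $\int_{\mathbb{R}^{4}_{+}}|\Delta U|^{2}\,dx\,dy$ is minimized by the bi-harmonic extension $U_{u}$ of $u$: solving $\Delta^{2}U=0$ frequency by frequency, the decaying solution with these boundary values is $\widehat{U_{u}}(\xi,y)=\widehat{u}(\xi)(1+|\xi|y)e^{-|\xi|y}$, so that $\widehat{\Delta U_{u}}(\xi,y)=-2|\xi|^{2}\widehat{u}(\xi)e^{-|\xi|y}$ and, by Plancherel in $x$,
\[
\int_{\mathbb{R}^{4}_{+}}|\Delta U_{u}|^{2}\,dx\,dy=2\int_{\mathbb{R}^{3}}|\xi|^{3}|\widehat{u}(\xi)|^{2}\,d\xi=2\,\|(-\Delta)^{3/4}u\|_{L^{2}(\mathbb{R}^{3})}^{2}.
\]
Hence it suffices to prove \eqref{cri} for $U=U_{u}$, i.e. that $\int_{\mathbb{R}^{3}}\big(\exp(12\pi^{2}|u|^{2})-1\big)\,dx\le C$ whenever $2\|(-\Delta)^{3/4}u\|_{L^{2}}^{2}+\|u\|_{L^{2}}^{2}\le 1$. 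That $12\pi^{2}$ is the correct exponent is consistent with Theorem~A: the sharp constant of the homogeneous fractional Adams inequality in $\dot W^{3/2,2}(\mathbb{R}^{3})$ equals $6\pi^{2}$ (the value obtained by substituting $n=3$, $m=\tfrac{3}{2}$ into Adams' formula, where the two Gamma factors cancel and $\omega_{2}=4\pi$), and the factor $2$ above turns $6\pi^{2}$ into $12\pi^{2}$.

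Next I would symmetrize on the Fourier side: replace $u$ by the function $u^{\sharp}$ whose Fourier transform is the symmetric-decreasing rearrangement of $|\widehat{u}|$. This does not increase $\|(-\Delta)^{3/4}u\|_{L^{2}}$ or $\|u\|_{L^{2}}$ and, by the sharp Fourier rearrangement principle, does not decrease $\|u\|_{L^{q}}$ for any $q\ge 2$; since $\int_{\mathbb{R}^{3}}\big(\exp(12\pi^{2}|u|^{2})-1\big)\,dx=\sum_{k\ge 1}\frac{(12\pi^{2})^{k}}{k!}\,\|u\|_{L^{2k}}^{2k}$, the left-hand side does not decrease either. So it is enough to bound the exponential integral over $u$ that are radial with nonnegative, radially decreasing $\widehat{u}$.

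For such $u$ the critical bound is extracted from the subcritical Theorem~\ref{thm1} by the scaling mechanism behind the equivalence of critical and subcritical Trudinger--Moser inequalities (an adaptation to the trace setting of \cite{LaZ}, in the spirit of the symmetrization-free method of \cite{LL5,LL2}): writing $a^{2}=2\|(-\Delta)^{3/4}u\|_{L^{2}}^{2}$, $b^{2}=\|u\|_{L^{2}}^{2}$ with $a^{2}+b^{2}\le 1$, when $a$ stays away from $1$ one rescales $u$ so that the homogeneous part of the norm equals $1$ and applies Theorem~\ref{thm1} with a subcritical exponent that absorbs the loss; in the concentration regime $a\to 1$, $b\to 0$ one instead works with the truncation $(|u|-1)_{+}$ on $\{|u|>1\}$, using $b^{2}\le 1-a^{2}$ to keep its homogeneous norm below the critical threshold. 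I expect this last step to be the main obstacle: $(-\Delta)^{3/4}$ is nonlocal, so $(|u|-1)_{+}$ does not split the Gagliardo seminorm. The way around it is to return to the extension $U_{u}$ on $\mathbb{R}^{4}_{+}$, truncate in the variable $y$, estimate the Dirichlet energy of the truncated extension via the decay $e^{-|\xi|y}$ found above, and combine this with a radial Strauss-type pointwise bound for $U_{u}(\cdot,0)$ near the concentration point.

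It remains to prove sharpness. For $\beta>12\pi^{2}$ I would take a Moser--Adams concentrating family: radial $u_{\varepsilon}\in W^{3/2,2}(\mathbb{R}^{3})$ with $2\|(-\Delta)^{3/4}u_{\varepsilon}\|_{L^{2}}^{2}+\|u_{\varepsilon}\|_{L^{2}}^{2}\le 1$, $\|u_{\varepsilon}\|_{L^{2}}\to 0$, tuned so that $12\pi^{2}\,u_{\varepsilon}(x)^{2}=3\log(1/\varepsilon)$ on the ball $B_{\varepsilon}\subset\mathbb{R}^{3}$, and let $U_{\varepsilon}$ be its bi-harmonic extension (so $U_{\varepsilon}(\cdot,0)=u_{\varepsilon}$ satisfies the Neumann condition). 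Then $\int_{\partial\mathbb{R}^{4}_{+}}\exp(\beta|U_{\varepsilon}(x,0)|^{2})\,dx\gtrsim|B_{\varepsilon}|\,\varepsilon^{-3\beta/(12\pi^{2})}\gtrsim\varepsilon^{3(1-\beta/(12\pi^{2}))}\to\infty$ as $\varepsilon\to 0$, so \eqref{cri} cannot hold uniformly over $W^{2,2}(\overline{\mathbb{R}^{4}_{+}})$ with $12\pi^{2}$ replaced by any $\beta>12\pi^{2}$; since the same argument shows $12\pi^{2}$ is the largest admissible exponent, this completes the proof.
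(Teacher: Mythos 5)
Your reduction to the boundary is the same as the paper's: the bi-harmonic extension minimizes $\int_{\mathbb{R}^4_+}|\Delta U|^2$ among extensions of $U(\cdot,0)$ with the given Neumann data, and the Plancherel computation $\widehat{U_u}(\xi,y)=\widehat{u}(\xi)(1+|\xi|y)e^{-|\xi|y}$, $\widehat{\Delta U_u}=-2|\xi|^2\widehat u\,e^{-|\xi|y}$, hence $\int|\Delta U_u|^2=2\|(-\Delta)^{3/4}u\|_2^2$, is a clean derivation of exactly the identity the paper takes from Lemma~\ref{lem3}. After that, the paper's proof is essentially finished: it invokes the \emph{known} critical fractional Adams inequality in $W^{3/2,2}(\mathbb{R}^3)$ (the analogue of the $W^{1/2,2}(\mathbb{R})$ result of Iula--Maalaoui--Martinazzi cited in Theorem~\ref{addthm2}, and available e.g.\ via the critical/subcritical equivalence of \cite{LaZ} applied to Lemma~\ref{lem1}) under the transferred constraint $2\|(-\Delta)^{3/4}u\|_2^2+\|u\|_2^2\le 1$.

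This is exactly where your proposal has a genuine gap. You try to re-derive the critical inequality from the subcritical Lemma~\ref{lem1} by a scaling/truncation argument, but you yourself flag that the standard truncation $(|u|-1)_+$ does not decouple the Gagliardo seminorm because $(-\Delta)^{3/4}$ is nonlocal, and the alternative you outline (truncate the bi-harmonic extension in $y$, use the $e^{-|\xi|y}$ decay, combine with a Strauss-type bound) is left as a sketch with no estimates; as written it does not show that the truncated function's homogeneous $W^{3/2,2}$ norm stays below the critical threshold, which is the whole content of the concentration regime $a\to 1$. So the argument that is supposed to turn subcritical into critical is not actually established. The Fourier rearrangement step you insert before this (replace $u$ by $u^\sharp$, using $\|u^\sharp\|_{2k}\ge\|u\|_{2k}$) is correct but does not help close the gap, since the obstruction is not the lack of radial symmetry. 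The fix is straightforward: instead of re-proving the critical bound, cite the critical fractional Adams inequality in $W^{3/2,2}(\mathbb{R}^3)$ as the paper does, or apply the abstract equivalence of \cite{LaZ} to Lemma~\ref{lem1}, which sidesteps truncation of the nonlocal operator entirely. Your normalization bookkeeping (the factor $2$ turning $\beta(3,3/2)=6\pi^2$ into $12\pi^2$, absorbed by a dilation of $u$) and your sharpness sketch with the Moser--Adams concentrating family are fine; note also that the exponential in \eqref{cri} should read $\exp(12\pi^2|U|^2)-1$, as the paper's own proof uses.
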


The second order Adams inequality with the exact growth was established    in $W^{2,2}(\mathbb{R}^4)$ in \cite{MS} and then was extended to $W^{2,n}(\mathbb{R}^n)$ in \cite{LTZ} (see also the first order case in \cite{IMN}, and  \cite{MS3} and singular inequalities and under different norms with exact growth in \cite{LamLu-exact} and \cite{LLZhang}). A natural question is whether there exists trace Adams inequality with the exact growth in $W^{2,2}(\mathbb{R}^{4}_{+})$. In this paper, we establish the following
\begin{theorem}\label{adam-exact}
There exists a positive constant $C$ such that for all functions $U\in W^{2,2}(\overline{\mathbb{R}^4_{+}})$ satisfying the Neumann boundary condition $\partial_{y}U(x,y)|_{y=0}=0$ with $\int_{\mathbb{R}^4_{+}}|\Delta U(x,y)|^2dxdy=1$, the following inequality holds
\begin{equation}\label{addadmexa}
\int_{\partial\mathbb{R}^{4}_{+}}\frac{(\exp(12\pi^2|U(x,0)|^{2})-1)}{(1+|U(x,0)|)^2}dx
\leq C \int_{\partial\mathbb{R}^{4}_{+}}|U(x,0)|^{2}dx.
\end{equation}
Moreover, this inequality fails to hold uniformly for all $U\in W^{2,2}(\overline{\mathbb{R}^4_{+}})$ if the power $2$ in the denominator of the left hand side is replaced by any $p<2$.

\end{theorem}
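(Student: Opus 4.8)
The plan is to deduce the exact-growth inequality \eqref{addadmexa} from the critical trace Adams inequality \eqref{cri} of Theorem \ref{thm2}, together with the subcritical inequality \eqref{sub} of Theorem \ref{thm1}, by a standard but careful dichotomy argument on the sublevel/superlevel sets of $|U(x,0)|$. Fix $U\in W^{2,2}(\overline{\mathbb{R}^4_{+}})$ with $\int_{\mathbb{R}^4_{+}}|\Delta U|^2\,dxdy=1$ and write $u=U(\cdot,0)\in W^{\frac{3}{2},2}(\mathbb{R}^3)$. First I would split the boundary integral over the region $A=\{x\in\mathbb{R}^3:\,|u(x)|\le 1\}$ and its complement $A^c$. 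On $A$ the denominator $(1+|u|)^2$ is bounded below by $1$ and above by $4$, and $|u|\le 1$, so $\exp(12\pi^2|u|^2)-1\le C|u|^2$ pointwise there by elementary one-variable calculus; hence the contribution of $A$ is already bounded by $C\int_{\partial\mathbb{R}^4_{+}}|u|^2\,dx$, as required.

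The substantive part is the integral over $A^c=\{|u|>1\}$. The idea is to renormalize: since $\int_{\mathbb{R}^4_{+}}|\Delta U|^2=1$ does not beat the critical threshold $\int|\Delta U|^2+\int_{\partial}|u|^2\le 1$, one cannot apply \eqref{cri} directly; instead one rescales the boundary data. Following the MS/LTZ scheme, I would introduce $v(x)=\big(1-\tfrac{\varepsilon^2}{\|u\|_{L^2(\partial\mathbb{R}^4_+)}^2}\big)^{1/2}u(x)$ (or, more robustly, subtract off a fixed bounded piece so that the $L^2$-norm of the remainder is under control), so that $\int_{\mathbb{R}^4_+}|\Delta V|^2+\int_{\partial}|v|^2\le 1$ where $V$ is the biharmonic extension of $v$; then \eqref{cri} gives $\int_{\partial}\exp(12\pi^2|v|^2)\,dx\le C$. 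The discrepancy $\exp(12\pi^2|u|^2)-\exp(12\pi^2|v|^2)$ must then be absorbed using the decay factor $(1+|u|)^{-2}$; on $A^c$ one has $|u|^2-|v|^2\le C\|u\|_{L^2}^{-2}\,|u|^2$, and since by the subcritical inequality \eqref{sub} (applied with any $\beta\in(0,12\pi^2)$) we already control $\int_{\partial}(\exp(\beta|u|^2)-1)\,dx\le C\|u\|_{L^2}^2$, the borderline gain in the exponent on the set where $|u|$ is large is exactly compensated by the quadratic denominator. Concretely, on $A^c$ we estimate
\begin{equation*}
\frac{\exp(12\pi^2|u|^2)-1}{(1+|u|)^2}\le \frac{\exp(12\pi^2|v|^2)}{(1+|u|)^2}\cdot\exp\!\big(12\pi^2(|u|^2-|v|^2)\big),
\end{equation*}
and then split according to whether $|u(x)|$ is below or above a large threshold $M$: for $|u|\le M$ the last exponential is bounded by a constant and one invokes \eqref{cri}; for $|u|>M$ one uses Hölder with a small exponent together with \eqref{sub} and the factor $(1+|u|)^{-2}$, choosing $M$ large depending only on $\|u\|_{L^2}$.

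There is a case distinction I would handle first: if $\|u\|_{L^2(\partial\mathbb{R}^4_+)}$ is bounded below by a fixed constant $\delta_0$, the rescaling above loses only a fixed fraction and everything goes through cleanly; if $\|u\|_{L^2}<\delta_0$ is small, then $A^c=\{|u|>1\}$ has small measure and one argues directly, using $\int_{\partial}\exp(12\pi^2|u|^2)$ over $A^c$ bounded via the subcritical inequality at a slightly smaller exponent plus Hölder, with the surplus $\exp((12\pi^2-\beta)|u|^2)$ controlled on $A^c$ by... again the denominator — this is where the power exactly $2$ is forced. I expect the main obstacle to be making the rescaling quantitative enough that the loss in the exponent is genuinely dominated by $(1+|u|)^{-2}$ and not by $(1+|u|)^{-p}$ for some $p<2$; getting this sharp requires tracking that $\exp(12\pi^2(|u|^2-|v|^2))\lesssim (1+|u|)^{c/\|u\|_{L^2}^2}$, so on $A^c$ where $|u|>1$ one needs $c/\|u\|_{L^2}^2\le 2$ after the rescaling is tuned. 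Finally, the sharpness of the exponent $p=2$ is proved by testing \eqref{addadmexa} against the extremal-type sequence (the biharmonic extensions of the Moser-type functions $u_k$ concentrating at a boundary point, truncated so that $\int|\Delta U_k|^2=1$), for which $\int_{\partial}\exp(12\pi^2|u_k|^2)/(1+|u_k|)^p$ over the concentration ball is computed to diverge like $r_k^{\,2-p}$ as $r_k\to 0$ whenever $p<2$, while $\int_{\partial}|u_k|^2$ stays bounded — the same computation that appears in the Adams/MS exact-growth literature, transplanted to the half-space trace setting via the Fourier-rearrangement and biharmonic-extension identities established earlier in the paper.
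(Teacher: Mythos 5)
Your approach is fundamentally different from the paper's and contains a fatal gap. The paper proves Theorem~\ref{adam-exact} in one short step: using the Dirichlet principle for biharmonic functions together with Lemma~\ref{lem3}, it reduces the trace estimate to the fractional Adams inequality with exact growth in $W^{3/2,2}(\mathbb{R}^3)$, which is Theorem~\ref{fratru} and is proved independently in Section~6 via the Fourier rearrangement (to reduce to radial functions), Beckner's fractional Hardy--Rellich inequality (Lemma~\ref{rad}), and the one-dimensional change of variables $w(s)=c\,u(s^{2/n})$. No dichotomy on sublevel sets of $u$ and no appeal to the critical inequality \eqref{cri} or the subcritical inequality \eqref{sub} appears anywhere in that argument.

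The specific step in your plan that fails is the absorption of the rescaling discrepancy. If $v=cu$ with $c^2=(1+\|u\|_2^2)^{-1}$ so that the biharmonic extension of $v$ satisfies the normalization for \eqref{cri}, then $|u|^2-|v|^2=(1-c^2)|u|^2$ is a fixed positive multiple of $|u|^2$, so $\exp\!\bigl(12\pi^2(|u|^2-|v|^2)\bigr)$ grows like a Gaussian in $|u|$, not like a power of $|u|$. Your claimed bound $\exp\!\bigl(12\pi^2(|u|^2-|v|^2)\bigr)\lesssim(1+|u|)^{c/\|u\|_2^2}$ is therefore false: the left side is exponential in $|u|^2$, the right side polynomial in $|u|$, and they are incomparable on $\{|u|>M\}$ no matter how $M$ is tuned. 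H\"older interpolation between \eqref{cri} and \eqref{sub} does not rescue this either, since any positive fraction of the exponent $12\pi^2|u|^2$ placed against the weight $(1+|u|)^{-2}$ still diverges pointwise. This reflects a known fact: the exact-growth inequality is strictly stronger than the critical and subcritical inequalities taken together (cf.\ Ibrahim--Masmoudi--Nakanishi, Masmoudi--Sani), so it cannot be a formal consequence of them; a genuinely new ingredient such as the Hardy--Rellich reduction to a 1D Moser problem, as in Section~6 of the paper, is required. Your sharpness discussion is in the right spirit, but the actual lower bound in the paper comes from the Fontana--Morpurgo test functions and has the form $r^{-n}\bigl(\log\tfrac{1}{(\varepsilon r)^n}\bigr)^{1-p/2}$, which diverges for $p<2$ as $\varepsilon\to 0$.
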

\begin{remark}
The proof of Theorem \ref{adam-exact} is based on bi-harmonic extension and fractional Adams inequalities with the exact growth in $W^{\frac{n}{2},2}(\mathbb{R}^n)$ for $n=3$. Adams inequalities with the exact growth in $W^{\frac{n}{2},2}(\mathbb{R}^n)$ for even $n$ has been established  in \cite{MS1}(see also \cite{N}). However, the validity of this inequality for odd $n$ still remain open. Using 
the fractional Hardy-Rellich inequalities established  by Beckner in \cite{Be1} together with 
the method of the reduction of orders (see also  \cite{N}), we present a simple proof for the fractional Trudinger-Moser-Adams inequality with the exact growth in $W^{\frac{n}{2},2}(\mathbb{R}^n)$ for all $n\geq 2$.
\end{remark}

\begin{theorem}\label{fratru}
There exists a positive constant $C$ such that for all functions $u\in W^{\frac{n}{2},2}(\mathbb{R}^n)$ with $\int_{\mathbb{R}^n}|(-\Delta)^{\frac{n}{4}}u|^2dx=1$, the following inequality holds
\begin{equation}\label{addadmexa}
\int_{\mathbb{R}^{n}}\frac{(\exp(\beta(n,\frac{n}{2})|u(x)|^{2})-1)}{(1+|u(x)|)^2}dx
\leq C \int_{\mathbb{R}^{n}}|u(x)|^{2}dx.
\end{equation}
Moreover, this inequality fails if the power $2$ in the denominator is replaced by any $p<2$.

\end{theorem}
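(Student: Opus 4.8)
For $n$ even the inequality is precisely the content of \cite{MS1} (see also \cite{N}), so the real issue is to devise an argument valid for every $n\geq 2$, in particular for the still-open odd case. The natural starting move is to set $f:=(-\Delta)^{n/4}u$, so that $\|f\|_{L^2(\mathbb{R}^n)}=1$ and $u=I_{n/2}f$, the Riesz potential of order $n/2$, whose convolution kernel is a fixed constant times $|x|^{-n/2}$. Since $|x|^{-n/2}$ lies in the Lorentz space $L^{2,\infty}(\mathbb{R}^n)$, this is exactly the borderline configuration responsible for Adams-type exponential integrability, and $\beta(n,\tfrac n2)$ is the constant that makes the Gaussian integral produced by this kernel just barely converge. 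Because symmetric decreasing rearrangement does not interact well with fractional powers of $(-\Delta)$, one cannot reduce to radial monotone competitors as in Adams' original proof; this is where the \emph{reduction of orders} enters. I would peel off one half-integer at a time, writing $(-\Delta)^{n/4}=(-\Delta)^{1/2}\circ(-\Delta)^{(n-2)/4}$, and at each stage use Beckner's sharp fractional Hardy--Rellich (Stein--Weiss) inequalities from \cite{Be1} to compare the remaining fractional energy with weighted $L^2$-norms of the intermediate Riesz potentials of $f$, crucially \emph{without any loss in the constant}. After finitely many steps this organizes the estimate of $u$ into a one-dimensional problem on the radial profile, in which $\beta(n,\tfrac n2)$ appears as the critical value; the point of routing everything through the sharp Hardy--Rellich constants is precisely to guarantee that no smaller constant slips in.

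Granting this reduction, I would establish \eqref{addadmexa} by the exact-growth scheme of Masmoudi--Sani \cite{MS} and Ibrahim--Masmoudi--Nakanishi \cite{IMN}. Split $\mathbb{R}^n=\{|u|\le1\}\cup\{|u|>1\}$. On $\{|u|\le1\}$ one has $\frac{\exp(\beta(n,n/2)|u|^2)-1}{(1+|u|)^2}\lesssim|u|^2$, which is dominated directly by the right-hand side. On $\{|u|>1\}$, for each integer level $k$ I would push the representation $u=I_{n/2}f$ through the reduction above to bound $\mathrm{meas}\{|u|>k\}\lesssim \|u\|_{L^2}^2\,k^2\,\exp(-\beta(n,n/2)k^2)$, up to harmless polynomial corrections; the weight $(1+|u|)^{-2}$ is then exactly the factor needed to absorb the $k^2$ and make the resulting series $\sum_k$ converge uniformly in $f$. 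The impossibility of enlarging $\beta(n,\tfrac n2)$ is inherited from the sharpness of the ordinary (non-exact-growth) fractional Adams inequality, so no separate argument is needed for that assertion.

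For the optimality of the exponent $2$ in the denominator I would test against a truncated Moser--Adams family adapted to the kernel $|x|^{-n/2}$: take $u_\varepsilon$ to be, up to normalization in $\dot H^{n/2}$, a radial profile equal to a constant of size $\sim(\log\tfrac1\varepsilon)^{1/2}$ on $B_\varepsilon$ and decaying like the Riesz kernel, cut off at a fixed scale so that $\|u_\varepsilon\|_{L^2}$ stays controlled. On $B_\varepsilon$ one has $\beta(n,\tfrac n2)|u_\varepsilon|^2\sim n\log\tfrac1\varepsilon$, hence with $(1+|u|)^{-p}$ in place of $(1+|u|)^{-2}$ the contribution of $B_\varepsilon$ to the left-hand side is of order $(\log\tfrac1\varepsilon)^{-p/2}$, whereas a direct computation of $\|u_\varepsilon\|_{L^2}^2$ shows the right-hand side decays one power of $\log$ faster; the two sides balance exactly at $p=2$, so the inequality must fail for every $p<2$.

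The main obstacle is the reduction-of-orders step: one must thread Beckner's Hardy--Rellich constants through the iterative (dyadic) decomposition so that the exponential constant emerging at the end is \emph{exactly} $\beta(n,\tfrac n2)$ rather than something strictly smaller. Once the reduction is in place with the sharp constant, the rest is a by-now-standard adaptation of the exact-growth machinery, and the test-function bookkeeping for the sharpness of the denominator is routine.
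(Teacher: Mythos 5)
The central gap is that your plan has no mechanism to reduce to radial competitors, and without one the Hardy--Rellich inequality you want to invoke is unavailable. You correctly observe that Schwarz symmetrization does not commute with $(-\Delta)^{n/4}$, but the remedy is not the iterative ``reduction of orders'' you propose --- it is \emph{Fourier rearrangement}. Setting $u^{\sharp}=\mathcal{F}^{-1}\{(\mathcal{F}u)^{\ast}\}$, one has $\|(-\Delta)^{n/4}u^{\sharp}\|_{2}\le\|(-\Delta)^{n/4}u\|_{2}$, $\|u^{\sharp}\|_{2}=\|u\|_{2}$, and, crucially, a comparison lemma (Lemma~\ref{cont}, adapted from \cite{N}) showing that the exact-growth integral of $u$ is controlled, up to a term $\lesssim\|u\|_{2}^{2}$, by that of the radial decreasing $u^{\sharp}$. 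This is the indispensable first move and your proposal omits it entirely. Once $u$ is radial, Beckner's inequality (Lemma~\ref{rad}) is applied \emph{once}, not iteratively: $\|(-\Delta)^{n/4}u\|_{2}^{2}\ge 2^{n-2}\Gamma^{2}(n/2)\int\frac{|\nabla u|^{2}}{|x|^{n-2}}\,dx$, and a single change of variables $w(s)=\bigl(\tfrac{n}{2}A_{n}\bigr)^{1/2}u(s^{2/n})$ converts this into $\int_{\mathbb{R}^{2}}|\nabla w|^{2}\le 1$ for a radial $w$ on $\mathbb{R}^{2}$ with $\|w\|_{L^{2}(\mathbb{R}^{2})}^{2}\lesssim\|u\|_{L^{2}(\mathbb{R}^{n})}^{2}$, while the integral to be estimated becomes a fixed multiple of $\int_{\mathbb{R}^{2}}\frac{\exp(4\pi w^{2})-1}{1+|w|^{2}}\,dx$. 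The conclusion then follows by \emph{quoting} the known planar exact-growth inequality \cite{IMN}. There is no dyadic peeling of $(-\Delta)^{1/2}$ factors and no need to re-derive the level-set bound $\mathrm{meas}\{|u|>k\}\lesssim k^{2}e^{-\beta(n,n/2)k^{2}}\|u\|_{2}^{2}$ from scratch; the constant $\beta(n,\tfrac{n}{2})$ matches $4\pi$ by a short explicit computation, not by threading sharp constants through a cascade of Hardy--Rellich applications --- a step you yourself flag as the main obstacle and leave unresolved.

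Your optimality sketch is on the right track but too schematic: one cannot simply ``prescribe'' a radial bump of height $(\log\tfrac{1}{\varepsilon})^{1/2}$ in $\dot H^{n/2}$. The paper uses the truncated power $\phi_{\varepsilon,r}$ equal to $c_{n/2}|x|^{-n/2}$ on the annulus $\varepsilon r<|x|<r$ and zero elsewhere, subtracts the projection onto polynomials of degree $\le n-1$, and applies the precise estimates from \cite{Fon} for $\|\widetilde{\phi_{\varepsilon,r}}\|_{2}^{2}$, $\|I_{n/2}\widetilde{\phi_{\varepsilon,r}}\|_{2}^{2}$ and the pointwise lower bound of $I_{n/2}\widetilde{\phi_{\varepsilon,r}}$ on $B_{\varepsilon r/2}$; the ratio is then shown to grow like $r^{-n}\bigl(\log\tfrac{1}{(\varepsilon r)^{n}}\bigr)^{1-p/2}$, which diverges as $\varepsilon\to 0$ for $p<2$. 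Your heuristic gets the exponent right, but that computation has to be carried out on this specific test family rather than asserted.
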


Furthermore, we also establish the trace Adams inequalities on $W^{m,2}(\overline{\mathbb{R}^{2m}_{+}})$ $(m> 2)$. The proof is based on the following technical lemma whose proof will be given in Section \ref{S5}.

\begin{lemma}\label{lem1.1}
Given a function $u\in W^{m-\frac{1}{2},2}(\mathbb{R}^{2m-1})$.  If we assume that the function $U\in W^{m,2}(\overline{\mathbb{R}^{2m}_+})$
 satisfying
 $$(-\Delta)^{m}U=0$$
 on the upper half space $\mathbb{R}^{2m}_{+}$
and the Neumann boundary conditions
\begin{equation}\label{boudc1}\left.\Delta^{k}U(x,y)\right|_{y=0}=\frac{\Gamma(m)\Gamma(m-\frac{1}{2}-k)}{\Gamma(m-1-k)\Gamma(m-\frac{1}{2})}\Delta^{k}_{x}u(x),\;\;0\leq k\leq\left[\frac{m-1}{2}\right]\end{equation}
and
\begin{equation}\label{boudc2}\left.\partial_{y}\Delta^{k}U(x,y)\right|_{y=0}=0,\;\;0\leq k\leq\left[\frac{m-2}{2}\right],\end{equation}
then we have the following identity
$$\int_{\mathbb{R}^{2m}_{+}}|\nabla^{m}U(x,y)|^{2}dxdy=\frac{\Gamma(m)\Gamma(\frac{1}{2})}{\Gamma(m-\frac{1}{2})}
\int_{\mathbb{R}^{2m-1}}|(-\Delta)^{\frac{2m-1}{4}}u|^{2}dx.$$
\end{lemma}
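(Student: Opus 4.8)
\textbf{Proof proposal for Lemma \ref{lem1.1}.}

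The plan is to pass to the Fourier transform in the tangential variable $x \in \mathbb{R}^{2m-1}$ and reduce the polyharmonic boundary value problem to a family of ODEs in $y$, one for each frequency $\xi$. Writing $\widehat{U}(\xi, y)$ for the partial Fourier transform, the equation $(-\Delta)^m U = 0$ becomes $\left(-\partial_y^2 + |\xi|^2\right)^m \widehat{U}(\xi,y) = 0$, whose solutions decaying as $y \to \infty$ form the $m$-dimensional space spanned by $y^j e^{-|\xi| y}$, $0 \le j \le m-1$. The strategy is to show that the unique such solution satisfying the prescribed Neumann-type conditions \eqref{boudc1}–\eqref{boudc2} is exactly $\widehat{U}(\xi, y) = P(|\xi| y)\, e^{-|\xi| y}\, \widehat{u}(\xi)$ for an explicit universal polynomial $P$ of degree $m-1$ with $P(0) = 1$; the conditions $\left.\Delta^k U\right|_{y=0}$ and $\left.\partial_y \Delta^k U\right|_{y=0}$ translate, via $\Delta^k U \mapsto (-\partial_y^2 + |\xi|^2)^k \widehat U$, into a triangular linear system for the coefficients of $P$ that pins it down. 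I expect $P$ to be essentially a Bessel-type / confluent hypergeometric polynomial, and the constants $\frac{\Gamma(m)\Gamma(m-\frac12-k)}{\Gamma(m-1-k)\Gamma(m-\frac12)}$ in \eqref{boudc1} are precisely engineered so that this works out cleanly.

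Next I would compute the Dirichlet-type energy. Using the identity $\int_{\mathbb{R}^{2m}_+} |\nabla^m U|^2 = \int_{\mathbb{R}^{2m}_+} |\Delta^{m/2} U|^2$ when $m$ is even, and more generally expressing $\|\nabla^m U\|_2^2$ through integration by parts against $(-\Delta)^m U = 0$ plus boundary terms (this is where the Neumann conditions \eqref{boudc2} will make the unwanted boundary contributions vanish), one reduces to a boundary integral. Concretely, repeated integration by parts should give
\begin{equation*}
\int_{\mathbb{R}^{2m}_+} |\nabla^m U(x,y)|^2\, dx\, dy = -\int_{\partial \mathbb{R}^{2m}_+} \sum_{k} c_k\, \partial_y \Delta^{m-1-k} U \cdot \Delta^k U \Big|_{y=0}\, dx
\end{equation*}
for suitable combinatorial constants $c_k$, and after invoking the boundary conditions only the term involving $\left.\Delta^{[(m-1)/2]} U\right|_{y=0}$ and its normal derivative survives. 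On the Fourier side this becomes $\int_{\mathbb{R}^{2m-1}} q(|\xi|)\, |\widehat u(\xi)|^2\, d\xi$ where $q(|\xi|)$ is extracted from the ODE solution; the claim is that $q(|\xi|) = \frac{\Gamma(m)\Gamma(1/2)}{\Gamma(m-1/2)} |\xi|^{2m-1}$, which is exactly $\frac{\Gamma(m)\Gamma(1/2)}{\Gamma(m-1/2)}$ times the Fourier multiplier of $|(-\Delta)^{(2m-1)/4}|^2 = (-\Delta)^{(2m-1)/2}$. By Plancherel this yields the stated identity.

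The main obstacle will be the bookkeeping in identifying the polynomial $P$ and verifying that the coefficient $\frac{\Gamma(m)\Gamma(1/2)}{\Gamma(m-1/2)}$ comes out correctly — i.e., carrying out the reduction of the polyharmonic extension problem to a scalar problem in a way that keeps the Gamma-function constants transparent. A cleaner route, which I would actually pursue, is the \emph{reduction of orders} alluded to in the remark after Theorem \ref{fratru}: show that $V := -\Delta U$ (or an appropriate iterate) satisfies a lower-order polyharmonic equation with analogous Neumann data, set up an induction on $m$, and reduce to the base cases $m=1$ (the classical harmonic extension, giving the Dirichlet-to-Neumann operator $(-\Delta)^{1/2}$) and $m=2$ (the bi-harmonic case underlying Theorems \ref{thm1}–\ref{thm2}). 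The inductive step then only requires tracking how one factor of $(-\Delta)^{1/2}$ and one ratio of Gamma values is produced at each stage, and the telescoping product of these ratios collapses to $\frac{\Gamma(m)\Gamma(1/2)}{\Gamma(m-1/2)}$. Verifying that the boundary conditions \eqref{boudc1}–\eqref{boudc2} are exactly the ones preserved under this reduction — in particular that the index shifts $k \mapsto k$ with the new top order $[(m-2)/2]$, $[(m-3)/2]$ match up — is the delicate point and is where I would spend most of the care.
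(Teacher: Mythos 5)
Your first plan (partial Fourier transform in $x$, reduce to the ODE $\left(-\partial_y^2+|\xi|^2\right)^m\widehat U=0$, identify the decaying solution $\widehat U(\xi,y)=P(|\xi|y)e^{-|\xi|y}\widehat u(\xi)$, then compute the energy via Plancherel) is a legitimate, self-contained route, but it is not the route the paper takes. The paper's proof is much shorter and hinges on an input you do not mention: it quotes from Yang's work the explicit generalized Poisson kernel
\[
U(x,y)=\pi^{-m+\frac12}\frac{\Gamma(2m-1)}{\Gamma(m-\frac12)}\int_{\mathbb{R}^{2m-1}}\frac{y^{2m-1}}{(|x-\xi|^2+y^2)^{2m-1}}\,u(\xi)\,d\xi,
\]
together with a third boundary identity not listed in \eqref{boudc1}--\eqref{boudc2}, namely
\[
\partial_y\Delta^{m-1}U\big|_{y=0}=(-1)^{m-1}\frac{\Gamma(m)\Gamma(\frac12)}{\Gamma(m-\frac12)}(-\Delta_x)^{m-\frac12}u,
\]
and then the proof is just two applications of Green's formula: $0=\int_{\mathbb{R}^{2m}_+}U\Delta^m U$ is integrated by parts once, producing $\int\Delta U\,\Delta^{m-1}U$ plus the boundary pairing of $U$ with $\partial_y\Delta^{m-1}U$ (the cross term vanishes by $\partial_y U|_{y=0}=0$), and the bulk term is identified with $(-1)^{m-2}\int|\nabla^m U|^2$ by further integrations by parts in which \eqref{boudc1}--\eqref{boudc2} kill the remaining boundary contributions. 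In other words, the paper buys brevity by outsourcing the hard Fourier computation to a reference, while your proposal reproduces that computation. Both are valid, but yours is noticeably more work, and as written it is only a sketch: you never actually solve the triangular system for $P$, never verify that the Gamma ratios in \eqref{boudc1} are the ones that make the system consistent, and never extract the multiplier $q(|\xi|)=\frac{\Gamma(m)\Gamma(1/2)}{\Gamma(m-1/2)}|\xi|^{2m-1}$. Since producing the constant is the entire content of the lemma, these steps cannot be waved at.

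Your second suggestion, the reduction-of-orders induction via $V:=-\Delta U$, is more problematic and I would not endorse it without substantial repair. Setting $V=-\Delta U$ does give $\Delta^{m-1}V=0$, but the energy functional you are tracking does not telescope: $\nabla^{m-1}V=-\nabla^{m-1}\Delta U$ is an $(m+1)$-st derivative of $U$, not an $m$-th, so $\int|\nabla^{m-1}V|^2$ is not $\int|\nabla^m U|^2$ and the inductive hypothesis for $m-1$ applied to $V$ does not return the quantity you want. Moreover the boundary data inherited by $V$ is $\Delta_x u$ rather than $u$, with a constant $-\frac{\Gamma(m)\Gamma(m-3/2)}{\Gamma(m-2)\Gamma(m-1/2)}$ that does not match the normalization in \eqref{boudc1} with $m$ replaced by $m-1$, so the boundary conditions are not literally "preserved under the reduction" as you hope; some renormalization scheme would have to be invented and verified. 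The Fourier ODE route is the one to actually carry through if you want a proof independent of Yang's kernel formula.
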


Once we have proved  Lemma \ref{lem1.1}, we can establish the following both subcritical and critical trace Adams inequalities on $W^{m,2}(\overline{\mathbb{R}^{2m}_{+}})$ $(m> 2)$ by a similar argument as for Theorems \ref{thm1}, \ref{thm2} and \ref{adam-exact} on $W^{2,2}(\overline{\mathbb{R}^{4}_{+}})$.  

\begin{theorem}\label{thm1.1}
 Let $0<\beta<\frac{\Gamma(m)\Gamma(\frac{1}{2})}{\Gamma(m-\frac{1}{2})}\beta(2m-1,\frac{2m-1}{2})=\widetilde{\beta_m}$. Then there exists a positive constant $C$ such that for all functions $U\in W^{m,2}(\overline{\mathbb{R}^{2m}_{+}})$ satisfying  the Neumann boundary conditions \eqref{boudc1} and  \eqref{boudc2} with $\int_{\mathbb{R}^{2m}_{+}}|\nabla^{m}U|^{2}dxdy\leq 1$, the following inequality holds:
\begin{equation}\label{sub-1}
\int_{\partial\mathbb{R}^{2m}_{+}}(\exp(\beta|U(x,0)|^{2})-1)dx
\leq C \int_{\partial\mathbb{R}^{2m}_{+}}|U(x,0)|^{2}dx.
\end{equation}
Moreover, the constant $\widetilde{\beta_m}$ is sharp in the sense that the inequality fails to hold uniformly if the constant $\beta$ is replaced by any $\beta\geq \widetilde{\beta_m}$. Furthermore, equality in \eqref{sub} holds if and only if $U$ is a $m$-harmonic extension of
some radial function $u$ in $W^{\frac{2m-1}{2},2}(\mathbb{R}^{2m-1})$.
\end{theorem}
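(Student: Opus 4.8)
The plan is to deduce Theorem~\ref{thm1.1} from the subcritical fractional Trudinger--Moser--Adams inequality in $W^{\frac{2m-1}{2},2}(\mathbb{R}^{2m-1})$, with Lemma~\ref{lem1.1} as the bridge, in exact analogy with the proof of Theorem~\ref{thm1} (the case $m=2$). Since the left-hand side of \eqref{sub-1} depends on $U$ only through the boundary value $U(\cdot,0)$, the first step is to control a Sobolev norm of $U(\cdot,0)$ by the energy $\int_{\mathbb{R}^{2m}_+}|\nabla^{m}U|^{2}\,dx\,dy$. To this end I would show that, among all $U\in W^{m,2}(\overline{\mathbb{R}^{2m}_+})$ satisfying the boundary conditions \eqref{boudc1}, \eqref{boudc2} for a fixed $u$, the energy $\int|\nabla^{m}U|^{2}$ is minimized by the unique $m$-harmonic extension $V=V_u$ obeying \eqref{boudc1}, \eqref{boudc2}. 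Writing $U=V+W$, the difference satisfies the homogeneous conditions $\Delta^{k}W|_{y=0}=0$ for $0\le k\le[\frac{m-1}{2}]$ and $\partial_{y}\Delta^{k}W|_{y=0}=0$ for $0\le k\le[\frac{m-2}{2}]$; carrying out the same repeated integration by parts as in the proof of Lemma~\ref{lem1.1} (using $(-\Delta)^{m}V=0$), every boundary integral over $\partial\mathbb{R}^{2m}_+$ arising in $\int\nabla^{m}V\cdot\nabla^{m}W$ is annihilated by one of these conditions, so that $\int|\nabla^{m}U|^{2}=\int|\nabla^{m}V|^{2}+\int|\nabla^{m}W|^{2}\ge\int|\nabla^{m}V|^{2}$. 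Combined with Lemma~\ref{lem1.1}, this gives, for every admissible $U$,
\[
\frac{\Gamma(m)\Gamma(\frac{1}{2})}{\Gamma(m-\frac{1}{2})}\int_{\mathbb{R}^{2m-1}}\big|(-\Delta)^{\frac{2m-1}{4}}u\big|^{2}\,dx\;\le\;\int_{\mathbb{R}^{2m}_+}|\nabla^{m}U|^{2}\,dx\,dy,
\]
with equality precisely when $U=V_u$.

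Next, given an admissible $U$ with $\int|\nabla^{m}U|^{2}\le1$, I would perform the $m$-dependent rescaling: replace $U(\cdot,0)$ by $v:=c_m\,U(\cdot,0)$, where $c_m$ is the explicit constant (built from the fixed proportionality $U(\cdot,0)/u$ in \eqref{boudc1} and from $\Gamma(m)\Gamma(\frac{1}{2})/\Gamma(m-\frac{1}{2})$) for which the displayed inequality forces $\|(-\Delta)^{\frac{2m-1}{4}}v\|_{L^{2}(\mathbb{R}^{2m-1})}\le1$ whenever $\int|\nabla^{m}U|^{2}\le1$; the constant $\widetilde{\beta_m}$ is defined precisely so that $\beta<\widetilde{\beta_m}$ is equivalent to $\gamma:=\beta/c_m^{2}<\beta(2m-1,\frac{2m-1}{2})$. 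Then I apply the subcritical fractional Adams inequality on $\mathbb{R}^{2m-1}$ with exponent $\gamma$ --- which is an immediate consequence of the exact-growth inequality of Theorem~\ref{fratru} with $n=2m-1$, since for every $\gamma<\beta(2m-1,\frac{2m-1}{2})$ one has $\exp(\gamma t^{2})-1\le C_\gamma\,\big(\exp(\beta(2m-1,\frac{2m-1}{2})t^{2})-1\big)(1+t)^{-2}$ for all $t\ge0$ --- to obtain
\[
\int_{\partial\mathbb{R}^{2m}_+}\!\big(\exp(\beta|U(x,0)|^{2})-1\big)\,dx=\int_{\mathbb{R}^{2m-1}}\!\big(\exp(\gamma|v|^{2})-1\big)\,dx\le C\!\int_{\mathbb{R}^{2m-1}}\!|v|^{2}\,dx=C'\!\int_{\partial\mathbb{R}^{2m}_+}\!|U(x,0)|^{2}\,dx,
\]
which is \eqref{sub-1}.

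For the sharpness of $\widetilde{\beta_m}$ I would argue by contradiction: if \eqref{sub-1} held uniformly over all admissible $U$ with some $\beta\ge\widetilde{\beta_m}$, then restricting to the $m$-harmonic extensions $V_u$ of arbitrary $u\in W^{\frac{2m-1}{2},2}(\mathbb{R}^{2m-1})$ --- which automatically satisfy \eqref{boudc1}, \eqref{boudc2} --- and invoking the \emph{identity} in Lemma~\ref{lem1.1} would produce the subcritical fractional Adams inequality on $\mathbb{R}^{2m-1}$ with an exponent $\gamma\ge\beta(2m-1,\frac{2m-1}{2})$, contradicting the known optimality of $\beta(2m-1,\frac{2m-1}{2})$; concretely, one tests with the $m$-harmonic extensions of the standard Adams/Moser concentrating family on $\mathbb{R}^{2m-1}$. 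For the characterization of extremals one follows Theorem~\ref{thm1}: by the minimization property of the first step, any extremal $U$ of \eqref{sub-1} must be $V_u$ with $u$ proportional to $U(\cdot,0)$, so the problem reduces to maximizing $\int_{\mathbb{R}^{2m-1}}(\exp(\gamma|v|^{2})-1)\,dx\,\big/\,\int_{\mathbb{R}^{2m-1}}|v|^{2}\,dx$ over $\|(-\Delta)^{\frac{2m-1}{4}}v\|_{L^{2}}\le1$; applying the sharp Fourier rearrangement --- which fixes $\|v\|_{L^{2}}$, does not increase $\|(-\Delta)^{\frac{2m-1}{4}}v\|_{L^{2}}$, and does not decrease that ratio, its Taylor series being a sum of $L^{2k}$-norms with $k\ge1$ --- together with the scaling invariance $v\mapsto v(\lambda\,\cdot)$ and a concentration-compactness argument ruling out vanishing and dichotomy, one obtains a radial maximizer $u\in W^{\frac{2m-1}{2},2}(\mathbb{R}^{2m-1})$ whose $m$-harmonic extension is the desired $U$.

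The main obstacle is the boundary-term bookkeeping in the minimization step: one must verify that the $[\frac{m-1}{2}]+[\frac{m-2}{2}]+2=m$ conditions in \eqref{boudc1}, \eqref{boudc2} form exactly ``half'' of the $2m$ natural boundary data for the polyharmonic operator $(-\Delta)^{m}$ on the half-space and are arranged so that \emph{every} boundary integral produced when the derivatives in $\nabla^{m}V\cdot\nabla^{m}W$ are transferred from $W$ onto $V$ is killed --- this is where the precise Gamma-function constants in \eqref{boudc1} enter, matched to the computation already carried out in the proof of Lemma~\ref{lem1.1}. A secondary point is that one needs the subcritical fractional Adams inequality with the \emph{sharp} exponent in the \emph{odd} dimension $n=2m-1$; as noted in the remark after Theorem~\ref{adam-exact}, the clean route is through Theorem~\ref{fratru}, whose proof rests on Beckner's fractional Hardy--Rellich inequalities together with the reduction-of-orders method.
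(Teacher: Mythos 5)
Your proof follows essentially the same strategy as the paper's, which says only that Theorem \ref{thm1.1} follows ``by a similar argument as for Theorems \ref{thm1}, \ref{thm2} and \ref{adam-exact}'' once Lemma \ref{lem1.1} is in hand: pass to the $m$-harmonic extension, invoke a polyharmonic Dirichlet principle, translate via Lemma \ref{lem1.1} to the $\dot W^{\frac{2m-1}{2},2}(\mathbb{R}^{2m-1})$ norm of the trace, and apply the subcritical fractional Adams inequality on $\mathbb{R}^{2m-1}$, with sharpness and extremals handled by the same reduction and Fourier rearrangement as in the $m=2$ case. You fill in details the paper leaves implicit---in particular the orthogonality $\int \nabla^m V\cdot\nabla^m W=0$ behind the polyharmonic Dirichlet principle (which the paper merely cites for $m=2$) and the explicit $m$-dependent rescaling between $U(\cdot,0)$ and $u$---but the route is the same.
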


\begin{theorem}\label{thm2.1}
There exists a positive constant $C$ such that for all functions $U\in W^{m,2}(\overline{\mathbb{R}^{2m}_{+}})$ satisfying the Neumann boundary conditions \eqref{boudc1} and \eqref{boudc2} with $$\int_{\mathbb{R}^{2m}_{+}}|\nabla^{m}U(x,y)|^{2}dxdy+\int_{\partial\mathbb{R}^{2m}_{+}}|U(x,0)|^2dx\leq 1,$$ the following inequality holds
\begin{equation}\label{cri-1}
\int_{\partial\mathbb{R}^{2m}_{+}}(\exp(\widetilde{\beta_m}|U(x,0)|^{2}\big)-1)dx
\leq C.
\end{equation}
Moreover, the constant $\widetilde{\beta_m}$ is sharp in the sense that the inequality fails to hold uniformly if the constant $\widetilde{\beta_m}$ is replaced by any $\beta>\widetilde{\beta_m}$.
\end{theorem}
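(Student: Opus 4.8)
The plan is to run the argument for Theorem~\ref{thm2} with Lemma~\ref{lem1.1} replacing its second-order predecessor, and to deduce the critical inequality \eqref{cri-1} from the subcritical Theorem~\ref{thm1.1} together with the exact-growth inequality of Theorem~\ref{fratru}. Write $n=2m-1$ and $a_m=\tfrac{\Gamma(m)\Gamma(1/2)}{\Gamma(m-1/2)}$, and note $a_m\ge 2$ (indeed $a_2=2$ and $a_{m+1}/a_m=\tfrac{m}{m-1/2}>1$); by definition $\widetilde{\beta_m}=a_m\beta(n,\tfrac n2)$. The first step is a reduction to $m$-harmonic extensions: given $U$ as in the statement, let $V$ be the $m$-harmonic extension of $u:=U(\cdot,0)$ furnished by Lemma~\ref{lem1.1}, so that $V$ satisfies \eqref{boudc1}--\eqref{boudc2} and shares its trace with $U$, whence $W:=U-V$ has vanishing trace along with the homogeneous versions of \eqref{boudc1}--\eqref{boudc2}. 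Integrating by parts $m$ times,
\[
\int_{\mathbb{R}^{2m}_{+}}\nabla^{m}V\cdot\nabla^{m}W\,dxdy\ =\ (-1)^{m}\int_{\mathbb{R}^{2m}_{+}}(\Delta^{m}V)\,W\,dxdy\ +\ \bigl(\text{boundary terms on }\{y=0\}\bigr)\ =\ 0,
\]
the first term vanishing because $V$ is $m$-harmonic and the boundary terms because \eqref{boudc1}--\eqref{boudc2} are exactly the self-adjoint boundary conditions for the form $\int|\nabla^{m}\cdot|^{2}$ (this is the computation behind Lemma~\ref{lem1.1}, done in Section~\ref{S5}). Hence $\int|\nabla^{m}U|^{2}=\int|\nabla^{m}V|^{2}+\int|\nabla^{m}W|^{2}\ge\int|\nabla^{m}V|^{2}$, so replacing $U$ by $V$ keeps the trace and only relaxes the constraint, and we may assume $U=V$ is $m$-harmonic. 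Lemma~\ref{lem1.1} then gives $\int_{\mathbb{R}^{2m}_{+}}|\nabla^{m}U|^{2}=a_m\|(-\Delta)^{n/4}u\|_{L^{2}(\mathbb{R}^{n})}^{2}$, turning the hypothesis into $a_m\|(-\Delta)^{n/4}u\|_{2}^{2}+\|u\|_{2}^{2}\le 1$ and the conclusion \eqref{cri-1} into the critical fractional Adams inequality $\int_{\mathbb{R}^{n}}\bigl(\exp(a_m\beta(n,\tfrac n2)|u|^{2})-1\bigr)dx\le C$.

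To establish this last inequality I would first restate Theorem~\ref{thm1.1} on $\mathbb{R}^{n}$ by the same reduction and Lemma~\ref{lem1.1}: for each fixed $\gamma<\beta(n,\tfrac n2)$ there is $C_\gamma$ with $\int_{\mathbb{R}^{n}}(\exp(\gamma|w|^{2})-1)\,dx\le C_\gamma\|w\|_{2}^{2}$ whenever $\|(-\Delta)^{n/4}w\|_{2}\le 1$. If $\|(-\Delta)^{n/4}u\|_{2}=0$ then $u\equiv 0$; otherwise set $\lambda:=\|(-\Delta)^{n/4}u\|_{2}>0$, $w:=u/\lambda$, $\theta:=a_m\lambda^{2}$, so $\|(-\Delta)^{n/4}w\|_{2}=1$, $\theta\in(0,1)$, and $a_m\beta(n,\tfrac n2)|u|^{2}=\theta\,\beta(n,\tfrac n2)|w|^{2}$. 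When $\theta\le\tfrac12$, the function $\sqrt{2a_m}\,u$ has homogeneous seminorm $\le 1$, and applying the restated subcritical inequality with $\gamma=\tfrac12\beta(n,\tfrac n2)$ bounds the left side by $2a_mC_\gamma\|u\|_{2}^{2}\le 2a_mC_\gamma$. When $\theta>\tfrac12$, I would use the elementary pointwise estimate (from $\max_{t\ge 0}te^{-ct}=\tfrac1{ec}$), valid for $s\ge 0$ and $\theta\in(0,1)$,
\[
(1+s)^{2}\bigl(e^{\theta\beta(n,n/2)s^{2}}-1\bigr)\ \le\ \frac{C_{0}}{1-\theta}\bigl(e^{\beta(n,n/2)s^{2}}-1\bigr),\qquad C_{0}=C_{0}(n),
\]
apply it with $s=|w(x)|$, integrate, and invoke Theorem~\ref{fratru} (legitimate since $\|(-\Delta)^{n/4}w\|_{2}=1$):
\[
\int_{\mathbb{R}^{n}}\bigl(\exp(a_m\beta(n,\tfrac n2)|u|^{2})-1\bigr)dx\ \le\ \frac{C_{0}}{1-\theta}\int_{\mathbb{R}^{n}}\frac{e^{\beta(n,n/2)|w|^{2}}-1}{(1+|w|)^{2}}\,dx\ \le\ \frac{C_{0}C_{\mathrm{eg}}}{1-\theta}\,\|w\|_{2}^{2}.
\]
Since $\|w\|_{2}^{2}=\|u\|_{2}^{2}/\lambda^{2}\le(1-\theta)/\lambda^{2}$ and $\lambda^{2}>\tfrac1{2a_m}$ in this case, the right side is at most $C_{0}C_{\mathrm{eg}}/\lambda^{2}\le 2a_mC_{0}C_{\mathrm{eg}}$. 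In either case the bound depends only on $m$, which proves \eqref{cri-1}.

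For sharpness, fix $\beta>\widetilde{\beta_m}$ and take a Moser-type sequence $w_{k}\in W^{n/2,2}(\mathbb{R}^{n})$ (truncated fundamental solutions of $(-\Delta)^{n/4}$) with $\|(-\Delta)^{n/4}w_{k}\|_{2}=1$, $\varepsilon_{k}:=\|w_{k}\|_{2}\to 0$, and $\int_{\mathbb{R}^{n}}(\exp(\beta'|w_{k}|^{2})-1)\,dx\to\infty$ for every $\beta'>\beta(n,\tfrac n2)$ — the standard witnesses to the sharpness of $\beta(n,\tfrac n2)$ in the fractional Adams inequality on $\mathbb{R}^{n}$. Put $\lambda_{k}^{2}:=a_m^{-1}(1-\varepsilon_{k}^{2})$, $u_{k}:=\lambda_{k}w_{k}$, and let $U_{k}$ be the $m$-harmonic extension of $u_{k}$. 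Then $\int_{\mathbb{R}^{2m}_{+}}|\nabla^{m}U_{k}|^{2}+\int_{\partial\mathbb{R}^{2m}_{+}}|U_{k}(x,0)|^{2}=a_m\lambda_{k}^{2}+\lambda_{k}^{2}\varepsilon_{k}^{2}<1$ (using $a_m\ge 2$), whereas $\int_{\partial\mathbb{R}^{2m}_{+}}(\exp(\beta|U_{k}(x,0)|^{2})-1)\,dx=\int_{\mathbb{R}^{n}}(\exp(\beta\lambda_{k}^{2}|w_{k}|^{2})-1)\,dx$ with $\beta\lambda_{k}^{2}=\tfrac{\beta}{a_m}(1-\varepsilon_{k}^{2})\to\tfrac{\beta}{a_m}>\beta(n,\tfrac n2)$; hence this integral blows up along the sequence, so \eqref{cri-1} cannot hold uniformly with $\widetilde{\beta_m}$ replaced by $\beta$.

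The genuinely delicate point is not this deduction — which is the by-now-standard way of passing from a subcritical and an exact-growth inequality to the critical one — but the identity in Lemma~\ref{lem1.1}: one must identify \eqref{boudc1}--\eqref{boudc2} as the self-adjoint boundary conditions making the $m$-harmonic extension the minimizer of $\int|\nabla^{m}\cdot|^{2}$ for a given trace, and compute the resulting constant to be exactly $a_m=\tfrac{\Gamma(m)\Gamma(1/2)}{\Gamma(m-1/2)}$, which is what forces $\widetilde{\beta_m}=a_m\beta(2m-1,\tfrac{2m-1}{2})$ to be the sharp threshold (this in turn rests on the sharp fractional Adams constant $\beta(n,\tfrac n2)$ for the odd exponent $n=2m-1$, supplied by Theorem~\ref{fratru}). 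Note, finally, that Theorem~\ref{thm2.1} asserts only the inequality and its sharpness — no extremal is claimed — so, unlike Theorems~\ref{thm1} and \ref{thm1.1}, the present proof needs neither a Fourier-rearrangement nor a concentration-compactness step.
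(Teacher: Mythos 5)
Your proof is correct and follows the same overall strategy as the paper's sketch (reduce to the $m$-harmonic extension via Lemma~\ref{lem1.1} and a Dirichlet-principle argument, then establish a critical fractional Adams inequality on $\mathbb{R}^{2m-1}$), but you handle the decisive step more carefully than the paper does. The paper's proof of Theorem~\ref{thm2}, which Theorem~\ref{thm2.1} is stated to follow ``by a similar argument,'' simply invokes ``the critical fractional Adams inequalities in $W^{3/2,2}(\mathbb{R}^3)$'' after the reduction. However, the reduction produces the constraint $a_m\|(-\Delta)^{n/4}u\|_2^2+\|u\|_2^2\le 1$ with $a_m=\Gamma(m)\Gamma(1/2)/\Gamma(m-1/2)$; after normalizing the seminorm this becomes a constraint with the $L^2$-weight $1/a_m<1$, strictly weaker than the standard one, so the paper is implicitly relying on the (true but unreferenced) fact that the critical inequality is insensitive to the $L^2$-weight. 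You instead \emph{derive} the needed weighted critical inequality from the subcritical inequality together with the exact-growth inequality of Theorem~\ref{fratru}, splitting on $\theta=a_m\lambda^2\lessgtr\tfrac12$, using the pointwise bound $(1+s)^2(e^{\theta\beta s^2}-1)\le C_0(1-\theta)^{-1}(e^{\beta s^2}-1)$, and exploiting the cancellation $\|w\|_2^2\le(1-\theta)/\lambda^2$. This is self-contained within the paper, handles the weight $1/a_m$ uniformly, and is in the spirit of the equivalence results of \cite{LaZ} that the paper alludes to. Your orthogonality argument $\int\nabla^mV\cdot\nabla^mW=0$ is the correct content behind the paper's one-line appeal to the Dirichlet principle; the sharpness construction (using $a_m\ge2$ to verify the constraint, then sending $\beta\lambda_k^2\to\beta/a_m>\beta(n,\tfrac n2)$ against a Fontana--Morpurgo test sequence with $\|w_k\|_2\to0$) is correct, and the required property of the test sequence follows from the estimates $\|I_{n/2}\widetilde\phi_{\varepsilon,r}\|_2^2\le Cr^n$ and $\|\widetilde\phi_{\varepsilon,r}\|_2^2\gtrsim-\log(\varepsilon r)^n$ already recorded in Section~6 of the paper. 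In short: same architecture, but you replace a bare citation at the crux with a complete derivation that happens to be exactly what Theorem~\ref{fratru} was proved for.
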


\begin{theorem}\label{highadm-exact}
There exists a positive constant $C$ such that for all functions $U\in W^{m,2}(\overline{\mathbb{R}^{2m}_{+}})$ satisfying the Neumann boundary condition \eqref{boudc1} and \eqref{boudc2} with $$\int_{\mathbb{R}^{2m}_{+}}|\nabla^{m}U(x,y)|^{2}dxdy=1,$$ the following inequality holds
\begin{equation}\label{highadmexa}
\int_{\partial\mathbb{R}^{2m}_{+}}\frac{(\exp(\widetilde{\beta_m}|U(x,0)|^{2})-1)}{(1+|U(x,0)|)^2}dx
\leq C \int_{\partial\mathbb{R}^{2m}_{+}}|U(x,0)|^{2}dx.
\end{equation}
Moreover, this inequality fails if the power $2$ in the denominator on the left hand side is replaced by any $p<2$.
\end{theorem}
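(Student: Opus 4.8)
The plan is to reduce the trace Adams inequality with exact growth on $W^{m,2}(\overline{\mathbb{R}^{2m}_{+}})$ to the fractional Trudinger-Moser-Adams inequality with exact growth on $W^{\frac{2m-1}{2},2}(\mathbb{R}^{2m-1})$ from Theorem \ref{fratru}, exactly paralleling the argument that proves Theorem \ref{adam-exact} from Theorem \ref{fratru} in the case $m=2$. First, given $U\in W^{m,2}(\overline{\mathbb{R}^{2m}_{+}})$ satisfying the Neumann boundary conditions \eqref{boudc1}, \eqref{boudc2} with $\int_{\mathbb{R}^{2m}_{+}}|\nabla^m U|^2\,dxdy=1$, I would let $u=U(\cdot,0)\in W^{m-\frac12,2}(\mathbb{R}^{2m-1})$ be its trace. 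By the trace theory, $u$ lies in the correct fractional Sobolev space. The key point is the comparison of energies: among all extensions of $u$ to $\mathbb{R}^{2m}_{+}$ with the prescribed Neumann data, the $m$-harmonic extension $V$ (the solution of $(-\Delta)^m V=0$ with boundary conditions \eqref{boudc1}, \eqref{boudc2}) minimizes $\int_{\mathbb{R}^{2m}_{+}}|\nabla^m V|^2\,dxdy$, and by Lemma \ref{lem1.1} this minimal value equals $\frac{\Gamma(m)\Gamma(\frac12)}{\Gamma(m-\frac12)}\int_{\mathbb{R}^{2m-1}}|(-\Delta)^{\frac{2m-1}{4}}u|^2\,dx$. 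Hence
\[
\frac{\Gamma(m)\Gamma(\tfrac12)}{\Gamma(m-\tfrac12)}\int_{\mathbb{R}^{2m-1}}|(-\Delta)^{\frac{2m-1}{4}}u|^2\,dx\;\le\;\int_{\mathbb{R}^{2m}_{+}}|\nabla^m U|^2\,dxdy=1.
\]

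Next I would rescale: setting $w=\big(\tfrac{\Gamma(m)\Gamma(1/2)}{\Gamma(m-1/2)}\big)^{1/2}u$, the above gives $\int_{\mathbb{R}^{2m-1}}|(-\Delta)^{\frac{2m-1}{4}}w|^2\,dx\le 1$; after a further harmless normalization one may assume equality (or instead use the homogeneous version of Theorem \ref{fratru}, which holds with $=1$ replaced by $\le 1$ after dividing through). Apply Theorem \ref{fratru} with $n=2m-1$ to $w$: since $\exp(\beta(2m-1,\tfrac{2m-1}{2})|w|^2)=\exp\big(\tfrac{\Gamma(m)\Gamma(1/2)}{\Gamma(m-1/2)}\beta(2m-1,\tfrac{2m-1}{2})|u|^2\big)=\exp(\widetilde{\beta_m}|u|^2)$ by the definition of $\widetilde{\beta_m}$, and since $|w|$ and $|u|$ differ by a fixed positive constant so that $(1+|w|)^{-2}\asymp(1+|u|)^{-2}$, we obtain
\[
\int_{\mathbb{R}^{2m-1}}\frac{\exp(\widetilde{\beta_m}|u(x)|^2)-1}{(1+|u(x)|)^2}\,dx\;\le\;C\int_{\mathbb{R}^{2m-1}}|u(x)|^2\,dx,
\]
and since $u(x)=U(x,0)$ and $\partial\mathbb{R}^{2m}_{+}\cong\mathbb{R}^{2m-1}$, this is precisely \eqref{highadmexa}.

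For the sharpness of the exponent $2$ in the denominator, I would transplant the extremal-sequence construction from the proof of Theorem \ref{fratru}: take the sequence $w_k$ on $\mathbb{R}^{2m-1}$ that witnesses the failure of the fractional inequality with any $p<2$, let $u_k=\big(\tfrac{\Gamma(m-1/2)}{\Gamma(m)\Gamma(1/2)}\big)^{1/2}w_k$, and let $U_k$ be the $m$-harmonic extension of $u_k$ satisfying \eqref{boudc1}, \eqref{boudc2}; by Lemma \ref{lem1.1} these have $\int_{\mathbb{R}^{2m}_{+}}|\nabla^m U_k|^2=\int_{\mathbb{R}^{2m-1}}|(-\Delta)^{\frac{2m-1}{4}}w_k|^2=1$, and the left side of \eqref{highadmexa} with $p<2$ in place of $2$ blows up along this sequence while $\int_{\partial\mathbb{R}^{2m}_{+}}|U_k(x,0)|^2\,dx$ stays controlled. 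The main obstacle is the energy-minimization/identity step: one must verify that the $m$-harmonic extension with the \emph{specific} Neumann data in \eqref{boudc1}, \eqref{boudc2} is indeed the energy minimizer among competitors with that data — this requires an integration-by-parts argument showing the cross terms vanish, which is exactly the computation encapsulated in Lemma \ref{lem1.1}; everything else is routine rescaling and the cited inequalities. I would also note that, unlike Theorems \ref{thm1.1} and \ref{thm2.1}, no extremal-function assertion is claimed here, so no concentration-compactness analysis is needed.
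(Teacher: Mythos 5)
Your proposal is correct and follows essentially the same route the paper indicates: reduce to the fractional exact-growth inequality (Theorem \ref{fratru} with $n=2m-1$) via the $m$-harmonic extension, the polyharmonic Dirichlet principle, and the energy identity in Lemma \ref{lem1.1}, exactly paralleling the proof of Theorem \ref{adam-exact} in the case $m=2$. The paper itself gives no separate argument for this theorem (it merely appeals to the $m=2$ case plus Lemma \ref{lem1.1}), so your write-up, including the $w=c\,u$ rescaling that converts $\beta(2m-1,\tfrac{2m-1}{2})$ into $\widetilde{\beta_m}$ and the observation that $(1+|w|)^{-2}\asymp(1+|u|)^{-2}$, is a faithful and slightly more explicit version of what the authors intend.
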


Finally, we are also concerned with the ground-state solutions to the equation associated with the trace Adams inequality in $W^{m,2}(\mathbb{R}^{2m}_{+})$. For simplicity, we only consider the case $m=2$. By the Euler-Lagrange multiplier theorem, extremals of the trace Adams supremum
\[
 \underset{\int_{\mathbb{R}^4_{+}}|\Delta U(x,y)|^2dxdy+\int_{\partial\mathbb{R}^4_{+}}|U(x,y)|^2dx\leq 1}{\sup}\int_{%
\partial \mathbb{R}^4_{+}}\left(  \exp\left( 12\pi^2\left\vert U\right\vert ^{2}-1\right)
\right)  dx
\]
must satisfy the following Euler-Lagrange equation with nonlinear Neumann boundary conditions: there exists some $\lambda$ such that

\begin{align}\label{Pro}
\begin{cases}
\Delta^{2}U(x,y)=0$ for $(x,y)\in \mathbb{R}^3\times \mathbb{R}^{+}, \\
(-\frac{\partial(\Delta U)}{\partial y}+U(x,y))|_{y=0}=\lambda U(x,y)\exp(12 \pi^2 U^{2}(x,y))|_{y=0}, \\
\frac{\partial U}{\partial y}|_{y=0}=0.
\end{cases}
\end{align}
We are interested in the existence of ground-state solutions to equation \eqref{Pro} with some fixed $\lambda$. Define  $$E:=\{U\in W^{2,2}(\overline{\mathbb{R}^4_{+}})\cap L^{2}(\partial\mathbb{R}^4_{+}):\ \partial_yU(x,y)|_{y=0}=0\},$$  the corresponding functional of equation \eqref{Pro} is

\begin{equation}\label{functional}
I_{\lambda}(U)=\frac{1}{2}\int_{\mathbb{R}^4_{+}}|\Delta U|^2dxdy+\frac{1}{2}\int_{\partial\mathbb{R}^4_{+}}|U|^2dx
 -\frac{\lambda}{24\pi^2}\int_{\partial\mathbb{R}^4_{+}}\big(\exp(12\pi^2|U|^2)-1\big)dx.\end{equation}
It is easy to check that $I_\lambda\in C^1(E,\mathbb{R})$, and
\begin{equation}\begin{split}
I_{\lambda}'(U)V&=\int_{\mathbb{R}^4_{+}}\Delta U(x,y)\Delta V(x,y)dxdy+\int_{\partial\mathbb{R}^4_{+}}U(x,0)V(x,0)dx\\
&\ \ -\lambda\int_{\partial\mathbb{R}^4_{+}}\exp(12\pi^2|U(x,0)|^2)U(x,0)V(x,0)dx,\ \  U,V\in E.
\end{split}\end{equation}
Define
$$M_{\lambda}=\inf \{I^\prime_{\lambda}(U)=0\ |U\in E\}.$$
By using the sharp trace Adams inequalities \eqref{cri} and variational method, we can prove the following result.
\begin{theorem}\label{thm3}
For any $0<\lambda<1$, there exists $V\in E$ such that $I_{\lambda}(V)=M_{\lambda}$, that is to say that the problem (\ref{Pro})   has a ground state solution provided  $0<\lambda<1$.
\end{theorem}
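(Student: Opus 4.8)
The plan is to prove the existence of a ground state solution by a mountain-pass / Nehari manifold argument combined with the critical trace Adams inequality \eqref{cri}. First I would verify that $I_\lambda$ has the mountain-pass geometry on $E$: since $\exp(12\pi^2 t)-1 = 12\pi^2 t^2 + o(t^2)$ near $t=0$, the nonlinear term is a higher-order perturbation of the quadratic part, so for $\|U\|_E$ small (where $\|U\|_E^2 = \int_{\mathbb{R}^4_+}|\Delta U|^2 + \int_{\partial\mathbb{R}^4_+}|U|^2$) one has $I_\lambda(U) \ge \rho > 0$ on a small sphere, using the critical inequality \eqref{cri} (or its subcritical analogue, Theorem \ref{thm2}) together with a H\"older estimate to control $\int_{\partial\mathbb{R}^4_+}(\exp(12\pi^2|U|^2)-1)\,dx$ by a power of $\|U\|_E$ strictly larger than $2$ when $\|U\|_E < 1$. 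On the other hand, for fixed $U\not\equiv 0$ on the boundary, $I_\lambda(tU)\to -\infty$ as $t\to\infty$ because the exponential term dominates, so there is $e\in E$ with $I_\lambda(e)<0$. Define the mountain-pass level $c_\lambda = \inf_{\gamma}\max_{t}I_\lambda(\gamma(t))$ over paths from $0$ to $e$; a standard argument identifies $c_\lambda$ with $M_\lambda$ via the Nehari manifold.

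Next I would produce a Palais–Smale sequence $\{U_k\}$ at level $c_\lambda$, i.e. $I_\lambda(U_k)\to c_\lambda$ and $I_\lambda'(U_k)\to 0$ in $E'$. From $I_\lambda(U_k) - \tfrac12 I_\lambda'(U_k)U_k$ one extracts, using the elementary inequality $\tfrac12 t\,(\exp(12\pi^2 t^2)-1)\ge \tfrac{1}{24\pi^2}(\exp(12\pi^2 t^2)-1)$ for $t\ge 0$, a uniform bound on $\|U_k\|_E$ and on $\int_{\partial\mathbb{R}^4_+}|U_k|^2\exp(12\pi^2|U_k|^2)\,dx$. Passing to a subsequence, $U_k\rightharpoonup V$ in $E$, $U_k(\cdot,0)\to V(\cdot,0)$ a.e. on $\partial\mathbb{R}^4_+$ and strongly in $L^q_{loc}$ for all $q<\infty$ by the trace-compactness of the embedding $W^{2,2}(\overline{\mathbb{R}^4_+})\cap L^2(\partial\mathbb{R}^4_+) \hookrightarrow L^q_{loc}(\partial\mathbb{R}^4_+)$. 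One then shows that the weak limit $V$ is a weak solution of \eqref{Pro}: the linear terms pass to the limit by weak convergence, and the nonlinear term $\exp(12\pi^2|U_k|^2)U_k V$ converges, for each fixed test function $V\in E$, by a generalized dominated convergence / Vitali argument using the uniform $L^1$ bound on $|U_k|\exp(12\pi^2|U_k|^2)$ on bounded sets together with the critical inequality \eqref{cri} to rule out concentration of the exponential mass — this is where the hypothesis $0<\lambda<1$ enters, ensuring the mountain-pass level stays strictly below the threshold $c_\lambda < \tfrac{1}{2}\big(1-\tfrac{1}{\,\cdot\,}\big)$ type bound below which no loss of compactness via a nontrivial exponential bubble can occur.

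The main obstacle, as always for critical-growth variational problems of Trudinger–Moser/Adams type, is the \textbf{loss of compactness at the critical level}: a Palais–Smale sequence can either converge weakly to $0$ while the exponential nonlinearity concentrates at a point, or the limit $V$ can be a solution with strictly smaller energy than $c_\lambda$. To overcome this I would establish a quantitative energy threshold: if $\|U_k\|_E \to 1$ (concentration of the full Dirichlet-plus-boundary norm) then the exponential integrals blow up and $I_\lambda(U_k)\to -\infty$, contradicting $I_\lambda(U_k)\to c_\lambda$; and if $V\equiv 0$, a Lions-type concentration-compactness lemma on $\partial\mathbb{R}^4_+$ combined with \eqref{cri} forces $\limsup_k\|U_k\|_E^2 < 1$, hence (again by \eqref{cri}) the exponential term is uniformly integrable with a slightly higher exponent, giving $\int_{\partial\mathbb{R}^4_+}(\exp(12\pi^2|U_k|^2)-1)\,dx\to 0$ and thus $c_\lambda = \lim I_\lambda(U_k) = \lim\tfrac12\|U_k\|_E^2 \ge 0$, while $I_\lambda'(U_k)U_k\to 0$ would force $\|U_k\|_E\to 0$ and $c_\lambda = 0$, contradicting $c_\lambda \ge \rho > 0$. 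Therefore $V\not\equiv 0$. Finally, using Brezis–Lieb splitting for the $L^2$-norms and the weak lower semicontinuity together with the fact that $V$ is itself a nonzero critical point, one concludes $I_\lambda(V)\le c_\lambda = M_\lambda$, and since $V$ lies on the Nehari manifold, $I_\lambda(V)\ge M_\lambda$, so $I_\lambda(V)=M_\lambda$; standard regularity then upgrades $V$ to a genuine ground state solution of \eqref{Pro}, completing the proof.
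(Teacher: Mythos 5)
Your proposal takes a genuinely different route from the paper. The paper never performs a mountain-pass or Palais--Smale analysis on $I_\lambda$ at all. Instead it exploits the fact that every solution of \eqref{Pro} is the bi-harmonic extension of its own boundary trace and, via Lemma \ref{lem3}, $I_\lambda(U)=J_\lambda(u)$ where $u=U(\cdot,0)$ and $J_\lambda$ is the reduced functional on $W^{3/2,2}(\mathbb{R}^3)$. The existence of a ground state for $J_\lambda$ (Proposition \ref{technical lemma}) is then proved by constrained minimization over the Pohozaev-type set $\{G_\lambda(u)=0\}$, with compactness restored by the Fourier rearrangement $u^{\sharp}$ (which reduces to radial sequences and allows the compact embedding $W^{3/2,2}_{r}(\mathbb{R}^3)\hookrightarrow L^p$, $p>2$), and the key quantitative step is the explicit inequality $A_\lambda<\tfrac12$ against the Adams threshold $R(g_\lambda)=\tfrac12$. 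The reduction-to-the-boundary step is what makes the translation invariance tractable; your scheme keeps the full problem on $E$ and must then confront invariance by hand.

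There are, however, concrete gaps in your argument that prevent it from standing as written. First and most seriously, you never address the translation invariance of \eqref{Pro} along $\partial\mathbb{R}^4_+\cong\mathbb{R}^3$. Your local trace-compactness $W^{2,2}\hookrightarrow L^q_{\mathrm{loc}}(\partial\mathbb{R}^4_+)$ cannot exclude vanishing: a Palais--Smale sequence may translate off to infinity, in which case $U_k\rightharpoonup 0$ with $\|U_k\|_E$ bounded away from $0$ and the exponential mass escaping, and none of the steps you sketch rules this out. In the paper this is precisely what Fourier rearrangement plus the radial compact embedding are for. Second, the step ``if $\|U_k\|_E\to1$ then the exponential integrals blow up and $I_\lambda(U_k)\to-\infty$'' is false: Theorem \ref{thm2} says exactly the opposite, namely that the exponential integral stays uniformly bounded under $\|U\|_E\le 1$. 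The obstruction at the critical level is loss of compactness (failure of uniform integrability), not divergence of the integral. Third, the energy threshold estimate that would make $0<\lambda<1$ do its work is left as a placeholder (``$c_\lambda<\tfrac12(1-\tfrac1{\,\cdot\,})$ type bound''), whereas the paper's Lemma \ref{rem2} together with the claim $A_\lambda<\tfrac12$ gives the concrete inequality needed, proved by exploiting $C_{A,\lambda}^{*}=\infty$. Finally, a small error: the inequality you invoke for PS-boundedness, $\tfrac12 t\,(e^{12\pi^2 t^2}-1)\ge\tfrac{1}{24\pi^2}(e^{12\pi^2 t^2}-1)$, is false near $t=0$; the correct comparison arising from $I_\lambda-\tfrac12 I'_\lambda$ is between $\tfrac12 t^2 e^{12\pi^2 t^2}$ and $\tfrac{1}{24\pi^2}(e^{12\pi^2 t^2}-1)$, which does hold for all $t\ge0$, so this one is easily repaired. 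The first three gaps are structural; without a mechanism to kill vanishing (either radial reduction as in the paper, or a full Lions concentration-compactness decomposition tailored to the trace setting), the mountain-pass route as written does not close.
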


This paper is organized as follows. In Section 2, we establish the sharp subcritical and critical trace Trudinger-Moser inequalities in $\mathbb{R}^2$ and existence of their extremal functions based on the harmonic extension. Section 3 is devoted to the proof of the sharp trace Adams inequalities and the existence of the extremals for subcritical trace Adams inequalities in $\mathbb{R}^4$. In Section 4, we are concerned with the existence of a ground state solution to a  class of problems with the nonlinear Neumann boundary condition on the half space by using the trace Adams inequality, sharp Fourier rearrangement principle and  variational method. Section 5 is devoted to the proof of Lemma \ref{lem1.1} which is the main technical
part of establishing Theorems \ref{thm1.1}, \ref{thm2.1} and \ref{highadm-exact} in the case $m>2$ in $\mathbb{R}^{2m}$ by adapting the same argument of proving Theorems \ref{thm1} and \ref{thm2} in the case $m=2$. In Section 6, we give the proof of the fractional Adams inequalities with the exact growth in $W^{\frac{n}{2},2}(\mathbb{R}^n)$, which is needed in proving Theorem \ref{adam-exact} and (\ref{highadmexa}).

 \section{Trace Trudinger-Moser inequalities on the half space $\mathbb{R}^{2}_{+}$}
In this section, we consider the sharp subcritical and critical trace Trudinger-Moser inequalities and the existence of their extremal functions. The method is based on the harmonic extension, sharp subcritical fractional Adams inequalities in $W^{\frac{1}{2},2}(\mathbb{R})$ and existence of their extremals. First, we introduce some known results about  the fractional Adams inequalities and the harmonic extension.
\begin{lemma}\label{addlem1}(\cite{Fon})
For $0<\beta<\pi$, then there exists a positive constant $C$ such that for all functions $u \in W^{\frac{1}{2},2}(\mathbb{R})$ with $\|(-\Delta)^{\frac{1}{4}} u\|_{2}=1$, the following inequality holds.
\begin{equation}\label{addintq1}
\int_{\mathbb{R}}(\exp(\beta|u|^{2})-1)dx
\leq C\int_{\mathbb{R}}|u|^{2}dx,
\end{equation}
 Moreover, the constant $\pi$ is sharp in the sense that the inequality fails if the constant $\beta$ is replaced by any $\beta\geq\pi$.
\end{lemma}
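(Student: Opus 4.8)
The plan is to prove Lemma~\ref{addlem1} by representing $u$ through a Riesz potential and reducing the exponential integrability to a one–dimensional calculus lemma of Adams type, exploiting the scale invariance of the homogeneous norm $\|(-\Delta)^{1/4}\cdot\|_{L^2(\mathbb{R})}$. First I set $g:=(-\Delta)^{1/4}u$, so $\|g\|_2=1$ and $u=I_{1/2}g$, where $I_{1/2}$ is the Riesz potential of order $\tfrac12$ on $\mathbb{R}$, whose kernel is $c_0|x|^{-1/2}$. Since this kernel is radially decreasing, the Riesz rearrangement and O'Neil inequalities allow me to replace $g$ by its symmetric decreasing rearrangement $g^{*}$ without decreasing any quantity appearing in \eqref{addintq1} and without changing $\|g\|_2$; hence I may assume $u=I_{1/2}g^{*}$ is even and nonincreasing in $|x|$. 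This circumvents the fact that there is no Pólya–Szeg\"o principle for the fractional norm itself.

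Next, by O'Neil's lemma the decreasing rearrangement $u^{*}$ satisfies $u^{*}(t)\le c_0 t^{-1/2}\int_0^t g^{*}(s)\,ds+c_0\int_t^{\infty}s^{-1/2}g^{*}(s)\,ds$. Substituting $t=e^{-r}$ and setting $\phi(\rho)=g^{*}(e^{-\rho})e^{-\rho/2}$ (so that $\int_{\mathbb{R}}\phi^2\le\|g\|_2^2=1$) turns this into a convolution estimate $w(r):=u^{*}(e^{-r})\le\int_{\mathbb{R}}k(r-\rho)\phi(\rho)\,d\rho$ with an explicit kernel $k\in L^2$ whose $L^2$-mass is exactly what produces the threshold $\pi$; simultaneously $\int_{\mathbb{R}}(\exp(\beta u^2)-1)\,dx=\int_{\mathbb{R}}(\exp(\beta w(r)^2)-1)e^{-r}\,dr$ and $\int_{\mathbb{R}}u^2\,dx=\int_{\mathbb{R}}w(r)^2 e^{-r}\,dr$. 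Then I invoke the Adams--Fontana--Morpurgo one–dimensional lemma to conclude $\int_{\mathbb{R}}(\exp(\pi w^2)-1)e^{-r}\,dr<\infty$, and I use the strict gap $\pi-\beta>0$ to upgrade this to the quadratic bound $\int_{\mathbb{R}}(\exp(\beta w^2)-1)e^{-r}\,dr\le C\int_{\mathbb{R}}w^2 e^{-r}\,dr$. Concretely one splits $\mathbb{R}$ into $\{|u|\le M\}$, where $\exp(\beta u^2)-1\le C_M u^2$ by monotonicity of $t\mapsto(e^{\beta t}-1)/t$, and $\{|u|>M\}=\{|x|<R\}$, an interval since $u$ is decreasing, of length $\le M^{-2}\|u\|_2^2$ by Chebyshev; on the latter region the scale invariance $\|(-\Delta)^{1/4}(u(R\,\cdot))\|_2=\|(-\Delta)^{1/4}u\|_2$ rescales it to a fixed bounded interval, where the sharp bounded-domain fractional Adams inequality (valid up to exponent $\pi$, hence with room for $\beta<\pi$) bounds the contribution by $C|\{|u|>R\}|\le C M^{-2}\|u\|_2^2$.

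For sharpness, if $\beta\ge\pi$ I use the fractional Moser sequence: choose $g_\varepsilon$ so that $u_\varepsilon=I_{1/2}g_\varepsilon$ is approximately $\pi^{-1/2}\sqrt{\log(1/\varepsilon)}$ on $\{|x|<\varepsilon\}$ and decays outside, normalized so that $\|(-\Delta)^{1/4}u_\varepsilon\|_2=1$. Then $\int_{|x|<\varepsilon}(\exp(\beta u_\varepsilon^2)-1)\,dx\gtrsim\varepsilon^{\,1-\beta/\pi}$, which diverges for $\beta>\pi$ and stays bounded away from $0$ for $\beta=\pi$, whereas $\int_{\mathbb{R}}u_\varepsilon^2\,dx\to0$; hence the ratio in \eqref{addintq1} is unbounded and $\pi$ cannot be improved.

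The main obstacle is the first reduction together with the constant-tracking in Step two: unlike the classical local Trudinger--Moser inequality, there is no symmetrization of $u$ that decreases $\|(-\Delta)^{1/4}u\|_2$, so the whole argument must be carried out on the Riesz-potential side via O'Neil's rearrangement of convolutions, and the exponent $\pi$ emerges only if the rearranged kernel $k$ and its $L^2$-norm are computed sharply. This is also precisely the place where the half-dimensional nature of the problem ($s=n/2$ with $n=1$), equivalently the scale invariance of the norm, is essential; without it the passage to a bounded interval in the tail estimate would not be available.
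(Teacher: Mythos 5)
The paper itself offers \emph{no} proof of Lemma~\ref{addlem1}: it is stated as a citation to Fontana--Morpurgo \cite{Fon}, and the text uses it as a black box. Your proposal therefore does more work than the paper does, and the route you sketch (Riesz potential $u=I_{1/2}g$, O'Neil's rearrangement of convolutions, logarithmic change of variables, Adams' one-dimensional calculus lemma) is precisely the Adams--Fontana--Morpurgo mechanism underlying the cited reference, including the identification of the constant $\pi$ from the $L^2$-mass of the exponential-variable kernel. The sharpness construction via a concentrated Moser-type sequence is also the standard one.

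Two points in the ``upgrade to the quadratic bound'' step need to be tightened, though neither is fatal. First, the calculus lemma by itself yields the uniform bound $\int e^{w^2-r}\,dr\le C$, not the relative bound $\int(\exp(\beta w^2)-1)e^{-r}\,dr\le C\int w^2 e^{-r}\,dr$; the latter really comes from the level-set splitting you then describe, so the two arguments should not be presented as one implying the other. Second, after rescaling $\{|x|<R\}$ to $\{|y|<1\}$ the function $v=u(R\,\cdot)$ is \emph{not} supported in $[-1,1]$, so the ``bounded-domain fractional Adams inequality'' does not apply to $v$ as written. The fix is to truncate: since $v$ is symmetric decreasing with $v(\pm1)=M$, the function $(v-M)_+$ \emph{is} supported in $[-1,1]$, and because $t\mapsto(t-M)_+$ is $1$-Lipschitz the Gagliardo seminorm characterization of $\|(-\Delta)^{1/4}\cdot\|_{L^2}$ (valid for fractional order $s=\tfrac12<1$) gives $\|(-\Delta)^{1/4}(v-M)_+\|_2\le\|(-\Delta)^{1/4}v\|_2\le 1$. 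One then applies the bounded-domain inequality to $(v-M)_+$, absorbs the offset $M$ (with $M$ fixed, say $M=1$) via $v^2\le(1+\delta)(v-M)_+^2+C_\delta M^2$, and this is exactly where the strict slack $\beta<\pi$ is consumed. With these repairs your sketch is a correct account of the standard proof; it simply needs to make explicit the contraction property that compensates for the absence of a P\'olya--Szeg\"o principle and for the nonlocality of the operator.
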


\begin{lemma} \label{addlem2}
There exists extremals for the above subcritical fractional Adams inequality \eqref{addintq1}. Furthermore, all the extremals of inequality \eqref{addintq1}
must be radially symmetric with respect to some point $x_0 \in \mathbb{R}$.
\end{lemma}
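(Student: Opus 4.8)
The plan is a concentration--compactness argument for a maximizing sequence, followed by a Fourier-rearrangement argument for the symmetry. Write $C_\beta$ for the (finite, by Lemma~\ref{addlem1}) supremum of the quotient in \eqref{addintq1}; from $e^t-1>t$ for $t>0$ one has $C_\beta>\beta$. Take $u_k\in W^{\frac12,2}(\mathbb R)$ with $\|(-\Delta)^{\frac14}u_k\|_2=1$ for which the quotient in \eqref{addintq1} tends to $C_\beta$. This quotient is invariant under translations, under $u\mapsto|u|$ (the Gagliardo seminorm does not increase), and under the dilation $u\mapsto u(\lambda\,\cdot)$, which in one dimension preserves $\|(-\Delta)^{\frac14}\cdot\|_2$ and scales $\int|u|^q$ by $\lambda^{-1}$; so, replacing $u_k$ by a suitable dilate of $|u_k|$, we may assume $u_k\ge0$ and $\|u_k\|_2=1$, hence $(u_k)$ is bounded in $W^{\frac12,2}(\mathbb R)$. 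First I would exclude vanishing: if $\sup_{y\in\mathbb R}\int_{B_1(y)}u_k^2\,dx\to0$, then $u_k\to0$ in $L^q(\mathbb R)$ for every $q\in(2,\infty)$ (vanishing lemma), and expanding $\int_{\mathbb R}(\exp(\beta u_k^2)-1)\,dx=\sum_{j\ge1}\tfrac{\beta^j}{j!}\|u_k\|_{2j}^{2j}$ and controlling the tail of this series uniformly via Lemma~\ref{addlem1} with an exponent $\beta'\in(\beta,\pi)$ gives $\int_{\mathbb R}(\exp(\beta u_k^2)-1)\,dx\to\beta$, contradicting $C_\beta>\beta$. Hence, after a translation, $u_k\rightharpoonup u\neq0$ in $W^{\frac12,2}(\mathbb R)$, with $u_k\to u$ in $L^2_{\mathrm{loc}}$ and a.e.

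The crux is ruling out dichotomy. If the masses $\mu_k=(u_k^2+|(-\Delta)^{\frac14}u_k|^2)\,dx$ split, write $u_k=v_k+w_k$ with the two profiles running infinitely far apart, $\|(-\Delta)^{\frac14}v_k\|_2^2\to a\in[0,1]$, $\|(-\Delta)^{\frac14}w_k\|_2^2\to 1-a$, $\|v_k\|_2^2\to b$, $\|w_k\|_2^2\to 1-b$; since the Gagliardo seminorm splits asymptotically and the (near-)supports become disjoint, $\int(\exp(\beta u_k^2)-1)=\int(\exp(\beta v_k^2)-1)+\int(\exp(\beta w_k^2)-1)+o(1)$. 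For $0<a<1$, applying Lemma~\ref{addlem1} to $v_k/\|(-\Delta)^{\frac14}v_k\|_2$ together with the \emph{strict} convexity inequality $e^{t/a}-1>\tfrac1a(e^t-1)$ for $t>0$, used quantitatively (one checks $\sup_{t\ge0}\tfrac{e^t-1-t}{e^{t/a}-1-\frac1a(e^t-1)}<\infty$), forces $\limsup_k\int(\exp(\beta v_k^2)-1)\le \theta_a\,b$ with $\theta_a$ strictly between $\beta$ and $C_\beta$, and likewise $\limsup_k\int(\exp(\beta w_k^2)-1)\le\theta_{1-a}(1-b)$ with $\theta_{1-a}<C_\beta$; adding these contradicts $\int(\exp(\beta u_k^2)-1)\to C_\beta$. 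The degenerate cases $a\in\{0,1\}$ follow similarly, since a profile with vanishing seminorm but positive $L^2$-mass contributes only its quadratic part (Gagliardo--Nirenberg and the series-tail bound), strictly lowering the total as $C_\beta>\beta$. Thus the maximizing sequence is compact: $u_k\to u$ in $L^2(\mathbb R)$ and a.e., with $\|(-\Delta)^{\frac14}u\|_2\le1$. Since $\{\exp(\beta u_k^2)-1\}$ is equi-integrable with uniformly small tails (Lemma~\ref{addlem1} with exponent $\beta'$, H\"older, and $\|u_k-u\|_2\to0$), Vitali's theorem gives $\int(\exp(\beta u^2)-1)=\lim_k\int(\exp(\beta u_k^2)-1)=C_\beta$ while $\|u\|_2=1$; and if $\|(-\Delta)^{\frac14}u\|_2=c<1$, then $u/c$ is admissible with quotient $c^2\int(\exp(\beta u^2/c^2)-1)>\int(\exp(\beta u^2)-1)=C_\beta$ (strict convexity), which is impossible, so $c=1$ and $u$ is an extremal.

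For the symmetry, let $u$ be an arbitrary extremal. Since $|u|$ is again admissible with the same value of the quotient, equality must hold in $\|(-\Delta)^{\frac14}|u|\|_2\le\|(-\Delta)^{\frac14}u\|_2$, equivalently in the Gagliardo seminorm, which forces $u$ to have constant sign; so WLOG $u\ge0$. Let $u^{\sharp}$ be its Fourier rearrangement, $\widehat{u^{\sharp}}=\big(|\widehat u|\big)^{*}$ (symmetric decreasing rearrangement). Then $\|u^{\sharp}\|_2=\|u\|_2$ (Plancherel), $\|(-\Delta)^{\frac14}u^{\sharp}\|_2\le\|(-\Delta)^{\frac14}u\|_2$ (Hardy--Littlewood with the increasing weight $|\xi|$), and $\|u^{\sharp}\|_{2j}\ge\|u\|_{2j}$ for every $j\ge1$ (Brascamp--Lieb--Luttinger applied to $\widehat u\ast\cdots\ast\widehat u$, together with $|\widehat u\ast\cdots\ast\widehat u|\le|\widehat u|\ast\cdots\ast|\widehat u|$); hence $\int(\exp(\beta (u^{\sharp})^2)-1)=\sum_j\tfrac{\beta^j}{j!}\|u^{\sharp}\|_{2j}^{2j}\ge\int(\exp(\beta u^2)-1)$. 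Since $u$ is an extremal, $u^{\sharp}$ must be one as well and all these inequalities are equalities. Equality in the Hardy--Littlewood step gives $|\widehat u|=(|\widehat u|)^{*}$, so $u$ and $u^{\sharp}$ have the same Fourier modulus and differ by a phase $m$ with $|m|=1$; equality $\|u^{\sharp}\|_4=\|u\|_4$, i.e.\ $\big\|\,|\widehat u|\ast|\widehat u|\,\big\|_2=\big\|\widehat u\ast\widehat u\big\|_2$, together with $m(-\xi)=\overline{m(\xi)}$ (reality of $u$), pins down $m(\xi)=\pm e^{ix_0\xi}$ for some $x_0\in\mathbb R$. Therefore $u(x)=\pm u^{\sharp}(x+x_0)$, and since $u^{\sharp}$ is even, $u$ is radially symmetric about $-x_0$.

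The main obstacle is the exclusion of dichotomy, i.e.\ the compactness of the maximizing sequence: the naive splitting inequality only returns $C_\beta\le C_\beta$, so one has to extract genuine strictness, and this comes precisely from the strict convexity of $t\mapsto e^t$ (an escaping sub-profile contributes at most its quadratic part). This is exactly where the subcritical hypothesis $\beta<\pi$ is essential — it also underlies the uniform series-tail and equi-integrability estimates — and the scheme would break down at the borderline $\beta=\pi$.
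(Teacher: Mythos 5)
Your proof is correct in substance and takes a genuinely different route from the one the paper points to. The paper proves the three-dimensional analogue (Lemma~\ref{lem2}) by first applying the Fourier rearrangement to produce a radial maximizing sequence and then invoking the Strauss-type compact embedding of radial functions $W^{\frac{3}{2},2}_{r}(\mathbb{R}^{3})\hookrightarrow L^{p}(\mathbb{R}^{3})$, $p>2$, to pass to the limit in $\int(\exp(\beta|v_k|^{2})-1-\beta|v_k|^{2})\,dx$; for Lemma~\ref{addlem2} the paper simply cites \cite{CLZ} and remarks that the Lemma~\ref{lem2} technique also works. Your argument instead runs Lions' concentration--compactness on the original sequence: you rule out vanishing via the $L^q$-vanishing lemma combined with a uniform tail estimate for the exponential series (powered by Lemma~\ref{addlem1} at a larger exponent $\beta'\in(\beta,\pi)$), and you rule out dichotomy by the quantitative strict-convexity bound --- explicitly, $a(e^t-1)-(e^{at}-1)\ge a(1-a)(e^t-1-t)$ gives $\int(\exp(\beta v_k^2)-1)\le[aC_\beta+(1-a)\beta]\,b$ for a sub-profile carrying seminorm $a$ and mass $b$, and summing two such bounds yields $C_\beta\le C_\beta s+\beta(1-s)$ with $s=ab+(1-a)(1-b)<1$, a contradiction. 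The symmetry step (Fourier rearrangement, equality in Hardy--Littlewood forcing $|\widehat{u}|$ symmetric-decreasing, equality in $\|u\|_4$ pinning the phase to $\pm e^{ix_0\xi}$) is essentially the same Lenzmann--Sok characterization used in the paper's proof of Lemma~\ref{lem2}. What your route buys: the one-dimensional case is exactly where the Strauss compact embedding of ``radial'' (i.e.\ merely even) functions into $L^p$ fails, since evenness gives no decay at infinity in $\mathbb{R}$; a literal transplantation of the Lemma~\ref{lem2} argument would therefore leave a gap at the compactness step, whereas your concentration--compactness avoids any reliance on radial compactness. The two spots where your write-up is compressed --- the approximate additivity of the nonlocal seminorm under dichotomy (cross terms of the Gagliardo double integral must be shown to vanish for cut-off profiles running apart), and the degenerate splittings $a\in\{0,1\}$ --- are handled by standard Gagliardo--Nirenberg interpolation and the same tail estimates and should be spelled out, but they do not affect correctness.
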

\begin{remark}
The proof can be found in \cite{CLZ}. One can also use the technique of proof of Lemma \ref{lem2} to prove Lemma \ref{addlem2}, then we omit the details.
\end{remark}
\begin{lemma}\label{addlem3}(\cite{Rudin},\cite{Siegel})
Given a function $u\in W^{\frac{1}{2},2}(\mathbb{R})$.  If we assume that function $U\in W^{2,2}(\overline{\mathbb{R}^{2}_{+}})$ satisfying
$$-\Delta U(x,y)=0$$
on the upper half space $\mathbb{R}^{2}_{+}$ and the boundary condition
$$U(x,y)|_{y=0}=u(x),$$ then we have the following identity
\begin{equation}\label{identity}
\int_{\mathbb{R}^2_{+}}|\nabla U(x,y)|^2dxdy=\int_{\mathbb{R}}|(-\Delta)^{\frac{1}{4}}u|^2dx.
\end{equation}
\end{lemma}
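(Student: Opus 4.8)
The plan is to pass to the Fourier transform in the tangential variable and use the explicit form of the bounded harmonic extension. Let $\widehat{\,\cdot\,}$ denote the Fourier transform in $x\in\mathbb{R}$, normalized so that Plancherel holds isometrically and $\widehat{(-\Delta)^{1/4}u}(\xi)=|\xi|^{1/2}\widehat u(\xi)$. Applying $\widehat{\,\cdot\,}$ in $x$ to $-\Delta U=\partial_x^2U+\partial_y^2U=0$ turns it, for a.e.\ fixed $\xi$, into the ordinary differential equation $\partial_y^2\widehat U(\xi,y)=|\xi|^2\widehat U(\xi,y)$ on $(0,\infty)$, whose general solution is $\widehat U(\xi,y)=A(\xi)e^{-|\xi|y}+B(\xi)e^{|\xi|y}$. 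Since $U$ lies in $W^{2,2}(\overline{\mathbb{R}^2_+})$, it in particular has finite Dirichlet energy; expressing $|\widehat{\partial_xU}(\xi,y)|^2+|\widehat{\partial_yU}(\xi,y)|^2$ and using Plancherel in $x$ for each $y$, the finiteness of $\int_0^\infty\!\int_{\mathbb{R}}|\nabla U|^2$ forces $B(\xi)=0$ for a.e.\ $\xi$, since any nontrivial $e^{|\xi|y}$ contribution makes the integral in $y$ diverge. The boundary condition $U(\cdot,0)=u$ then gives $A(\xi)=\widehat u(\xi)$, so $\widehat U(\xi,y)=\widehat u(\xi)e^{-|\xi|y}$; equivalently, $U$ is the Poisson extension of $u$.

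With this representation the energy computation is immediate. From $\widehat U(\xi,y)=\widehat u(\xi)e^{-|\xi|y}$ one has $\partial_y\widehat U(\xi,y)=-|\xi|\widehat u(\xi)e^{-|\xi|y}$, hence
\begin{equation*}
|\widehat{\partial_xU}(\xi,y)|^2+|\widehat{\partial_yU}(\xi,y)|^2=2|\xi|^2|\widehat u(\xi)|^2e^{-2|\xi|y}.
\end{equation*}
Applying Plancherel in $x$ for each fixed $y>0$ and then integrating in $y$ over $(0,\infty)$ (by Tonelli, using $\int_0^\infty e^{-2|\xi|y}\,dy=\tfrac{1}{2|\xi|}$) yields
\begin{equation*}
\int_{\mathbb{R}^2_+}|\nabla U(x,y)|^2\,dx\,dy=\int_{\mathbb{R}}|\xi|\,|\widehat u(\xi)|^2\,d\xi=\int_{\mathbb{R}}\big||\xi|^{1/2}\widehat u(\xi)\big|^2\,d\xi=\int_{\mathbb{R}}|(-\Delta)^{1/4}u|^2\,dx,
\end{equation*}
which is the asserted identity; the factor $2$ from the two components of the gradient exactly cancels the $\tfrac{1}{2|\xi|}$ produced by the $y$-integration, so the constant is precisely $1$.

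The only delicate points are functional-analytic rather than computational: one must make rigorous sense of the trace $U(\cdot,0)=u$ for $U$ in the stated Sobolev class, justify that the Fourier transform intertwines $-\Delta U=0$ with the pointwise ODE above (for instance by testing against Schwartz functions in $x$), and confirm that finiteness of the Dirichlet integral genuinely excludes the branch $e^{|\xi|y}$. Since both sides of the claimed identity are continuous quadratic forms on $W^{1/2,2}(\mathbb{R})$, a clean alternative is to establish the identity first for $u\in\mathcal{S}(\mathbb{R})$---where every manipulation is transparent---and then extend by density; this is exactly the classical content recorded in the cited references, which may simply be invoked.
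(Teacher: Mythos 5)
The paper does not prove this lemma; it simply cites \cite{Rudin} and \cite{Siegel} for the identity, so there is no internal argument to compare against. Your proof is the standard (and correct) one: Fourier-transform in $x$ to reduce the harmonicity to the ODE $\partial_y^2\widehat U=|\xi|^2\widehat U$, discard the exponentially growing branch by finiteness of the Dirichlet energy (Tonelli in $y$ shows any nonzero $B(\xi)$ makes $\int_0^\infty|\widehat{\partial_y U}(\xi,y)|^2\,dy$ diverge), identify $\widehat U(\xi,y)=\widehat u(\xi)e^{-|\xi|y}$ from the boundary condition, and then compute the energy with Plancherel, the factor $2$ from the two gradient components cancelling the $\tfrac{1}{2|\xi|}$ from $\int_0^\infty e^{-2|\xi|y}\,dy$. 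Your closing remark about establishing the identity first on Schwartz functions and extending by density is exactly the right way to make the trace and the ODE reduction rigorous; this is what the cited references supply. One peripheral observation: the paper's hypothesis $U\in W^{2,2}(\overline{\mathbb{R}^2_+})$ is stronger than what your argument (or the statement) needs---finite Dirichlet energy, i.e.\ $U\in W^{1,2}$, suffices---and is presumably a typo carried over from the higher-order version.
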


Now, we are in the position to prove Theorem \ref{addthm1}.
\medskip

\emph{Proof of Theorem \ref{addthm1}:}
For any $U\in W^{1,2}(\overline{\mathbb{R}^2_{+}})$, we define $V(x,y)$ as the harmonic extension of $U(x,0)$, that is to say $V(x,y)$ satisfies that
$$-\Delta V(x,y)=0$$
on the upper half space $\mathbb{R}^{2}_{+}$ and the boundary condition
$$V(x,y)|_{y=0}=U(x,y)|_{y=0}.$$
Through Green's representation formula, we can write $$V(x,y)=\frac{1}{\pi}\int_{\mathbb{R}}\frac{y}{(|x-\widetilde{x}^2|+y^2)}U(\widetilde{x},0)d\widetilde{x}.$$
According to the Dirichlet principle, we know that
\begin{equation}\label{Dirichletp}
\int_{\mathbb{R}^2_{+}}|\nabla V(x,y)|^2dxdy\leq \int_{\mathbb{R}^2_{+}}|\nabla U(x,y)|^2dxdy.
\end{equation}
Combining (\ref{Dirichletp}) and (\ref{identity}) and Lemma \ref{addlem1}, we obtain
\begin{equation}\begin{split}
\int_{\partial\mathbb{R}^{2}_{+}}(\exp(\beta|U(x,0)|^{2})-1)dx&=\int_{\partial\mathbb{R}^{2}_{+}}(\exp(\beta|V(x,0)|^{2})-1)dx\\
&\leq C\int_{\partial\mathbb{R}^{2}_{+}}|V(x,0)|^{2}dx= C\int_{\partial\mathbb{R}^{2}_{+}}|U(x,0)|^{2}dx.
\end{split}\end{equation}
The sharpness of the above inequality can be deduced from the sharpness of the sharp subcritical inequalities \eqref{addintq1}. In fact, we can pick the test function $u_k\in W^{\frac{1}{2},2}(\mathbb{R})$ satisfying $\|(-\Delta)^{\frac{1}{4}}u_k\|_{2}\leq 1$ such that  $$\lim\limits_{k\rightarrow \infty}\frac{\int_{\mathbb{R}}(\exp(\pi|u_k|^{2})-1)dx}{\|u_k\|_2^2}=\infty.$$
Define $U_k(x,y)$ as the harmonic extension of $u_k$, through Lemma \ref{addlem3}, we know that
$\int_{\mathbb{R}^2_{+}}|\nabla U_k(x,y)|^2dxdy=\int_{\mathbb{R}}|(-\Delta)^{\frac{1}{4}}u_k|^2dx\leq 1$, then it follows that
$$\lim\limits_{k\rightarrow\infty}\frac{\int_{\partial\mathbb{R}^2_{+}}(\exp(\pi|U_k|^{2})-1)dx}{ \int_{\partial \mathbb{R}_{+}^{2}}|U_{k}(x, 0)|^{2} dx}=\lim\limits_{k\rightarrow \infty}\frac{\int_{\mathbb{R}}(\exp(\pi|u_k|^{2}))-1)dx}{\|u_k\|_2^2}=\infty,$$
which implies  the sharpness of the inequality \eqref{addsub}.
\vskip 0.1cm

Next, we show the existence of extremals for subcritical trace Trudinger-Moser inequalities. According to Lemma \ref{addlem2}, we know for any $\beta<\pi$, there exists $u_0(x)\in W^{\frac{1}{2},2}(\mathbb{R})$ satisfying $\|(-\Delta)^{\frac{1}{4}}u_0\|_2^2=1$ such that
$$\frac{\int_{\mathbb{R}}[\exp(\beta|u_0|^2)-1]dx}{\int_{\mathbb{R}}|u_0|^2dx}=\sup_{u\in  W^{\frac{1}{2},2}(\mathbb{R}),\|(-\Delta)^{\frac{1}{4}}u\|_2^2=1}\frac{\int_{\mathbb{R}}[\exp(\beta|u|^2)-1]dx}{\int_{\mathbb{R}}|u|^2dx}.$$
Define $U(x,y)$ as the harmonic extension of $u_0(x)$, through Lemma \ref{addlem3}, we see that $U(x,y)$ is an extremal function for \eqref{addsub}.
\vskip0.1cm

Finally, we prove that the extremals of the inequality \eqref{addsub} must be a harmonic extension of
some radial function $u$ in $W^{\frac{1}{2},2}(\mathbb{R})$. In fact, assume that $U(x,y)$ is the extremal function of the inequality \eqref{addsub},
we define $W(x,y)$ as the harmonic extension of $U(x,0)$ and can easily check that $W(x,y)$ is also the extremal function of the inequality with the
$$\int_{\mathbb{R}^2_{+}}|\nabla U|^2dxdy=\int_{\mathbb{R}^2_{+}}|\nabla W|^2dxdy=1.$$
According to the Dirichlet principle, we know that $\int_{\mathbb{R}^2_{+}}|\nabla U|^2dxdy=\int_{\mathbb{R}^2_{+}}|\nabla W|^2dxdy$ if and only if
$U=W$. Furthermore, it is also easy to check that $U(x,0)$ is the extremal functions of the inequality \eqref{addintq1}. According to the Lemma \ref{addlem2}, the $U(x,0)$ must be radially symmetric with respect to some point $x_0 \in \mathbb{R}$. Hence the extremals of the \eqref{addsub} must be a harmonic extension of some radial function.
\medskip

\emph{Proof of Theorem \ref{addthm2}:}
For any $U\in W^{1,2}(\overline{\mathbb{R}^2_{+}})$ satisfying $$\int_{\mathbb{R}^2_{+}}|\nabla U(x,y)|^2dxdy+\int_{\partial\mathbb{R}^2_{+}}|U(x,0)|^2dx\leq 1,$$  we define $V(x,y)$ as the harmonic extension of $U(x,0)$.
By the Dirichlet principle and Lemma \ref{addlem2}, we derive that $\int_{\partial\mathbb{R}^2_{+}}(|(-\Delta)^{\frac{1}{4}}V(x,0)|^2+|V(x,0)|^2)dx\leq 1$. With the help of the critical fractional Trudinger-Moser inequality in $W^{\frac{1}{2},2}(\mathbb{R})$ (see \cite{Iula}), we conclude that
\begin{equation}\begin{split}
\int_{\partial\mathbb{R}^{2}_{+}}(\exp(\pi|U(x,0)|^{2})-1)dx&=\int_{\partial\mathbb{R}^{2}_{+}}(\exp(\pi|V(x,0)|^{2})-1)dx\leq C,\\
\end{split}\end{equation}
which accomplishes the proof of the inequality \eqref{addcri}. The existence of extremal of the inequality \eqref{addcri} relies on the existence of the critical fractional Trudinger-Moser inequality in $W^{\frac{1}{2},2}(\mathbb{R})$ which was recently established by Mancini and Martinazzi in \cite{MiMar}. The proof is similar to the proof of the existence of extremals of the subcritical trace Trudinger-Moser inequalities, we omit the details here.
\medskip

\section{Trace Adams inequalities on the half space $\mathbb{R}^{4}_{+}$}
In this section, we are devoted to establishing the sharp trace Adams inequalities and the existence of their extremal functions. The method is based on the bi-harmonic, Fourier rearrangement of \cite{Lenza} and sharp fractional Adams inequalities. We note that he method of Fourier rearrangement to establish the existence of extremals to Adams inequalities have  been used earlier in \cite{CLZ1} and \cite{CLZhu}.

For this purpose, we need the following lemma.
\begin{lemma}\label{lem1}(\cite{Fon})
For $0<\beta<6\pi^2$, then there exists a positive constant $C$ such that for all functions $u \in W^{\frac{3}{2},2}(\mathbb{R}^3)$ with $\|(-\Delta)^{\frac{3}{4}} u\|_{2}=1$, the following inequality holds.
\begin{equation}\label{intq1}
\int_{\mathbb{R}^{3}}(\exp(\beta|u|^{2})-1)dx
\leq C\int_{\mathbb{R}^3}|u|^{2}dx,
\end{equation}
 Moreover, the constant $6\pi^2$ is sharp in the sense that the inequality fails if the constant $\beta$ is replaced by any $\beta\geq6\pi^2$.
\end{lemma}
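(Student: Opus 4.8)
\emph{Proof plan.} Inequality \eqref{intq1} is the case $n=3$ of the subcritical fractional Adams inequality in $W^{n/2,2}(\mathbb{R}^n)$; here $\beta(3,\tfrac{3}{2})=6\pi^{2}$, consistent with the trace constant $\widetilde{\beta_2}=12\pi^{2}$ since $\widetilde{\beta_2}=\frac{\Gamma(2)\Gamma(1/2)}{\Gamma(3/2)}\,\beta(3,\tfrac{3}{2})$. The cleanest route is to deduce \eqref{intq1} from the exact-growth inequality of Theorem \ref{fratru} with $n=3$, whose proof rests on Beckner's fractional Hardy--Rellich inequality and the reduction of orders and does \emph{not} use \eqref{intq1}, so that there is no circularity. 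Fix $\beta<6\pi^{2}$. An elementary one-variable estimate provides $C=C(\beta)$ with
\begin{equation*}
e^{\beta t^{2}}-1\ \le\ C\Bigl(\frac{e^{6\pi^{2}t^{2}}-1}{(1+t)^{2}}+t^{2}\Bigr),\qquad t\ge 0,
\end{equation*}
because the quotient of the two sides is continuous on $[0,\infty)$, tends to $\beta/(6\pi^{2}+1)$ as $t\to0$ (both sides vanish to second order) and behaves like $t^{2}e^{(\beta-6\pi^{2})t^{2}}\to0$ as $t\to\infty$, the right side dominating $t^{-2}e^{6\pi^{2}t^{2}}$; hence it is bounded. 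Taking $t=|u(x)|$, integrating over $\mathbb{R}^{3}$, and applying Theorem \ref{fratru} (so that $\int_{\mathbb{R}^{3}}\frac{e^{6\pi^{2}|u|^{2}}-1}{(1+|u|)^{2}}\,dx\le C\int_{\mathbb{R}^{3}}|u|^{2}\,dx$ whenever $\|(-\Delta)^{3/4}u\|_{2}=1$) gives \eqref{intq1} at once.

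\emph{A self-contained alternative.} If one prefers to avoid Theorem \ref{fratru}, the Adams--O'Neil scheme applies. Put $f=(-\Delta)^{3/4}u$ with $\|f\|_{2}=1$, so that $u=G\ast f$ where $G(x)=\gamma_{3}|x|^{-3/2}$ is the Riesz kernel of order $\tfrac{3}{2}$ on $\mathbb{R}^{3}$, with decreasing rearrangement $G^{*}(s)=\gamma_{3}\sqrt{4\pi/3}\,s^{-1/2}$; since the order $\tfrac{3}{2}$ exceeds $1$, the P\'olya--Szeg\"o inequality is unavailable and one argues directly through the convolution. O'Neil's rearrangement lemma gives
\begin{equation*}
u^{**}(t)\ \le\ 2\gamma_{3}\sqrt{4\pi/3}\,t^{1/2}f^{**}(t)+\gamma_{3}\sqrt{4\pi/3}\int_{t}^{\infty}s^{-1/2}f^{*}(s)\,ds,
\end{equation*}
the first term being $\le 2\gamma_{3}\sqrt{4\pi/3}$ by Cauchy--Schwarz. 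Using $\int_{\mathbb{R}^{3}}(e^{\beta u^{2}}-1)\,dx=\int_{0}^{\infty}(e^{\beta u^{*}(t)^{2}}-1)\,dt$ together with $u^{*}\le u^{**}$, one splits this integral into the region where $u^{**}(t)$ lies below a large fixed level---where $e^{\beta u^{2}}-1\le C_{\beta}u^{2}$, which is absorbed into $\int|u|^{2}$---and its complement, on which the Adams one-dimensional lemma applied to the profile $s^{-1/2}$ yields the threshold $\beta<6\pi^{2}$ and the linear majorant $\int_{\mathbb{R}^{3}}|u|^{2}\,dx$.

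\emph{Sharpness, and the main obstacle.} To show $6\pi^{2}$ cannot be replaced by any $\beta\ge6\pi^{2}$, I would use the fractional Moser family: functions $u_{k}\in W^{3/2,2}(\mathbb{R}^{3})$ concentrating at a point, of height $\asymp(\log k)^{1/2}$ on a ball of radius $\asymp 1/k$ and with logarithmic decay outside, normalized so that $\|(-\Delta)^{3/4}u_{k}\|_{2}\to1$; then $\int_{\mathbb{R}^{3}}(e^{\beta u_{k}^{2}}-1)\,dx$ does not tend to $0$ while $\int_{\mathbb{R}^{3}}|u_{k}|^{2}\,dx\to0$ once $\beta\ge6\pi^{2}$, so the ratio in \eqref{intq1} is unbounded. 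The delicate point of the self-contained argument is the borderline tail $\int_{t}^{\infty}s^{-1/2}f^{*}(s)\,ds$: Cauchy--Schwarz against $\int_{t}^{\infty}s^{-1}\,ds=\infty$ does not close, so $u^{*}$ is not uniformly bounded, and recovering simultaneously the exact constant $6\pi^{2}$ and the linear $L^{2}$ majorant forces an Adachi--Tanaka-type cut at a scale governed by $\|u\|_{2}$; this, combined with the breakdown of rearrangement inequalities at order greater than $1$, is precisely why the deduction from the exact-growth inequality is the neatest way to establish \eqref{intq1}.
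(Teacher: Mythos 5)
The paper does not prove Lemma \ref{lem1}; it is cited directly from Fontana--Morpurgo~\cite{Fon}, so any argument you supply is necessarily ``different'' from the paper. Your primary route---deducing \eqref{intq1} from Theorem \ref{fratru}---is a valid and rather clean observation. The pointwise bound
\[
e^{\beta t^{2}}-1\;\le\;C\Bigl(\frac{e^{6\pi^{2}t^{2}}-1}{(1+t)^{2}}+t^{2}\Bigr),\qquad t\ge 0,
\]
holds for $\beta<6\pi^{2}$ exactly as you argue: both sides vanish to order $t^{2}$ at the origin, the ratio is continuous, and the ratio decays like $t^{2}e^{(\beta-6\pi^{2})t^{2}}$ at infinity. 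Integrating with $t=|u(x)|$ and invoking Theorem \ref{fratru} under the same constraint $\|(-\Delta)^{3/4}u\|_{2}=1$ gives \eqref{intq1}. You are also right that there is no circularity: the paper's Section 6 proof of Theorem \ref{fratru} proceeds via Beckner's fractional Hardy--Rellich inequality (Lemma \ref{rad}), the Fourier rearrangement comparison of Lemma \ref{cont}, a radial change of variables to a two-dimensional problem, and the Ibrahim--Masmoudi--Nakanishi exact-growth inequality on $\mathbb{R}^{2}$ cited as \cite{IMN}---none of which relies on \eqref{intq1}. Your check of the constants ($\beta(3,\tfrac32)=6\pi^{2}$ and $\widetilde{\beta_2}=12\pi^{2}$) is correct. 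The trade-off is that this route inverts the usual logical order (exact-growth is normally obtained as a refinement of the subcritical inequality, not the other way around), and it imports the $2$-dimensional exact-growth theorem as a black box; the O'Neil route you sketch is closer to how Fontana--Morpurgo actually prove it from scratch.

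Two points are left open. First, your proof of \eqref{intq1} covers only the validity direction; the sharpness claim (failure for $\beta\ge 6\pi^{2}$) is only gestured at with ``a fractional Moser family.'' You could make this concrete by reusing the truncated Riesz profiles $\phi_{\varepsilon,r}$ that the paper itself borrows from \cite{Fon} in Section 6, or simply cite \cite{Fon} for that half, but as written the sharpness is not established. Second, your ``self-contained alternative'' via O'Neil is a correct setup (the identification $u=G*f$, the rearrangement $G^{*}(s)=cs^{-1/2}$, the $u^{**}$ estimate, and the observation that the convolution tail is the genuinely borderline term are all right), but you yourself flag, accurately, that the tail integral $\int_{t}^{\infty}s^{-1/2}f^{*}(s)\,ds$ requires the full Adams one-dimensional lemma plus a cut-off at a scale set by $\|u\|_{2}$, and you do not carry that through. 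So that portion should be regarded as an outline, not a proof.
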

Next, we will prove the following
\begin{lemma} \label{lem2}
There exists an extremal for the above subcritical fractional Adams inequality \eqref{intq1}. Furthermore, all the extremals of inequality \eqref{intq1}
must be radially symmetric with respect to some point $x_0 \in \mathbb{R}^3$.
\end{lemma}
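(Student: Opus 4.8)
The statement to be proved is that the subcritical fractional Adams supremum
\[
S_\beta=\sup_{u\in W^{\frac{3}{2},2}(\mathbb{R}^3),\ \|(-\Delta)^{\frac{3}{4}}u\|_2=1}\frac{\int_{\mathbb{R}^3}(\exp(\beta|u|^2)-1)\,dx}{\int_{\mathbb{R}^3}|u|^2\,dx}
\]
is attained, and every extremal is radial about some point. The plan is to run the standard direct-method compactness argument for subcritical Trudinger–Moser/Adams functionals, but to replace the usual Schwarz-symmetrization step (which is unavailable here because $(-\Delta)^{3/4}$ does not interact well with spherically decreasing rearrangement in dimension $3$) by the Fourier rearrangement of Lenzmann–Sok, following the method in \cite{Lenza,CLZ1,CLZhu}. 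The key property to exploit is that for the Fourier rearrangement $u^\sharp$ (defined via $\widehat{u^\sharp}=(|\widehat u|)^*$, the spherically decreasing rearrangement of $|\widehat u|$), one has the norm identity $\|(-\Delta)^{3/4}u^\sharp\|_2=\|(-\Delta)^{3/4}u\|_2$, the inequality $\|u^\sharp\|_p\ge\|u\|_p$ for all $2\le p\le\infty$ (hence the numerator does not decrease and the denominator $\|u\|_2$ is preserved), and $u^\sharp$ is real-valued, even, and radially symmetric. Thus it suffices to take a maximizing sequence consisting of radial functions and to prove compactness there.

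First I would set up a maximizing sequence $u_k$ with $\|(-\Delta)^{3/4}u_k\|_2=1$ normalized via Fourier rearrangement so that each $u_k$ is radial (about the origin), real, and even; passing to a subsequence, $u_k\rightharpoonup u_0$ weakly in $W^{\frac32,2}(\mathbb{R}^3)$. Because the sequence lives in the radial subspace, I would invoke the compact embedding $W^{\frac32,2}_{\mathrm{rad}}(\mathbb{R}^3)\hookrightarrow L^q(\mathbb{R}^3)$ for all $2<q<\infty$ (Strauss-type radial lemma / compactness for fractional spaces), giving strong $L^q$ convergence away from the trivial-loss scenario. The next step is to rule out vanishing and to show $u_0\ne 0$: here one uses that $S_\beta>0$ (test with a fixed bump) together with the elementary pointwise expansion $\exp(\beta t)-1=\sum_{j\ge1}\beta^j t^j/j!$, so the numerator is controlled term-by-term by $L^{2j}$ norms; the $j=1$ term is exactly $\beta\|u_k\|_2^2$, and the higher terms are handled by the subcritical fractional Adams inequality \eqref{intq1} of Lemma \ref{lem1}, which gives a uniform bound on $\int(\exp(\beta'|u_k|^2)-1)$ for some $\beta<\beta'<6\pi^2$ and hence equi-integrability of $\exp(\beta|u_k|^2)-1$. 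Equi-integrability plus a.e. convergence (Vitali) yields $\int(\exp(\beta|u_0|^2)-1)=\lim\int(\exp(\beta|u_k|^2)-1)$, while $\|u_k\|_2^2\to\|u_0\|_2^2$ from radial compactness; combined with weak lower semicontinuity $\|(-\Delta)^{3/4}u_0\|_2\le 1$, one deduces that $u_0$ (renormalized, which can only help the ratio) is an extremal.

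Finally, for the symmetry statement: if $u_0$ is any extremal, apply Fourier rearrangement to it. The denominator and the Gagliardo seminorm are unchanged, and the numerator can only increase, so $u_0^\sharp$ is again an extremal with the same seminorm; equality in $\|u_0^\sharp\|_p\ge\|u_0\|_p$ for the relevant $p$ (or, more robustly, equality in the underlying rearrangement inequality used to bound the nonlinearity) forces $\widehat{u_0}$ to be, up to a modulus-one phase and a translation in frequency, a spherically decreasing function — equivalently $u_0$ is a radial function (times a unimodular constant) about some $x_0\in\mathbb{R}^3$; since the functional is real, one may take $u_0$ real and radial. I expect the main obstacle to be the $u_0\ne0$ step, i.e.\ excluding concentration/vanishing of the maximizing sequence in this non-compact setting: one must carefully combine the strict positivity of $S_\beta$, the scaling structure (the ratio is invariant under $u\mapsto u(\lambda\cdot)$ up to the explicit factor this introduces, which can be normalized), and the sharp subcritical bound from Lemma \ref{lem1} to trap the mass at a fixed scale before passing to the limit; this is precisely where the subcriticality $\beta<6\pi^2$, rather than the critical exponent, is essential, exactly as in \cite{CLZ,Iula,CLZ1}.
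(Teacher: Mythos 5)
Your high-level strategy — reduce to a radial maximizing sequence via the Lenzmann--Sok Fourier rearrangement and then run a direct-method compactness argument in $W^{3/2,2}_{\mathrm{rad}}(\mathbb{R}^3)$ — is exactly the paper's approach, and the treatment of the symmetry assertion via the equality case of Fourier rearrangement is also the same. However, there are two genuine gaps in the compactness step.

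First, you assert that $\|u_k\|_2^2\to\|u_0\|_2^2$ ``from radial compactness.'' This is false: the Strauss-type compact embedding $W^{3/2,2}_{\mathrm{rad}}(\mathbb{R}^3)\hookrightarrow L^q(\mathbb{R}^3)$ holds only for $q>2$; there is no compactness at $q=2$ on an unbounded domain, and a weakly convergent radial sequence can lose $L^2$ mass at spatial infinity (the tails are only controlled pointwise by $|x|^{-\delta}$, which is not square-integrable at $\infty$ in $\mathbb{R}^3$). Second, and for the same reason, equi-integrability plus Vitali does not give $\int(\exp(\beta|u_k|^2)-1)\to\int(\exp(\beta|u_0|^2)-1)$: the linearization of $e^{\beta t}-1$ at $t=0$ is $\beta t$, so at spatial infinity the integrand behaves like $\beta|u_k|^2$, precisely the uncontrolled $L^2$-tail. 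The paper circumvents both problems at once by (i) rescaling the Fourier-rearranged sequence so that $\|v_k\|_2=1$ exactly (using the scale covariance $v_k(x)=u_k^\sharp(\|u_k^\sharp\|_2^{2/3}x)$, which leaves both $\|(-\Delta)^{3/4}\cdot\|_2$ and the functional $F_\beta$ unchanged), and (ii) subtracting off the quadratic term, i.e.\ proving convergence only of $\int\Psi(\beta|v_k|^2)\,dx$ with $\Psi(\tau)=e^\tau-1-\tau$, which kills the $L^2$ contribution and \emph{does} pass to the limit by radial compactness in $L^q$, $q>2$. With those two ingredients one writes $\mu_\beta=\beta+\int\Psi(\beta|v|^2)\,dx+o(1)$, then shows $v\neq 0$ using $\mu_\beta>\beta$, and finally shows $\|(-\Delta)^{3/4}v\|_2^2=1$ by the scaling inequality (essentially your ``renormalization can only help'' remark, made quantitative). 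A minor further inaccuracy: Fourier rearrangement gives $\|(-\Delta)^{3/4}u^\sharp\|_2\le\|(-\Delta)^{3/4}u\|_2$, not an identity in general; this is still in your favor, but the equality you invoke for the symmetry conclusion must be \emph{derived} from the maximality of $u$, not assumed.
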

\begin{proof}
Define the sharp constant $\mu_{\beta}$ by
$$\mu_{\beta}:=\sup\limits_{u\in W^{\frac{3}{2},2}(\mathbb{R}^{3}),\|(-\Delta)^{\frac{3}{4}} u\|_{2}=1}F_{\beta}(u),$$
where
$$F_{\beta}(u):=\frac{\int_{\mathbb{R}^{3}}(\exp(\beta|u|^{2})-1)dx}{\|u\|_{2}^{2}}.$$
We first employ the Fourier rearrangement tools to prove that there exists a radially maximizing sequence for $\mu_{\beta}$.
In fact, assume that $(u_k)$ is a maximizing sequence for $\mu_{\beta}$, that is $$\|(-\Delta)^{\frac{3}{4}}u_k\|_{2}=1,\ \  \lim\limits_{k\rightarrow \infty}F_{\beta}(u_k)\rightarrow\mu_{\beta}.$$
Define $u_k^{\sharp}$ by $u_k^{\sharp}=\mathcal{F}^{-1}\{(\mathcal{F}(u_k))^{\ast}\}$, where $\mathcal{F}$ denotes
the Fourier transform on $\mathbb{R}^{3}$ (with its inverse $\mathcal{F}^{-1}$) and
$f^{\ast}$ stands for the Schwarz symmetrization of $f$. Using the property of
the Fourier rearrangement from \cite{Lenza}, one can derive that
\[
\Vert(-\Delta)^{\frac{3}{4}} u_k^{\sharp}\Vert_{2}\leq\Vert(-\Delta)^{\frac{3}{4}} u_k\Vert_{2},\ \Vert u_k^{\sharp
}\Vert_{2}=\Vert u_k\Vert_{2},\ \Vert u_k^{\sharp
}\Vert_{q}\geq \|u_k\|_{q}\ (q>2).
\]
Hence, $\lim\limits_{k\rightarrow \infty}F_{\beta}(u_k)\leq \lim\limits_{k\rightarrow \infty}F_{\beta}(u_k^{\sharp})$, which implies that
$(u^{\sharp}_k)$ is also the maximizing sequence for $\mu_{\beta}$.
Constructing  a new function sequence $(v_k)$ defined by $v_k(x):=u_k(\|u_k\|_{2}^{\frac{2}{3}}x)$ for $x\in \mathbb{R}^{3}$, one can easily verify that
$(v_k)$ is also a maximizing sequence for $\mu_{\beta}$ with $\|(-\Delta)^{\frac{3}{4}} v_k\|_{2}=1$ and $\|v_k\|_{2}=1$.
Note $(v_k)$ is bounded in $W^{\frac{3}{2},2}(\mathbb{R}^{3})$,
up to a sequence, we may assume that
$$ v_k\rightharpoonup v {\rm\ in}\  W^{\frac{3}{2},2}(\mathbb{R}^{3}),$$
thus $v$ satisfies that $\|v\|_2\leq 1$ and $\|(-\Delta)^{\frac{3}{4}}  v\|_2^2\leq1$.
Since $W^{\frac{3}{2},2}_{r}(\mathbb{R}^{3})$ (the collection of all radial functions in $W^{\frac{3}{2},2}(\mathbb{R}^{3})$) can be compactly imbedded into $L^{p}(\mathbb{R}^{3})$ for any $p>2$, we can claim that
 \begin{equation}\begin{split}\label{g.51}
&\lim_{k\rightarrow \infty}\int_{\mathbb{R}^{3}}(\exp(\beta|v_k|^{2})-1-\beta
|v_k|^2)dx=\int_{\mathbb{R}^{3}}(\exp(\beta|v|^{2})-1-\beta
|v|^2)dx.\\
\end{split}\end{equation}
For simplicity, we define $\Psi(\tau):=\exp(\tau)-1-\tau$ and  $\Phi(\tau):=\exp(\tau)-1$ for $\tau>0$,  then we can rewrite \eqref{g.51} as
\begin{equation}\label{a.2}
\lim_{k\rightarrow \infty}\int_{\mathbb{R}^{3}}\Psi(\beta|u_k|^{2})dx=\int_{\mathbb{R}^{3}}\Psi(\beta|u|^{2})dx.
\end{equation}
Hence, it follows from the mean value theorem and the convexity of the function $\Psi$ that
\begin{equation}\begin{split}\label{a.4}
&|\Psi(\beta|u_k|^{2})-\Psi(\beta|u|^{2})|\\
&\ \ \lesssim \Phi(\theta\beta|u_k|^{2}+(1-\theta)\beta|u|^{2})(|u|+|u_k|)|u_k-u|\\
&\ \ \lesssim (|u_k|+|u|)(\Phi(\beta|u_k|^{2})+\Phi(\beta|u|^{2}))|u_k-u|,\\
\end{split}\end{equation}
where $\theta\in [0,1]$.
This together with the fractional Adams inequality  leads to
\begin{equation}\begin{split}\label{a.5}
&|\int_{\mathbb{R}^{3}}(\Psi(\beta|u_k|^{2})-\Psi(\beta|u|^{2}))dx|\\
&\ \ \lesssim\int_{\mathbb{R}^{3}}(|u_k|+|u|)(\Phi(\beta|u_k|^{2})+\Phi(\beta|u|^{2}))|u_k-u|dx\\
&\ \ \lesssim\||u_k|+|u|\|_{L^{a}(\mathbb{R}^{3})}\|\Phi(\beta|u_k|^{2})+\Phi(\beta|u|^{2})\|_{L^b(\mathbb{R}^{n})}\|u_k-u\|_{L^c(\mathbb{R}^{n})}\\
&\ \ \lesssim\|u_k-u\|_{L^c(\mathbb{R}^{3})},
\end{split}\end{equation}
where the constants $b>1$ sufficiently close to $1$ and $\frac{1}{a}+\frac{1}{b}+\frac{1}{c}=1$.
Note that $W^{\frac{3}{2},2}(\mathbb{R}^{3})$ can be compactly imbedded into $L^{r}(\mathbb{R}^{3})$ for any $r>2$, we derive that $$\lim_{k\rightarrow \infty}\int_{\mathbb{R}^{3}}\Psi(\beta|u_k|^{2})dx=\int_{\mathbb{R}^{3}}\Psi(\beta|u|^{2})dx,$$
which accomplishes the proof of \eqref{g.51}.
\vskip0.1cm

Then it follows form \eqref{g.51} that
\begin{equation}\begin{split}\label{26.3.1}
\mu_{\beta}&=F_{\beta}(v_k)+o(1)\\
&=\int_{\mathbb{R}^{3}}(\exp(\beta|v_k|^{2})-1)dx+o(1)\\
&=\beta+\int_{\mathbb{R}^{3}}(\exp(\beta|v_k|^{2})-1-\beta |v_k|^2)dx+o(1)\\
&=\beta+\int_{\mathbb{R}^{3}}(\exp(\beta|v|^{2})-1-\beta |v|^2)dx+o(1).\\
\end{split}\end{equation}
Next, we show $v\neq0$. Indeed, one can pick $u_0$ in $W^{\frac{3}{2},2}(\mathbb{R}^{3})$ satisfying $\|(-\Delta)^{\frac{3}{4}} u_0\|_2=1$ arbitrarily. Then, we have
\begin{equation*}\begin{split}
\mu_{\beta}\geq F_{\beta}(u_0)&=\frac{\int_{\mathbb{R}^{3}}(\exp(\beta|u_0|^2)-1)dx}{\|u_0\|_2^2}\\
&=\frac{\sum_{j=1}^{\infty}\frac{\beta^j}{j!}\|u_0\|_{2j}^{2j}}{\|u_0\|_2^2}\\
&=\beta+\frac{\sum_{j=2}^{\infty}\frac{\beta^j}{j!}\|u_0\|_{2j}^{2j}}{\|u_0\|_2^2}>\beta.\\
\end{split}\end{equation*}
Hence,
\begin{equation*}\begin{split}
\mu_{\beta}&\leq \beta +\frac{\int_{\mathbb{R}^{3}}(\exp(\beta|v|^2)-1-\beta|v|^2)dx}{\|v\|_2^2}\\
&=\frac{\int_{\mathbb{R}^{3}}(\exp(\beta|v|^2)-1)dx}{\|v\|_2^2}=F_{\beta}(v).
\end{split}\end{equation*}
Therefore, it remains to show $\|(-\Delta)^{\frac{3}{4}} v\|_2^2=1$. Recall that $\|(-\Delta)^{\frac{3}{4}} v\|_2^2\leq1$, it suffices to show that $\|(-\Delta)^{\frac{3}{4}} v\|_2^2\geq1$. Through the definition of $\mu_{\beta}$, one can obtain that
\begin{equation}\label{26.2.3}\begin{split}
\mu_{\beta}&\geq F_{\beta}(\frac{v}{\|(-\Delta)^{\frac{3}{4}} v\|_2})\\
&=\sum_{j=1}^{\infty}\frac{\beta^j}{j!}\frac{\| v\|_{2j}^{2j}}{\| v\|_2^2}\|(-\Delta)^{\frac{3}{4}} v\|_2^{2-2j}\\
&\geq\beta+\frac{\beta^2}{2}\frac{\| v\|_{4}^{4}}{\| v\|_2^2}\|(-\Delta)^{\frac{3}{4}} v\|_2^{-2}+
\sum_{j=2}^{\infty}\frac{\beta^j}{j!}\frac{\| v\|_{2j}^{2j}}{\| v\|_2^2}\\
&=F_{\beta}(v)+\frac{\beta^2}{2}\frac{\| v\|_{4}^{4}}{\| v\|_2^2}(\|(-\Delta)^{\frac{3}{4}} v\|_2^{-2}-1)\\
&\geq \mu_{\beta}+\frac{\beta^2}{2}\frac{\| v\|_{4}^{4}}{\| v\|_2^2}(\|(-\Delta)^{\frac{3}{4}} v\|_2^{-2}-1)\\
\end{split}\end{equation}
which implies that $\|(-\Delta)^{\frac{3}{4}} v\|_2^2\geq 1$. Thus, $v$ is a maximizer. Next, we prove that all the extremals of the inequality \eqref{intq1}
must be radially symmetric with respect to some point $x_0 \in \mathbb{R}^3$. Assume that $u$ is a maximizer
for the inequality \eqref{intq1}, we easily see that $u^{\sharp}$ is also a maximizer for the inequality \eqref{intq1} with
$\Vert(-\Delta)^{\frac{3}{4}} u^{\sharp}\Vert_{2}=\Vert(-\Delta)^{\frac{3}{4}} u\Vert_{2}$ and $\|u^{\sharp}\|_{L^q}=\|u\|_{L^q}$ for even $q$. Using the property
of the Fourier rearrangement from \cite{Lenza}, we conclude that
\[
u(x)=e^{i\alpha}u^{\sharp}(x-x_{0})\ \ \mbox{for any}\ x\in\mathbb{R}^{3}%
\]
with some constants $\alpha\in\mathbb{R}$ and $x_{0}\in\mathbb{R}^{3}$. That
is to say that $u$ is radially symmetric and real valued up to translation and
constant phase. Then we accomplish the proof of Lemma \ref{lem2}.
\end{proof}

\begin{lemma}\label{lem3}(\cite{NNP})
Given a function $u\in W^{\frac{3}{2},2}(\mathbb{R}^3)$.  If we assume that function $U\in W^{2,2}(\overline{\mathbb{R}^{4}_{+}})$ satisfying
$$(-\Delta)^2U(x,y)=0$$
on the upper half space $\mathbb{R}^{4}_{+}$ and the boundary condition
$$U(x,0)=u(x),\ \ \partial_{y}U(x,y)|_{y=0}=0,$$ then we have the following identity
\begin{equation}\label{identity1}
\int_{\mathbb{R}^4_{+}}|\Delta U(x,y)|^2dxdy=2\int_{\mathbb{R}^3}|(-\Delta)^{\frac{3}{4}}u|^2dx.
\end{equation}
\end{lemma}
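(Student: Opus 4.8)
The plan is to compute the biharmonic extension $U$ explicitly via its Poisson-type kernel and then evaluate the Dirichlet-type energy $\int_{\mathbb{R}^4_+}|\Delta U|^2\,dxdy$ on the Fourier side. First I would pass to the Fourier transform in the tangential variable $x\in\mathbb{R}^3$: writing $\widehat{U}(\xi,y)$ for the transform of $U(\cdot,y)$, the equation $(-\Delta)^2 U=0$ becomes the ODE $(\partial_y^2-|\xi|^2)^2\widehat U(\xi,y)=0$ in $y>0$, whose bounded solutions are $\widehat U(\xi,y)=\bigl(A(\xi)+B(\xi)y\bigr)e^{-|\xi|y}$. The boundary conditions $U(x,0)=u(x)$ and $\partial_y U(x,y)|_{y=0}=0$ then force $A(\xi)=\widehat u(\xi)$ and $B(\xi)=|\xi|\widehat u(\xi)$, so that
\[
\widehat U(\xi,y)=(1+|\xi|y)\,e^{-|\xi|y}\,\widehat u(\xi).
\]

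Next I would express $\Delta U$ in these terms. Since $\Delta = \Delta_x + \partial_y^2$, on the Fourier side $\widehat{\Delta U}(\xi,y) = (\partial_y^2 - |\xi|^2)\widehat U(\xi,y)$; a direct differentiation of $(1+|\xi|y)e^{-|\xi|y}$ gives $(\partial_y^2-|\xi|^2)\bigl[(1+|\xi|y)e^{-|\xi|y}\bigr] = -2|\xi|^2 e^{-|\xi|y}$, hence $\widehat{\Delta U}(\xi,y) = -2|\xi|^2 e^{-|\xi|y}\widehat u(\xi)$. Then by Plancherel in $x$,
\[
\int_{\mathbb{R}^4_+}|\Delta U|^2\,dxdy
= \int_0^\infty\!\!\int_{\mathbb{R}^3} 4|\xi|^4 e^{-2|\xi|y}|\widehat u(\xi)|^2\,d\xi\,dy
= \int_{\mathbb{R}^3} 4|\xi|^4|\widehat u(\xi)|^2\cdot\frac{1}{2|\xi|}\,d\xi
= \int_{\mathbb{R}^3} 2|\xi|^3|\widehat u(\xi)|^2\,d\xi,
\]
which is exactly $2\int_{\mathbb{R}^3}|(-\Delta)^{3/4}u|^2\,dx$ by the Fourier characterization of the fractional Laplacian (with the convention $\widehat{(-\Delta)^{s}u} = |\xi|^{2s}\widehat u$). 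This establishes the identity \eqref{identity1}.

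The main technical point to be careful about is justifying that the solution $U$ of the biharmonic problem in the energy space $W^{2,2}(\overline{\mathbb{R}^4_+})$ really coincides with the explicit formula above — i.e., that $(1+|\xi|y)e^{-|\xi|y}\widehat u(\xi)$ defines a function whose full Hessian is square-integrable and which is the unique minimizer — and that all the Fubini/Plancherel manipulations are legitimate for $u\in W^{3/2,2}(\mathbb{R}^3)$; one handles this first for Schwartz $u$ and then passes to the limit using density, noting $\int_0^\infty|\xi|^4 e^{-2|\xi|y}\,dy=\frac12|\xi|^3$ is integrable against $|\widehat u|^2$ precisely because $u\in W^{3/2,2}$. (Since the statement is attributed to \cite{NNP}, I would also simply cite that reference, but the self-contained Fourier computation above is the cleanest route.)
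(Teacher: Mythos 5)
Your computation is correct and delivers the identity cleanly. However, the route you take is genuinely different from the one the paper uses. The paper does not prove Lemma~\ref{lem3} directly — it cites \cite{NNP} — but it \emph{does} prove the general-$m$ version, Lemma~\ref{lem1.1}, in Section~\ref{S5}, and that argument specializes to $m=2$: it starts from the explicit physical-space Poisson-type kernel $U(x,y)=c\int_{\mathbb{R}^3} y^3(|x-\xi|^2+y^2)^{-3}u(\xi)\,d\xi$ (from~\cite{Yang}), records the resulting boundary behavior (in particular $\partial_y\Delta U|_{y=0}=\pm 2(-\Delta_x)^{3/2}u$), and then extracts the identity from $0=\int_{\mathbb{R}^4_+}U\,\Delta^2U$ via Green's formula. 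Your proof instead works entirely on the tangential Fourier side: you reduce the biharmonic PDE to the ODE $(\partial_y^2-|\xi|^2)^2\widehat{U}=0$, pick out the decaying branch, impose the two boundary conditions to get $\widehat{U}(\xi,y)=(1+|\xi|y)e^{-|\xi|y}\widehat u(\xi)$, and then apply Plancherel and the explicit $y$-integral $\int_0^\infty e^{-2|\xi|y}\,dy=\tfrac{1}{2|\xi|}$. The two approaches buy slightly different things: the paper's Green's-formula argument extends uniformly to all orders $m$ once the boundary data of the poly-harmonic Poisson kernel are tabulated, whereas your Fourier calculation is self-contained and transparent for $m=2$ (the boundary relation $\partial_y\Delta U|_{y=0}=2(-\Delta_x)^{3/2}u$ simply drops out of $\widehat{\Delta U}=-2|\xi|^2e^{-|\xi|y}\widehat u$), at the cost of having to re-solve a higher-order ODE if one wanted general $m$. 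You correctly flag the one loose end — that the energy solution in $W^{2,2}(\overline{\mathbb{R}^4_+})$ is indeed the decaying Fourier-multiplier solution — and your remark that this follows from uniqueness in the energy class plus density of Schwartz data is exactly the right way to close it. The constants also check out, since with the unitary normalization $\widehat{(-\Delta)^{3/4}u}=|\xi|^{3/2}\widehat u$, you get $\int 2|\xi|^3|\widehat u|^2\,d\xi=2\|(-\Delta)^{3/4}u\|_2^2$, matching the $m=2$ value $\Gamma(2)\Gamma(\tfrac12)/\Gamma(\tfrac32)=2$ in Lemma~\ref{lem1.1}.
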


Now, we are in the position to prove Theorem \ref{thm1}.
\vskip0.1cm

\emph{Proof of Theorem \ref{thm1}:}
For any $U\in W^{2,2}(\overline{\mathbb{R}^4_{+}})$ satisfying the Neumann boundary condition $\partial_{y}U(x,y)|_{y=0}=0$, we define $V(x,y)$ as the bi-harmonic extension of $U(x,0)$, that is to say $V(x,y)\in W^{2,2}(\overline{\mathbb{R}^4_{+}})$ satisfies that
$$(-\Delta)^2V(x,y)=0$$
on the upper half space $\mathbb{R}^{4}_{+}$ and the boundary condition
$$V(x,y)|_{y=0}=U(x,y)|_{y=0},\ \ \partial_{y}V(x,y)|_{y=0}=0.$$
In fact, we can write $$V(x,y)=c\int_{\partial\mathbb{R}^{4}_{+}} \frac{y^3}{(|x-\widetilde{x}^2|+y^2)^{3}}U(\widetilde{x},0)d\widetilde{x}$$ by the Green formula. Furthermore, by the Dirichlet principle for bi-harmonic function, we derive that
\begin{equation}\label{energy1}
\int_{\mathbb{R}^4_{+}}|\Delta V(x,y)|^2dxdy\leq \int_{\mathbb{R}^4_{+}}|\Delta U(x,y)|^2dxdy.
\end{equation}
On the other hand, through Lemma \ref{lem3}, we get
\begin{equation}\label{identity-1}
\int_{\mathbb{R}^4_{+}}|\Delta V(x,y)|^2dxdy=2\int_{\partial\mathbb{R}^4_{+}}|(-\Delta)^{\frac{3}{4}}V(x,0)|^2dx.
\end{equation}
Combining \eqref{energy1} and \eqref{identity-1} and Lemma \ref{lem1}, we conclude that
\begin{equation}\begin{split}
\int_{\partial\mathbb{R}^{4}_{+}}(\exp(\beta|U(x,0)|^{2})-1)dx&=\int_{\partial\mathbb{R}^{4}_{+}}(\exp(\beta|V(x,0)|^{2})-1)dx\\
&\leq C\int_{\partial\mathbb{R}^{4}_{+}}|V(x,0)|^{2}dx= C\int_{\partial\mathbb{R}^{4}_{+}}|U(x,0)|^{2}dx.
\end{split}\end{equation}
The sharpness of the inequality \eqref{sub} can be similarly deduced from the sharpness of the sharp subcritical inequalities \eqref{intq1} as done in the proof of sharpness of inequality (\ref{addsub}).
\vskip0.1cm

Now, we prove the existence of extremals for the subcritical trace Adams inequalities.
From Lemma \ref{lem2}, we see that there exists $u_0(x)\in W^{\frac{3}{2},2}(\mathbb{R}^3)$ such that
$$C=\sup_{u\in  W^{\frac{3}{2},2}(\mathbb{R}^3),\|(-\Delta)^{\frac{3}{4}}\|_2^2=1}\frac{\int_{\mathbb{R}^{3}}(\exp(\beta|u_0|^2)-1)dx}{\int_{\mathbb{R}^{3}}|u_0|^2dx}$$ for any $\beta<6\pi^2$. Define $U(x,y)$ as the bi-harmonic extension of $u_0(x)$, it is easy to check that $U(x,y)$ is an extremal function for \eqref{sub} through Lemma \ref{lem3}.
\vskip0.1cm

Finally, we show that the extremals of the inequality \eqref{sub} must be a bi-harmonic extension of
some radial function $u$ in $W^{\frac{3}{2},2}(\mathbb{R}^3)$. In fact, assume that $U(x,y)$ is the extremal function of the inequality (\ref{sub}),
It is easy to check that the bi-harmonic extension $V(x,y)$ is also the extremal function of the inequality with the
$$\int_{\mathbb{R}^4_{+}}|\Delta U|^2dxdy=\int_{\mathbb{R}^4_{+}}|\Delta V|^2dxdy=1,$$
which implies that $\int_{\mathbb{R}^4_{+}}|\Delta U|^2dxdy=\int_{\mathbb{R}^4_{+}}|\Delta V|^2dxdy$ if and only if
$U=V$ through Dirichlet principle. On the other hand, it is also easy to see that $U(x,0)$ is the extremal functions of the inequality (\ref{intq1}). Applying Lemma \ref{lem2} again, we conclude that $U(x,0)$ must be radially symmetric with respect to some point $x_0 \in \mathbb{R}^3$.  This proves that the extremals of inequality \eqref{sub} must be a bi-harmonic extension of some radial function.
\medskip

\emph{Proof of Theorem \ref{thm2} and Theorem \ref{adam-exact}:}
For any $U\in W^{2,2}(\overline{\mathbb{R}^4_{+}})$ satisfying Neumann boundary condition $\partial_{y}U(x,y)|_{y=0}=0$ with $$\int_{\mathbb{R}^4_{+}}|\Delta U(x,y)|^2dxdy+\int_{\partial\mathbb{R}^4_{+}}|U(x,0)|^2dx\leq 1,$$ we derive that
\begin{equation}\label{energy2}
\int_{\mathbb{R}^4_{+}}|\Delta V(x,y)|^2dxdy\leq \int_{\mathbb{R}^4_{+}}|\Delta U(x,y)|^2dxdy,
\end{equation}
where $V(x,y)$ is the bi-harmonic extension of $U(x,0)$. This together with Lemma \ref{lem3} and \eqref{energy2} yields that $$\int_{\partial\mathbb{R}^4_{+}}(2|(-\Delta)^{\frac{3}{4}}V(x,0)|^2+|V(x,0)|^2)dx\leq 1.$$
Combining the critical fractional Adams inequalities in $W^{\frac{3}{2},2}\left(\mathbb{R}^3\right)$, we derive that
\begin{equation}\begin{split}
\int_{\partial\mathbb{R}^{4}_{+}}(\exp(12\pi^2|U(x,0)|^{2})-1)dx&=\int_{\partial\mathbb{R}^{4}_{+}}(\exp(12\pi^2|V(x,0)|^{2})-1)dx\leq C.
\end{split}\end{equation}
Similarly, one can deduce from the fractional Adams inequalities with the exact growth in $W^{\frac{2}{3}}(\mathbb{R}^3)$ (see Theorem \ref{fratru}) that
 \begin{equation}\begin{split}
\int_{\partial\mathbb{R}^{4}_{+}}\frac{(\exp(12\pi^2|U(x,0)|^{2})-1)}{1+|U(x,0)|^2}dx&=\int_{\partial\mathbb{R}^{4}_{+}}(\exp(12\pi^2|V(x,0)|^{2})-1)dx\leq C\int_{\partial\mathbb{R}^{4}_{+}}|U(x,0)|^{2}dx,
\end{split}\end{equation}
which gives the inequality \eqref{addadmexa}.

\section{Existence of ground states to the bi-Laplacian  with nonlinear
Neumann boundary conditions on the half space $\mathbb{R}^{4}_{+}$}
In this section, we consider the critical point of the functional $I_{\lambda}(U)$ on $E$ (see \eqref{functional}). For this purpose, we will first study the critical point of  the following functional $$J_{\lambda}(u)=\frac{1}{2}\big(2\int_{\mathbb{R}^3}|(-\Delta )^{\frac{3}{4}}u|^2dx+\int_{\mathbb{R}^3}|u|^2dx\big)
-\frac{\lambda}{24\pi^2}\int_{\mathbb{R}^3}\big(\exp(12\pi^2|u|^2)-1\big)dx,$$
and we show that the functional $J_{\lambda}(u)$ has a least energy critical point provided $0<\lambda<1$, that is,

\begin{proposition}\label{technical lemma}
 $m_{\lambda}=\inf\{{J_{\lambda}(u):J^{\prime}_{\lambda}(u)=0, u\in W^{\frac{3}{2},2}(\mathbb{R}^3)}\}$
can be achieved for  any $0<\lambda<1$.
\end{proposition}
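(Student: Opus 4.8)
The plan is to find the least energy critical point of $J_\lambda$ via the Nehari manifold approach, exploiting the sharp critical fractional Adams inequality in $W^{\frac{3}{2},2}(\mathbb{R}^3)$ together with the Fourier rearrangement machinery already developed for Lemma \ref{lem2}. First I would introduce the Nehari manifold
\[
\mathcal{N}_\lambda=\{u\in W^{\frac{3}{2},2}(\mathbb{R}^3)\setminus\{0\}: J'_\lambda(u)u=0\},
\]
and observe that every nontrivial critical point of $J_\lambda$ lies on $\mathcal{N}_\lambda$, so that $m_\lambda\ge c_\lambda:=\inf_{\mathcal{N}_\lambda}J_\lambda$; conversely, a minimizer of $J_\lambda$ on $\mathcal{N}_\lambda$ that is also a free critical point realizes $m_\lambda$. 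Using the power-series expansion $\exp(t)-1-t=\sum_{j\ge 2}t^j/j!$ one checks that $J_\lambda(u)>0$ and that for each fixed $u\ne 0$ the fibering map $t\mapsto J_\lambda(tu)$ has a unique positive critical point $t_u$, with $J_\lambda(t_uu)=\max_{t>0}J_\lambda(tu)$; this gives the standard identification of $c_\lambda$ with a mountain-pass level and shows $c_\lambda>0$. The condition $0<\lambda<1$ enters here to guarantee the quadratic part $\frac12(2\|(-\Delta)^{3/4}u\|_2^2+\|u\|_2^2)$ dominates near the origin against the term $\frac{\lambda}{24\pi^2}\int(\exp(12\pi^2|u|^2)-1)\ge \frac{\lambda}{2}\int|u|^2\cdot(\cdots)$, so the geometry is genuinely of mountain-pass type and $c_\lambda$ stays below the threshold where the critical Adams inequality loses compactness.

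Next I would take a minimizing sequence $(u_k)\subset\mathcal{N}_\lambda$ for $c_\lambda$ and first replace it by its Fourier rearrangement: setting $u_k^\sharp=\mathcal{F}^{-1}\{(\mathcal{F}u_k)^\ast\}$, the inequalities $\|(-\Delta)^{3/4}u_k^\sharp\|_2\le\|(-\Delta)^{3/4}u_k\|_2$, $\|u_k^\sharp\|_2=\|u_k\|_2$, and $\|u_k^\sharp\|_q\ge\|u_k\|_q$ for even $q\ge 4$ from \cite{Lenza} show (after projecting back onto $\mathcal{N}_\lambda$ via the fibering scaling $t_{u_k^\sharp}\le 1$) that the rearranged sequence is still minimizing and now consists of radial functions. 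A Pohozaev-type identity for $J_\lambda$, combined with $J'_\lambda(u_k)u_k=0$, yields a uniform bound on $\|(-\Delta)^{3/4}u_k\|_2$ and $\|u_k\|_2$, so up to a subsequence $u_k\rightharpoonup u$ in $W^{\frac{3}{2},2}_r(\mathbb{R}^3)$ and, by the compact embedding $W^{\frac{3}{2},2}_r(\mathbb{R}^3)\hookrightarrow L^p(\mathbb{R}^3)$ for all $p>2$, strongly in every such $L^p$. The arguments in the proof of Lemma \ref{lem2} — the mean value theorem estimate \eqref{a.4}–\eqref{a.5} and the critical Adams inequality — transfer the nonlinear term: $\int(\exp(12\pi^2|u_k|^2)-1)\to\int(\exp(12\pi^2|u|^2)-1)$ and likewise for $\int\exp(12\pi^2|u_k|^2)|u_k|^2$, provided we know the energies stay strictly below the critical level $c_\lambda<$ (threshold), which is exactly where the upper bound on $\lambda$ is used once more. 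From this one deduces $u\ne 0$ (if $u=0$ the nonlinear terms vanish and $\mathcal{N}_\lambda$-membership forces $\|u_k\|\to 0$, contradicting $c_\lambda>0$), that $u\in\mathcal{N}_\lambda$, and by weak lower semicontinuity of the quadratic part that $J_\lambda(u)\le\liminf J_\lambda(u_k)=c_\lambda$, hence $J_\lambda(u)=c_\lambda$.

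Finally, I would show the minimizer $u$ is a free critical point of $J_\lambda$, i.e. $J'_\lambda(u)=0$ and not merely a constrained one: since $\mathcal{N}_\lambda$ is a natural constraint (the fibering map has a unique strict maximum), the Lagrange multiplier attached to the constraint $J'_\lambda(u)u=0$ must vanish, by testing the Euler–Lagrange relation against $u$ itself and using that $\frac{d}{dt}\big(J'_\lambda(tu)(tu)\big)|_{t=1}<0$ on $\mathcal{N}_\lambda$ for this superquadratic nonlinearity. Therefore $u$ solves the Euler–Lagrange equation associated with $J_\lambda$, and $m_\lambda=J_\lambda(u)=c_\lambda$ is attained. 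The main obstacle I anticipate is the compactness step: verifying that the minimax level $c_\lambda$ lies strictly below the critical threshold of the Adams inequality so that the exponential nonlinearity passes to the limit without concentration — this is the delicate point and is precisely what the hypothesis $0<\lambda<1$ is designed to secure, via a careful comparison of $c_\lambda$ with the energy of suitably scaled test functions (Moser-type concentrating profiles built from the extremal of the subcritical inequality in Lemma \ref{lem2}).
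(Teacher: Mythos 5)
Your proposal takes a genuinely different route from the paper's, and while the overall strategy is plausible, it leaves the decisive step open exactly where the paper's choice of method does its real work.

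You minimize $J_\lambda$ on the Nehari manifold $\mathcal{N}_\lambda=\{u: J'_\lambda(u)u=0\}$. The paper instead minimizes on a Pohozaev-type manifold $M=\{u: G_\lambda(u)=0\}$, where
\[
G_\lambda(u)=\|u\|_2^2-\frac{\lambda}{12\pi^2}\int_{\mathbb{R}^3}\bigl(\exp(12\pi^2|u|^2)-1\bigr)\,dx,
\]
a constraint that does not involve the fractional Dirichlet norm at all. The payoff is immediate: on $M$, one has $J_\lambda(u)=\|(-\Delta)^{3/4}u\|_2^2$, so the constrained minimum $A_\lambda=\inf_M J_\lambda$ is literally the minimal value of $\|(-\Delta)^{3/4}u\|_2^2$. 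Thus the statement \emph{``$A_\lambda<1/2$''} is the same as the statement that the minimizing sequence sits strictly below the sharp Adams threshold (recall $2\|(-\Delta)^{3/4}u\|_2^2\le 1$ is the normalization for exponent $12\pi^2$), and compactness follows from the subcritical Adams inequality with no further work. The paper then proves $A_\lambda<1/2$ directly by observing that the Adams ratio $C^L_{A,\lambda}=\sup\{\|u\|_2^{-2}\int g_\lambda(u): \|(-\Delta)^{3/4}u\|_2^2\le L\}$ blows up as $L\to1/2$; this produces a $u_0$ with $G_\lambda(u_0)<0$, and scaling back onto $M$ yields $A_\lambda\le s_0^2\cdot\tfrac12<\tfrac12$. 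The hypothesis $0<\lambda<1$ enters here in a concrete, elementary way: $(1-\lambda)>0$ is what lets the blowing-up ratio dominate.

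In your Nehari framework this step is not only unresolved but structurally harder. The Nehari constraint mixes the fractional norm with the nonlinearity, so on $\mathcal{N}_\lambda$ the energy $J_\lambda(u)$ reduces to
\[
J_\lambda(u)=\frac{\lambda}{24\pi^2}\int_{\mathbb{R}^3}\bigl[12\pi^2|u|^2e^{12\pi^2|u|^2}-e^{12\pi^2|u|^2}+1\bigr]\,dx,
\]
which is a purely nonlinear quantity and does not translate into a bound on $\|(-\Delta)^{3/4}u_k\|_2^2$ without a separate argument. You acknowledge this (``this is the delicate point''), but the proposal as written does not actually supply the Moser-profile estimate that would close it, nor does it explain why the resulting energy threshold corresponds to the Adams normalization $\|(-\Delta)^{3/4}u\|_2^2<1/2$. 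There are smaller unexamined points as well — the claim that Fourier rearrangement followed by fibering-map projection yields $t_{u_k^\sharp}\le1$ while keeping the sequence minimizing needs a check, since $\|\cdot\|_q$ only increases for \emph{even} $q\ge4$ under Fourier rearrangement, and your nonlinear term is not a single $L^q$ norm — but the compactness threshold is the real gap. In short, your route is conceivable, but it re-creates precisely the difficulty that the paper's Pohozaev-manifold formulation is designed to dissolve; to make your argument complete you would need to prove a quantitative strict inequality $c_\lambda<$ (explicit threshold) and then deduce the needed sub-$1/2$ bound on the fractional norms of a Palais–Smale sequence, which is a nontrivial addition.
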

\emph{Proof of Proposition \ref{technical lemma}}:
Motivated by the Pohazoev identity for the functional $J_{\lambda}$, we introduce the functional
$$G_{\lambda}(u)=\|u\|_2^2-\frac{\lambda}{12\pi^2}\int_{\mathbb{R}^3} \big(\exp(12\pi^2|u|^2)-1)dx=(1-\lambda)\|u\|_2^2-
\int_{\mathbb{R}^3}g_{\lambda}(u)dx,$$
where $g_{\lambda}(t)$ is defined as $g_{\lambda}(t)=\frac{\lambda}{12\pi^2} \big(\exp(12\pi^2|t|^2)-1-12\pi^2t^2)$
and the constrained minimization problem
\begin{equation}\begin{split}
A_{\lambda} &=\inf \Big\{\|(-\Delta)^{\frac{3}{4}}u\|_2^2 \ | \ u\in W^{\frac{3}{2},2}(\mathbb{R}^3),\ G_{\lambda}(u)=0 \Big\}\\
&=\inf \Big\{J_{\lambda}(u)\ | \ u\in W^{\frac{3}{2},2}(\mathbb{R}^3),\ G_{\lambda}(u)=0 \Big\}\leq m_{\lambda}.\\
\end{split}\end{equation}
Set $M=\{u\in W^{\frac{3}{2},2}(\mathbb{R}^3),\ G_{\lambda}(u)=0\}$, we point out that $M$ is not empty. In fact, let
$u_0\in W^{\frac{3}{2},2}(\mathbb{R}^3)$ be compactly supported and define
$$h(s):=G_{\lambda}(su_0)=s^2(1-\lambda)\|u_0\|_2^2-\int_{\mathbb{R}^3} g_{\lambda}(su_0)dx, \forall s >0.$$
From $\lim\limits_{t\rightarrow 0}\frac{g_\lambda(t)}{t^2}=0$ and $\lim\limits_{t\rightarrow +\infty}\frac{g_\lambda(t)}{t^2}=\infty$,  we have
$h(s)>0$ for $s>0$ small enough and $h(s)<0$ for $s>0$ sufficiently large.  Therefore, there exists $s_0>0$ satisfying
$h(s_0)=0$, which implies $s_0u_0 \in M$.

Next, we show that $A_{\lambda}$ can be attained by some function $u\in W^{\frac{3}{2},2}(\mathbb{R}^3)\backslash \{0\}$. For this, we need the following lemma:
\vskip0.1cm
\begin{lemma}\label{rem1}
There exists a radially minimizing sequence $\{u_k\}_k$ satisfying $\|u_k\|_2^2=1$ for $A_{\lambda}$.
\end{lemma}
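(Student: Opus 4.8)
The plan is to start with an arbitrary minimizing sequence $\{w_k\}$ for $A_\lambda$, apply the Fourier rearrangement $w\mapsto w^\sharp=\mathcal{F}^{-1}\{(\mathcal{F} w)^\ast\}$ from \cite{Lenza} to each $w_k$, and then use the scaling invariance of the problem to normalize $\|\cdot\|_2^2=1$. First I would recall the three basic properties of the Fourier rearrangement already invoked in the proof of Lemma \ref{lem2}: $\|(-\Delta)^{3/4}w^\sharp\|_2\le\|(-\Delta)^{3/4}w\|_2$, $\|w^\sharp\|_2=\|w\|_2$, and $\|w^\sharp\|_q\ge\|w\|_q$ for every $q>2$. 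Expanding $\exp(12\pi^2|w|^2)-1=\sum_{j\ge1}\frac{(12\pi^2)^j}{j!}|w|^{2j}$ in a power series, the last property gives $\int_{\mathbb{R}^3}(\exp(12\pi^2|w^\sharp|^2)-1)\,dx\ge\int_{\mathbb{R}^3}(\exp(12\pi^2|w|^2)-1)\,dx$ because each term $\|w^\sharp\|_{2j}^{2j}\ge\|w\|_{2j}^{2j}$ (here $2j\ge2$, and the $j=1$ term is an equality). Combined with $\|w^\sharp\|_2=\|w\|_2$ this yields $G_\lambda(w^\sharp)\le G_\lambda(w)$.

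The next step is to restore the constraint $G_\lambda=0$ after rearrangement. Since $G_\lambda(w_k)=0$, we have $G_\lambda(w_k^\sharp)\le0$; using $\lim_{t\to0}g_\lambda(t)/t^2=0$ and $\lim_{t\to\infty}g_\lambda(t)/t^2=\infty$ as in the construction of $M$ above, the function $s\mapsto G_\lambda(s\,w_k^\sharp)$ is positive for small $s>0$ and negative for large $s$, so there is a unique (or at least some) $s_k\in(0,1]$ with $G_\lambda(s_k w_k^\sharp)=0$; monotonicity considerations force $s_k\le1$. Setting $\widetilde{w}_k=s_k w_k^\sharp$ we get $\|(-\Delta)^{3/4}\widetilde{w}_k\|_2^2=s_k^2\|(-\Delta)^{3/4}w_k^\sharp\|_2^2\le\|(-\Delta)^{3/4}w_k\|_2^2$, so $\{\widetilde{w}_k\}$ is still a minimizing sequence for $A_\lambda$, now consisting of radial functions lying on the constraint set $M$. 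Finally I would use the dilation $v_k(x):=\widetilde{w}_k(\|\widetilde{w}_k\|_2^{2/3}x)$, exactly as in the proof of Lemma \ref{lem2}: this rescaling preserves membership in $M$ (both $G_\lambda=0$ and, by the homogeneity of the $L^2$ and $\dot W^{3/2,2}$ norms under this particular scaling in $\mathbb{R}^3$, the value of $\|(-\Delta)^{3/4}\cdot\|_2$) while arranging $\|v_k\|_2^2=1$. Thus $\{v_k\}$ is the desired radial minimizing sequence with unit $L^2$ norm.

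The main obstacle I anticipate is verifying that the rescaling step is compatible with both the normalization $\|v_k\|_2=1$ and the constraint $G_\lambda(v_k)=0$ simultaneously — one must check that the scaling $x\mapsto \mu x$ with $\mu=\|\widetilde{w}_k\|_2^{2/3}$ leaves $\int(\exp(12\pi^2|\cdot|^2)-1)dx$ and $\|\cdot\|_2^2$ scaled by the same factor $\mu^{-3}$ (which it does, since both are integrals of pointwise functions of $\widetilde{w}_k$ over $\mathbb{R}^3$), while $\|(-\Delta)^{3/4}\cdot\|_2^2$ in $\mathbb{R}^3$ scales as $\mu^{2\cdot\frac{3}{2}-3}=\mu^0$, i.e. is scale-invariant; this is precisely why the problem in dimension $2m-1=3$ with $s=3/2$ has the requisite scaling invariance. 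A secondary technical point is justifying the termwise comparison of the exponential integrals via the power series, which requires that $\int(\exp(12\pi^2|w_k|^2)-1)dx<\infty$ — but this is guaranteed by the subcritical (indeed critical) fractional Adams inequality of Lemma \ref{lem1} together with $\|(-\Delta)^{3/4}w_k\|_2$ being bounded along the minimizing sequence.
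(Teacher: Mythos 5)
Your proposal is correct and follows essentially the same route as the paper: apply the Fourier rearrangement to a minimizing sequence, use the three monotonicity properties (nonincreasing homogeneous Sobolev seminorm, preserved $L^2$ norm, nondecreasing $L^{2j}$ norms) to deduce $G_\lambda(w_k^\sharp)\le 0$, rescale by a factor $t_k\in(0,1]$ to restore the constraint $G_\lambda=0$ while keeping the seminorm controlled, and finally apply the $L^2$-normalizing dilation that leaves $\|(-\Delta)^{3/4}\cdot\|_2$ and $M$ invariant in $\mathbb{R}^3$. Two small points: your power-series justification of $\int(\exp(12\pi^2|w^\sharp|^2)-1)\,dx\ge\int(\exp(12\pi^2|w|^2)-1)\,dx$ is the right way to make that step rigorous (the Fourier rearrangement inequality $\|w^\sharp\|_q\ge\|w\|_q$ is actually only known for even integer $q$, which is exactly what the series needs), and your dilation exponent $\|\widetilde{w}_k\|_2^{2/3}$ is the correct one in $\mathbb{R}^3$, consistent with the scaling used in the proof of Lemma~\ref{lem2}.
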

\begin{proof}
Assume that $\{u_k\}_k$ is a minimizing sequence for $A_{\lambda}$, that is $u_k \in M$ satisfying
$$\lim\limits_{k\rightarrow \infty}\|(-\Delta)^{\frac{3}{4}} u_k\|_2^2=A_{\lambda}.$$
Define $u_k^{\sharp}$ by the Fourier rearrangement of $u_k$. Using the property of the Fourier rearrangement from \cite{Lenza}, one can derive that
$$\|(-\Delta)^{\frac{3}{4}} u_k^{\sharp}\|_2\leq \|(-\Delta)^{\frac{3}{4}} u_k\|_2,\ \, \|u_k^{\sharp}\|_2^2=\|u_k\|_2^2,$$
and $$\int_{\mathbb{R}^{3}}\big(\exp(12\pi^2{(u_k^{\sharp})}^2)-1\big)dx\geq\int_{\mathbb{R}^3}\big(\exp(12\pi^2u_k^2)-1\big)dx.$$
Then it follows that $$(1-\lambda)\|u_k^{\sharp}\|_2^2=(1-\lambda)\|u_k\|^2_2=\int_{\mathbb{R}^3}g_{\lambda}(u_k)dx\leq \int_{\mathbb{R}^3}g_{\lambda}(u_k^{\sharp})dx.$$
Hence if we set
$$\gamma(t)=(1-\lambda)\|t{u_k^{\sharp}}\|_2^2-\int_{\mathbb{R}^3}g_{\lambda}(t{u_k^{\sharp}})dx,$$
then we have $\gamma(1)\leq 0$ while $\gamma(t)>0$ for $t>0$ sufficiently small. Therefore, there exists $t_k \in (0,1]$ such that
$\gamma(t_k)=0$, that is $t_k{u_k^{\sharp}} \in M$. We obtain
\begin{equation}
A_\lambda\leq I(t_ku_k^{\sharp})=\|(-\Delta)^{\frac{3}{4}}(t_ku_k^{\sharp})\|_2^2\leq t_k^2\|(-\Delta)^{\frac{3}{4}}u_k\|_2^2\leq I(u_k)=A_\lambda+o(1).
\end{equation}
This implies that $\{v_k\}=\{t_k{u_k^{\sharp}}\}_k$ is a radial minimizing sequence for $A_{\lambda}$.  Let $\tilde{v}_k={u_k^{\sharp}}(\|v_k\|_2^{\frac{1}{2}}x)$,
direct computations yield that $\|\tilde{v}_k\|_2=1$, $\tilde{v}_k \in M$ and $\|(-\Delta)^{\frac{3}{4}}v_k\|_2=\|(-\Delta)^{\frac{3}{4}}\tilde{v}_k\|_2$.
\end{proof}

In order to study the attainability of $A_\lambda$, we also need the following compactness result.

\begin{lemma}[Compactness]\label{comp}
Suppose that $g: \mathbb{R}\rightarrow [0,+\infty)$ is a continuous function and define
$$G(u)=\int_{\mathbb{R}^3}g(u)dx.$$
Then for any $K>0$, if
$$\ \lim\limits_{t\rightarrow +\infty}\exp(-\frac{1}{K}|t|^{2})g(t)<+\infty,\ \
\lim\limits_{t\rightarrow 0}|t|^{-2}g(t)=0,$$
then for any bounded sequence $\{u_k\}_k \in W^{\frac{3}{2},2}_{rad}(\mathbb{R}^3)$ satisfying $\lim\limits_{k\rightarrow +\infty}\int_{\mathbb{R}^3}|(-\Delta)^{\frac{3}{4}}u_k|^2 dx< 6\pi^2 K$
and weakly converging to some $u$, we have that $G(u_k)\rightarrow G(u)$.
\end{lemma}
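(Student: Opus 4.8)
The plan is to exploit the compact embedding $W^{\frac{3}{2},2}_{rad}(\mathbb{R}^3)\hookrightarrow L^p(\mathbb{R}^3)$ for every $p>2$ together with the fractional Adams inequality in $W^{\frac{3}{2},2}(\mathbb{R}^3)$ to pass to the limit in $G(u_k)=\int_{\mathbb{R}^3}g(u_k)\,dx$. First I would record the pointwise consequences of the two growth hypotheses: there is a constant $A>0$ so that $0\le g(t)\le A\,t^2$ for $|t|\le 1$, and $g(t)\le A\exp(\frac{1}{K}|t|^2)$ for $|t|\ge 1$; interpolating, for every $\varepsilon>0$ there is $C_\varepsilon$ with
\[
g(t)\le \varepsilon\, t^2 + C_\varepsilon\, |t|^{q}\bigl(\exp(\tfrac{1}{K}|t|^2)-1\bigr)
\]
for all $t$ and any fixed $q>0$ we like (taking $q\ge 1$ is convenient). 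Along $u_k\rightharpoonup u$ in $W^{\frac{3}{2},2}_{rad}$, after passing to a subsequence we have $u_k\to u$ a.e.\ and $u_k\to u$ strongly in $L^p$ for all $p>2$, and in particular $u_k\to u$ a.e.\ so $g(u_k)\to g(u)$ a.e.

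The core of the argument is a uniform integrability estimate that upgrades a.e.\ convergence to $L^1$ convergence via Vitali's theorem. Since $\limsup_k \|(-\Delta)^{3/4}u_k\|_2^2<6\pi^2 K$, I can fix $\theta>1$ close to $1$ and a normalization so that $\theta\,\frac{1}{K}\|u_k\|_\infty^{\text{-type quantity}}$ stays below the critical threshold $6\pi^2$; concretely, writing $a_k:=\|(-\Delta)^{3/4}u_k\|_2$ with $\sup_k a_k^2 =: 6\pi^2 K' < 6\pi^2 K$, the rescaled functions $w_k:=u_k/a_k$ satisfy $\|(-\Delta)^{3/4}w_k\|_2=1$, and by Lemma \ref{lem1} (with any $\beta<6\pi^2$, e.g.\ $\beta = 6\pi^2\frac{K'}{K}\cdot\theta$ for $\theta>1$ slightly bigger than $1$ and still $\beta<6\pi^2$) we get
\[
\sup_k \int_{\mathbb{R}^3}\Bigl(\exp\bigl(\tfrac{\theta}{K}|u_k|^2\bigr)-1\Bigr)\,dx
= \sup_k \int_{\mathbb{R}^3}\bigl(\exp(\beta |w_k|^2)-1\bigr)\,dx \le C\sup_k\|w_k\|_2^2 <\infty .
\]
This is the quantitative input that makes the exponential term equi-integrable: for a measurable set $E$, Hölder with exponents $\theta$ and $\theta'$ gives
\[
\int_E |u_k|^{q}\bigl(\exp(\tfrac1K|u_k|^2)-1\bigr)\,dx
\le \Bigl(\int_E |u_k|^{q\theta'}dx\Bigr)^{1/\theta'}\Bigl(\int_{\mathbb{R}^3}\bigl(\exp(\tfrac{\theta}{K}|u_k|^2)-1\bigr)^{?}dx\Bigr)^{1/\theta},
\]
and because $\exp$ has the property $(e^{s}-1)^\theta\le e^{\theta s}-1$ for $s\ge 0$ and $\theta\ge 1$, the second factor is bounded uniformly in $k$; the first factor is small when $|E|$ is small, uniformly in $k$, by the $L^{q\theta'}$-boundedness of $\{u_k\}$ (from the Sobolev embedding, $q\theta'>2$). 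Combining this with the $\varepsilon t^2$ term — whose contribution is controlled by $\varepsilon\sup_k\|u_k\|_2^2$, made small by choosing $\varepsilon$ small after freezing everything else — yields: for every $\eta>0$ there is $\delta>0$ with $\sup_k\int_E g(u_k)\,dx<\eta$ whenever $|E|<\delta$, i.e.\ $\{g(u_k)\}$ is uniformly integrable. A parallel estimate on the exterior region $\{|x|>R\}$, using that radial $W^{\frac32,2}$ functions decay (so $\|u_k\|_{L^{q\theta'}(|x|>R)}\to 0$ uniformly in $k$ as $R\to\infty$), gives tightness. Vitali's convergence theorem then delivers $G(u_k)=\int g(u_k)\to\int g(u)=G(u)$, and since the limit is independent of the subsequence the whole sequence converges.

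The main obstacle I anticipate is making the Hölder/Moser–Trudinger bookkeeping clean: one must choose $\theta>1$ strictly so that $\beta=\theta\cdot 6\pi^2 K'/K$ is still $<6\pi^2$ (possible precisely because the strict inequality $K'<K$ is assumed — this is where the hypothesis $\lim_k\int|(-\Delta)^{3/4}u_k|^2<6\pi^2K$, rather than $\le$, is essential), and then verify that $(e^s-1)^\theta\le e^{\theta s}-1$ so that raising the exponential integrand to the power $\theta$ stays within the range controlled by Lemma \ref{lem1}. The secondary technical point is the uniform decay/tightness of radial functions at infinity, which follows from the radial-Sobolev (Strauss-type) estimate together with boundedness of $\{u_k\}$ in $W^{\frac32,2}_{rad}$; once tightness and equi-integrability are in hand, Vitali finishes the proof with no further analysis.
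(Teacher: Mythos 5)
Your argument is correct, and it rests on the same essential observation as the paper's: the strict inequality $\lim_k\|(-\Delta)^{3/4}u_k\|_2^2<6\pi^2K$ leaves room to apply the subcritical Fontana--Adams inequality (Lemma \ref{lem1}) to the normalized functions $u_k/\|(-\Delta)^{3/4}u_k\|_2$ at an exponent strictly below $6\pi^2$, producing a uniform bound on an exponential integral, while the Strauss-type radial decay of $W^{3/2,2}_{rad}(\mathbb{R}^3)$ supplies tightness. The difference is in the bookkeeping. The paper splits $\mathbb{R}^3$ into $\{|x|>R\}$, $B_R\cap\{|u_k|>L\}$, and $B_R\cap\{|u_k|\le L\}$: the exterior piece is small because $|u_k|$ is uniformly small there and $g(t)=o(t^2)$ near $0$; on the large-value set one compares exponents directly (since $\gamma/\|(-\Delta)^{3/4}u_k\|_2^2>1$, the surplus factor $\exp(-\delta_0 L^2/K)$ absorbs the constant from $g(t)\lesssim e^{t^2/K}$) and then invokes Lemma \ref{lem1}; on the remaining bounded region Lebesgue dominated convergence finishes. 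You instead phrase the conclusion as Vitali convergence, establishing equi-integrability through H\"older with conjugate exponents $\theta,\theta'$, the elementary inequality $(e^s-1)^\theta\le e^{\theta s}-1$ for $\theta\ge1$, and an interpolation step $g(t)\le\varepsilon t^2+C_\varepsilon|t|^q\big(e^{t^2/K}-1\big)$. Both routes work; the paper avoids the H\"older-exponent gymnastics, whereas yours is cleaner once the uniform exponential bound is available, and makes the role of the hypothesis $\lim_{t\to0}t^{-2}g(t)=0$ a bit more transparent. One shared soft spot, worth keeping in mind: both write-ups silently use that $\|u_k\|_2^2/\|(-\Delta)^{3/4}u_k\|_2^2$ stays bounded, i.e.\ that $\|(-\Delta)^{3/4}u_k\|_2$ is bounded away from zero; if this degenerates along a subsequence the conclusion is still easy (the weak limit is then $0$ and interpolation gives $u_k\to0$ in every $L^p$, $p>2$), but the line as written needs that extra sentence.
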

\begin{proof}
Note that $\{u_k\}_k$ is a radial sequence in $W^{\frac{3}{2},2}(\mathbb{R}^3)$, then by radial Lemma, there exists $\delta>0$ such that
\begin{equation}\begin{split}
|u_k(r)|\lesssim \frac{1}{r^{\delta}}.
\end{split}\end{equation}
Hence $u_k(r)\rightarrow 0$ as $r\rightarrow \infty$ uniformly with respect to $k$. This together with the subcritical Adams inequality \eqref{intq1} yields that for any $\varepsilon>0$, there exists $R>0$ such that
\begin{equation}\label{s3}
\int_{\mathbb{R}^3\setminus B_{R}}g(u_k)dx\lesssim \varepsilon \int_{\mathbb{R}^3}|u_k|^2dx\lesssim \varepsilon,
\ \ \int_{\mathbb{R}^3\setminus B_{R}}g(u)dx\lesssim \varepsilon.
\end{equation}
On the other hand, since $\lim\limits_{k\rightarrow +\infty}\int_{\mathbb{R}^3}|(-\Delta)^{\frac{3}{4}}u_k|^2 dx< 6\pi^2K$, then there exists some $\gamma>0$ such that $\lim\limits_{k\rightarrow +\infty}\int_{\mathbb{R}^3}|(-\Delta)^{\frac{3}{4}}u_k|^2 dx< \gamma<6\pi^2K$.

Through $\lim\limits_{t\rightarrow +\infty}\exp(-\frac{1}{K}|t|^{2})g(t)<+\infty$, we derive that for $\varepsilon>0$, there exists $L$ independent of $k$ such that
$$\int_{|u_k|>L}g(u_k)dx\lesssim \varepsilon \int_{|u_k|>L}\exp(\frac{\gamma}{K}|\frac{u_k}{\|(-\Delta)^{\frac{3}{4}}u_k\|_2}|^2)dx$$
and
$$\int_{|u_k|>L}g(u)dx\lesssim \varepsilon \int_{|u_k|>L}\exp(\frac{\gamma}{K}|\frac{u}{\|(-\Delta)^{\frac{3}{4}}u\|_2}|^2)dx,$$
Applying the sharp subcritical Adams inequality \eqref{intq1}, we derive that
\begin{equation}\label{s4}
\int_{|u_k|>L}g(u_k)dx\lesssim \varepsilon\int_{\mathbb{R}^3}|u_k|^2dx\lesssim \varepsilon,\ \
\int_{|u_k|>L}g(u)dx\lesssim \varepsilon.
\end{equation}
Combining \eqref{s3} and \eqref{s4}, we conclude that
\begin{equation}\begin{split}
\lim_{k\rightarrow \infty}|G(u_k)-G(u)|&\leq \int_{\mathbb{R}^3\setminus B_{R}}(g(u_k)-g(u))dx+\int_{B_{R}}(g(u_k)-g(u))dx\\
& \lesssim \varepsilon +\lim_{k\rightarrow \infty} \Big|\int_{B_{R}\cap\{|u_k|>L\}}g(u_k)dx-\int_{B_{R}\cap\{|u_k|>L\}}g(u)dx\Big|\\
&\ \ + \lim_{k\rightarrow \infty}\Big|\int_{B_{R}\cap\{|u_k|\leq L\}}g(u_k)dx-\int_{B_{R}\cap\{|u_k|\leq L\}}g(u)dx\Big|\\
&\lesssim \varepsilon+\lim_{k\rightarrow \infty}\Big|\int_{B_{R}\cap\{|u_k|\leq L\}}g(u_k)dx-\int_{B_{R}\cap\{|u_k|\leq L\}}g(u)dx\Big|\\
&\lesssim \varepsilon,
\end{split}\end{equation}
which gives the proof by the Lebesgue dominated convergence theorem.
\end{proof}

With this compactness result, we can prove the following

\begin{lemma}\label{rem2}
If $A_{\lambda}< \frac{1}{2}$, then $A_{\lambda}$ can be attained and $A_{\lambda}=I_{\lambda}(u)$.
\end{lemma}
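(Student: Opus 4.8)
The plan is to run a symmetrization-free concentration argument on the radial minimizing sequence supplied by Lemma \ref{rem1}, using the hypothesis $A_\lambda<\frac{1}{2}$ precisely to place that sequence strictly below the level at which the compactness Lemma \ref{comp} can be invoked. First I would take $\{u_k\}_k\subset W^{\frac{3}{2},2}_{rad}(\mathbb{R}^3)$ as in Lemma \ref{rem1}, so that $u_k\in M$, $\|u_k\|_2^2=1$ and $\|(-\Delta)^{\frac{3}{4}}u_k\|_2^2\to A_\lambda$. Since $\{u_k\}_k$ is then bounded in $W^{\frac{3}{2},2}(\mathbb{R}^3)$, after passing to a subsequence we may assume $u_k\rightharpoonup u$ weakly there; in particular $u_k\rightharpoonup u$ in $L^2(\mathbb{R}^3)$ and $(-\Delta)^{\frac{3}{4}}u_k\rightharpoonup(-\Delta)^{\frac{3}{4}}u$ in $L^2(\mathbb{R}^3)$, so weak lower semicontinuity yields $\|u\|_2\le1$ and $\|(-\Delta)^{\frac{3}{4}}u\|_2^2\le A_\lambda$.

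The next step is to show $u\not\equiv0$. Here I would apply Lemma \ref{comp} with $g=g_\lambda$ and $K=\frac{1}{12\pi^2}$: the function $g_\lambda(t)=\frac{\lambda}{12\pi^2}\big(e^{12\pi^2t^2}-1-12\pi^2t^2\big)$ is continuous and nonnegative, $|t|^{-2}g_\lambda(t)\to0$ as $t\to0$, and $e^{-t^2/K}g_\lambda(t)=e^{-12\pi^2t^2}g_\lambda(t)\to\frac{\lambda}{12\pi^2}$ as $t\to\infty$, so the hypotheses of Lemma \ref{comp} hold with threshold $6\pi^2K=\frac{1}{2}$. Since $\lim_k\int_{\mathbb{R}^3}|(-\Delta)^{\frac{3}{4}}u_k|^2dx=A_\lambda<\frac{1}{2}$, the lemma gives $\int_{\mathbb{R}^3}g_\lambda(u_k)dx\to\int_{\mathbb{R}^3}g_\lambda(u)dx$. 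On the other hand, $G_\lambda(u_k)=0$ and $\|u_k\|_2^2=1$ force $\int_{\mathbb{R}^3}g_\lambda(u_k)dx=(1-\lambda)\|u_k\|_2^2=1-\lambda$, hence $\int_{\mathbb{R}^3}g_\lambda(u)dx=1-\lambda>0$, so $u\ne0$.

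The crux is to upgrade $\|u\|_2\le1$ to $\|u\|_2=1$. I would first record that $A_\lambda>0$: combining a Gagliardo--Nirenberg estimate with the subcritical Adams inequality \eqref{intq1}, one has for $w\in M$ with $w\ne0$ that $(1-\lambda)\|w\|_2^2=\int_{\mathbb{R}^3}g_\lambda(w)dx\lesssim\|(-\Delta)^{\frac{3}{4}}w\|_2^2\,\|w\|_2^2$ with an implicit constant that tends to $0$ as $\|(-\Delta)^{\frac{3}{4}}w\|_2\to0$, so $\|(-\Delta)^{\frac{3}{4}}w\|_2$ stays bounded away from $0$ on $M\setminus\{0\}$. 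Now suppose, for contradiction, that $\|u\|_2^2<1$. Then $G_\lambda(u)=(1-\lambda)\|u\|_2^2-\int_{\mathbb{R}^3}g_\lambda(u)dx=(1-\lambda)(\|u\|_2^2-1)<0$, whereas $\gamma(t):=(1-\lambda)t^2\|u\|_2^2-\int_{\mathbb{R}^3}g_\lambda(tu)dx>0$ for $t>0$ small, so there exists $t_0\in(0,1)$ with $\gamma(t_0)=G_\lambda(t_0u)=0$, i.e.\ $0\ne t_0u\in M$. Since $\|(-\Delta)^{\frac{3}{4}}(t_0u)\|_2^2=t_0^2\|(-\Delta)^{\frac{3}{4}}u\|_2^2\le t_0^2A_\lambda$, we obtain $A_\lambda\le t_0^2A_\lambda<A_\lambda$, a contradiction. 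Hence $\|u\|_2=1$, and then $G_\lambda(u)=(1-\lambda)-\int_{\mathbb{R}^3}g_\lambda(u)dx=0$, so $u\in M$.

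Finally, $u\in M$ forces $\|(-\Delta)^{\frac{3}{4}}u\|_2^2\ge A_\lambda$, which together with $\|(-\Delta)^{\frac{3}{4}}u\|_2^2\le A_\lambda$ from the first step gives $\|(-\Delta)^{\frac{3}{4}}u\|_2^2=A_\lambda$, so that $A_\lambda$ is attained at $u$; the identity $A_\lambda=I_\lambda(u)$ then follows at once, since the constraint $G_\lambda(u)=0$ collapses the functional to $\|(-\Delta)^{\frac{3}{4}}u\|_2^2$. I expect the main obstacle to be exactly the no-loss-of-mass step $\|u\|_2=1$: a priori the weak limit could carry strictly less $L^2$-mass, and it is precisely the assumption $A_\lambda<\frac{1}{2}$ that brings the minimizing sequence below the critical level $6\pi^2K$ of Lemma \ref{comp} and rules this out. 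The positivity $A_\lambda>0$ is the secondary technical point that must be verified to make the strict-inequality contradiction work.
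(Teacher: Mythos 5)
Your proof is correct and follows essentially the same strategy as the paper: take the normalized radial minimizing sequence from Lemma \ref{rem1}, extract a weak limit $u$, invoke the compactness Lemma \ref{comp} (enabled precisely by $A_\lambda<\tfrac12$) to pass $\int g_\lambda(u_k)\,dx\to\int g_\lambda(u)\,dx=1-\lambda$, and then use a scaling argument on $t\mapsto G_\lambda(tu)$ together with $A_\lambda>0$ to force $u\in M$ and $\|(-\Delta)^{3/4}u\|_2^2=A_\lambda$.

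The one place where you genuinely deviate is the proof that $A_\lambda>0$. The paper establishes this by contradiction via the compactness lemma: if $A_\lambda=0$, then $u_k\rightharpoonup 0$ and Lemma \ref{comp} would force $\int g_\lambda(u_k)\,dx\to 0$, contradicting the constraint $\int g_\lambda(u_k)\,dx=(1-\lambda)\|u_k\|_2^2=1-\lambda>0$. You instead argue directly from a Gagliardo--Nirenberg/Adams expansion: for $w\in M$, $(1-\lambda)\|w\|_2^2=\int g_\lambda(w)\,dx$ is controlled in terms of $\|(-\Delta)^{3/4}w\|_2$ and $\|w\|_2$, which bounds $\|(-\Delta)^{3/4}w\|_2$ away from zero on $M\setminus\{0\}$. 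Both arguments are valid; yours is a shade more self-contained since it does not reuse Lemma \ref{comp}, while the paper's is shorter given that Lemma \ref{comp} is already set up. One small imprecision in your wording: the prefactor in your estimate $\int g_\lambda(w)\,dx\lesssim\|(-\Delta)^{3/4}w\|_2^2\|w\|_2^2$ does not tend to $0$ as $\|(-\Delta)^{3/4}w\|_2\to 0$ (it tends to a positive constant coming from the leading $\|w\|_4^4$ term); it only needs to remain bounded, and boundedness is what actually delivers the lower bound $\|(-\Delta)^{3/4}w\|_2^2\gtrsim 1-\lambda$. Also note that your choice $K=\tfrac{1}{12\pi^2}$ (so $6\pi^2 K=\tfrac12>A_\lambda$) makes the hypothesis of Lemma \ref{comp} hold with strict inequality, which is cleaner than the paper's stated choice $K=A_\lambda/(6\pi^2)$, for which $6\pi^2K=A_\lambda$ is not strictly larger than the limiting Dirichlet energy; the paper presumably intends $K$ to be taken slightly larger.
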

\begin{proof}
Let $\{u_k\}_k$ is a radial minimizing sequence for $A_{\lambda}$, that is $u_k \in M$ satisfying
$$\lim\limits_{k\rightarrow \infty}\|(-\Delta)^{\frac{3}{4}} u_k\|_2^2=A_{\lambda} \ \ \text{and}\ \ \|u_k\|_2^2=1.$$ We also assume that $u_k\rightharpoonup u$ in $W^{\frac{3}{2},2}(\mathbb{R}^3)$.
We first prove that $A_{\lambda}>0$. By way of contradiction, we assume that $A_{\lambda}=0$. That is $\lim\limits_{k\rightarrow \infty}\|(-\Delta)^{\frac{3}{4}} u_k\|_2^2=0$, which implies that $u=0$.
Since $$\ \lim\limits_{t\rightarrow +\infty}\exp(-c|t|^{2})g_{\lambda}(t)=0\ \text{for\ any }c>12\pi^2,\ \
\lim\limits_{t\rightarrow 0}|t|^{-2}g_{\lambda}(t)=0.$$ It follows from the Lemma \ref{comp} that
$$\int_{\mathbb{R}^3}g_{\lambda}(u_k)dx=\int_{\mathbb{R}^3}g_{\lambda}(u)dx.$$ On the other hand,  it follows from $u_k \in M$ and $\|u_k\|_2^2=1$ that
$$0<(1-\lambda)\leq \lim\limits_{k\rightarrow \infty}(1-\lambda)\|u_k\|_2^2=\lim\limits_{k\rightarrow \infty}\int_{\mathbb{R}^3}g_{\lambda}(u_k)dx=\int_{\mathbb{R}^3}g_{\lambda}(u)dx,$$
which contradicts $u=0$. This proves that $A_{\lambda}>0$.
\medskip

Now are in position to prove that if $A_{\lambda}< \frac{1}{2}$, then $A_{\lambda}$ could be attained.
Under the assumption of Lemma \ref{lem2}, we have $\lim_{k\rightarrow \infty}\|\Delta u_k\|_2^2=A_{\lambda}<\frac{1}{2}$. Set $K=\frac{A_{\lambda}}{6\pi^2}$,
obviously, $$\ \lim\limits_{t\rightarrow +\infty}\exp(-\frac{1}{K}|t|^{2})g_{\lambda}(t)=0, \ \
\lim\limits_{t\rightarrow 0}|t|^{-2}g_{\lambda}(t)=0.$$ It follows from Lemma \ref{comp} that $$\int_{\mathbb{R}^3}g_{\lambda}(u_k)dx=\int_{\mathbb{R}^3}g_{\lambda}(u)dx.$$
Hence $$(1-\lambda)=\lim\limits_{k\rightarrow \infty}(1-\lambda)\|u_k\|_2^2=\lim\limits_{k\rightarrow \infty}\int_{\mathbb{R}^3}g_{\lambda}(u_k)dx=\int_{\mathbb{R}^3}g_{\lambda}(u)dx$$ and $$\|(-\Delta)^{\frac{3}{4}} u\|_2^2\leq \lim\limits_{k\rightarrow \infty}\|(-\Delta)^{\frac{3}{4}}u_k\|_2^2=A_{\lambda}.$$ In order to show $u$ is minimizer for $A_{\lambda}$, it suffices to show that $G_{\lambda}(u)=0$.
Since \begin{equation}
G_{\lambda}(u)=(1-\lambda)\|u\|_2^2-\int_{\mathbb{R}^3}g_{\lambda}(u)dx\leq \lim\limits_{k\rightarrow \infty}(1-\lambda)\|u_k\|_2^2-\int_{\mathbb{R}^3}g_{\lambda}(u_k)dx=\lim\limits_{k\rightarrow \infty}G_{\lambda}(u_k)=0.
\end{equation}
If we define
$$h(t)=G_{\lambda}(tu)=(1-\lambda)\|tu\|_2^2-\int_{\mathbb{R}^3}g(tu)dx,$$ then $h(1)\leq 0$ and from $\lim_{k\rightarrow \infty}g(t)=o(t^2)$ , we deduce that
$h(t)<0$ for small $t>0$. Consequently, there exists $s_0 \in (0,1]$ such that $G_{\lambda}(s_0u)=0$. Then we have
$$A_{\lambda}\leq \|(-\Delta)^{\frac{3}{4}} s_0u\|_2^2=s_0^2\|(-\Delta)^{\frac{3}{4}} u\|_2^2\leq s_0^{2}A_{\lambda},$$
which proves that $s_0=1$ and  $\|(-\Delta)^{\frac{3}{4}} u\|_2^2=A_{\lambda}$. Then we accomplish the proof.
\end{proof}
In order to obtain the attainability of $A_{\lambda}$, we introduce
the fractional Adams ratio
$$C_{A,\lambda}^{L}=\sup\{\frac{1}{\|u\|_2^2}\int_{\mathbb{R}^3}g_{\lambda}(u)dx|\ u\in W^{\frac{3}{2},2}(\mathbb{R}^3), \|(-\Delta)^{\frac{3}{4}} u\|_2^2\leq L\}.$$
The fractional Adams threshold $R(g_{\lambda})$ is defined as
$$R(g_{\lambda})=\sup\{L>0\ |\  C_{A,\lambda}^{L} <+\infty\}.$$
Obviously, according subcritical fractional Adams inequality \eqref{intq1}, we know $R(g_{\lambda})=\frac{1}{2}$
and $C_{A,\lambda}^{*}:= C_{A,\lambda}^{R(g_{\lambda})}=\infty$. By using this fact, we can claim that \begin{equation}\label{small}
A_{\lambda}<\frac{1}{2}.
\end{equation}
Indeed, note that $C_{A,\lambda}^{*}=\infty$, hence there exists $u_0\in W^{\frac{3}{2},2}(\mathbb{R}^3)$ such that
$$(1-\lambda)< \frac{1}{\|u_0\|_2^2}\int_{\mathbb{R}^3}g_{\lambda}(u_0)dx,\ \ \|(-\Delta)^\frac{3}{4} u_0\|_2^2\leq \frac{1}{2}.$$
Hence $G_{\lambda}(u_0)=(1-\lambda)\|u_0\|_2^2-\int_{\mathbb{R}^3}g_{\lambda}(u_0)dx<0$. Then there exists $s_0\in (0,1)$ such that
$s_0u_0\in M$, which yields that
$$A_{\lambda}\leq \|(-\Delta)^{\frac{3}{4}}(s_0u_0)\|_2^2=s_0^2\|(-\Delta)^{\frac{3}{4}} u_0\|_2^2\leq \frac{1}{2}s_0^2<\frac{1}{2}.$$
Then the claim is finished. Now, we come to
\vskip0.1cm

\emph{Completion of  the proof of Proposition \ref{technical lemma}}:
Combining Lemmas \ref{rem1}, \ref{rem2} and \eqref{small}, we see that $A_{\lambda}$ can be attained. Under  a suitable change
of scale, it is easy to see that $m_{\lambda}$ can also be attained.
\medskip

In view of  Proposition \ref{technical lemma}, we can give the proof of Theorem \ref{thm3}.
\vskip0.1cm

\emph{Proof of Theorem \ref{thm3}}: Let $v(x)\in W^{\frac{3}{2},2}(\mathbb{R}^3)$ be the minimum point of the functional $J_{\lambda}$ on the submanifold $F:=\{J^\prime_{\lambda}(u)=0\ |u\in W^{\frac{3}{2},2}(\mathbb{R}^3)\}$ and  define $V(x,y)$ as the bi-harmonic extension of $v(x)$, that is to say $V(x,y)$ satisfies that
$$(-\Delta)^2V(x,y)=0$$
on the upper half space $\mathbb{R}^{4}_{+}$ and the boundary condition.
$$V(x,0)=v(x),\ \ \partial_{y}V(x,y)|_{y=0}=0.$$
We will show that $V(x,y)$ is the ground state of equation \eqref{Pro}, that is to prove that $V(x,y)$
is the minimum point of the functional $I_{\lambda}$ on the submanifold $\tilde{F}:= \{I^\prime_{\lambda}(U)=0\ |U\in E\}$.
 \medskip

 Assume that $U(x,y)$ is any solution of equation \eqref{Pro}, we only need to verify that $I_{\lambda}(V)\leq I _{\lambda}(U)$. Through equation \eqref{Pro}, we easily see that $U(x,y)$ is the harmonic extension of $u(x)=U(x,0)$ and $u(x)$ is the critical point of functional $J_{\lambda}$ in $W^{\frac{3}{2},2}(\mathbb{R}^3)$. In view of Lemma \ref{lem3}, we derive that
\begin{equation}\label{identity1}
\int_{\mathbb{R}^4_{+}}|\Delta V(x,y)|^2dxdy=2\int_{\mathbb{R}^3}|(-\Delta)^{\frac{3}{4}}v(x)|^2dx,\ \ \int_{\mathbb{R}^4_{+}}|\Delta U(x,y)|^2dxdy=2\int_{\mathbb{R}^3}|(-\Delta)^{\frac{3}{4}}u(x)|^2dx.
\end{equation}
Then it follows that $I_{\lambda}(V)=J_{\lambda}(v)$ and $I_{\lambda}(U)=J_{\lambda}(u)$.
This together with $v$ being the least energy critical point of the $J_{\lambda}$ on $F$ implies that $$I_{\lambda}(V)=J_{\lambda}(v)\leq J_{\lambda}(u)=I_{\lambda}(U),$$
that is to say $V$ is the ground-state solution for equation \eqref{Pro} with nonlinear
Neumann boundary conditions on the half space.

\section{Proofs of   Lemma \ref{lem1.1} and Theorems \ref{thm1.1} and \ref{thm2.1} }\label{S5}

The main purpose of this section is to establish Lemma \ref{lem1.1} which is the essential technical ingredient
in proving Theorems \ref{thm1.1} and \ref{thm2.1} for $m>2$ by adapting the same argument in establishing Theorems \ref{thm1} and \ref{thm2} in the case of $m=2$.

We now give the proof of Lemma \ref{lem1.1}.

\begin{proof}
It is known that the solution $U(x,y)$ is unique. By using the the generalized
Poisson kernel, it has been shown in \cite{Yang} that the explicit formula of $U$ is given by
\begin{equation*}
\begin{split}
U(x,y)=\pi^{-m+\frac{1}{2}}
\frac{\Gamma(2m-1)}{\Gamma(m-\frac{1}{2})}
\int_{\mathbb{R}^{2m-1}}\frac{y^{2m-1}}{\big(|x-\xi|^{2}+y^{2}\big)^{2m-1}}u(\xi)d\xi.
\end{split}
\end{equation*}
Furthermore, $U(x,y)$ satisfying the following Neumann boundary condition
\begin{equation}\label{a1}
\begin{split}
\Delta^{k}U(x,y)|_{y=0}=&\frac{\Gamma(m)\Gamma(m-\frac{1}{2}-k)}{\Gamma(m-1-k)\Gamma(m-\frac{1}{2})}\Delta^{k}_{x}u,
\;\;\;0\leq k\leq \left[\frac{m-1}{2}\right];\\
\partial_{y}\Delta^{k}U(x,y)|_{y=0}=&0,\;\;\;\;\;\;\;\;\;\;\;\;\;\;\;\;\;\;\;\;\;\;\;\;\;\;\;\;\;\;\;\;\;\;\;\;\;\;
\;\;\;\;\;\;\;\;0\leq k\leq\left[\frac{m-2}{2}\right];\\
\partial_{y}\Delta^{m-1}U(x,y)|_{y=0}=&(-1)^{m-1}\frac{\Gamma(m)\Gamma(\frac{1}{2})}{\Gamma(m-\frac{1}{2})}(-\Delta_{x})^{m-\frac{1}{2}}u.
\end{split}
\end{equation}
By   Green's formula and (\ref{a1}), we have
\begin{equation*}
\begin{split}
0=&\int_{\mathbb{R}_{+}^{2m}}U\Delta^{m}Udxdy\\
=&\int_{\mathbb{R}_{+}^{2m}}\Delta U\Delta^{m-1}Udxdy+\int_{\mathbb{R}^{2m-1}}U\partial_{y}\Delta^{m-1}Udx-
\int_{\mathbb{R}^{2m-1}}\partial_{y}U\Delta^{m-1}Udx\\
=&\int_{\mathbb{R}_{+}^{2m}}\Delta U\Delta^{m-1}Udxdy+(-1)^{m}\frac{\Gamma(m)\Gamma(\frac{1}{2})}{\Gamma(m-\frac{1}{2})}
\int_{\mathbb{R}^{2m-1}}
u(x)(-\Delta_{x})^{m-\frac{1}{2}}u(x)dx
\end{split}
\end{equation*}
and
\begin{equation*}
\begin{split}
\int_{\mathbb{R}_{+}^{2m}}\Delta U\Delta^{m-1}Udxdy
=&(-1)^{m-2}\int_{\mathbb{R}_{+}^{2m}}|\nabla^{m} U|^{2}dxdy.
\end{split}
\end{equation*}
Therefore,
\begin{equation*}
\begin{split}
\int_{\mathbb{R}_{+}^{2m}}|\nabla^{m} U|^{2}dxdy=&\frac{\Gamma(m)\Gamma(\frac{1}{2})}{\Gamma(m-\frac{1}{2})}
\int_{\mathbb{R}^{2m-1}}
u(x)(-\Delta_{x})^{m-\frac{1}{2}}u(x)dx.
\end{split}
\end{equation*}
The desired result follows.
\end{proof}

\section{Fractional Adams inequalities with the exact growth in $W^{\frac{n}{2},2}(\mathbb{R}^n)$}
In this section, we shall prove the fractional Adams inequalities with the exact growth in $W^{\frac{n}{2},2}(\mathbb{R}^n)$ for all $n\geq 2$.
For this purpose, we need the following lemmas.
\medskip

\begin{lemma}\cite{Be1}\label{rad}
For any radial function $u \in {W^{\frac{{n}}{2},2}}\left( {{\mathbb{R}^n}} \right)$, one has
\[\int_{{\mathbb{R}^n}} {{{\left| {{({-\Delta}) ^{\frac{{n}}{4}}}u} \right|}^2}} dx \ge {{\rm{2}}^{n - 2}}{\Gamma ^2}\left( {\frac{n}{2}} \right)
\int_{{\mathbb{R}^n}} {\frac{{{{\left| {\nabla u} \right|}^2}}}{{{{\left| x \right|}^{n-2} }}}} dx.\]
\end{lemma}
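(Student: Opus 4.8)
\emph{Proof proposal.} The plan is to deduce the estimate from the Plancherel identity together with the sharp fractional Hardy inequality applied in the first spherical-harmonic sector. First I would write, for $n\ge 2$, using $|\xi|^{n}=|\xi|^{n-2}\sum_{j=1}^{n}\xi_j^2$ and $\widehat{\partial_j u}(\xi)=i\xi_j\widehat u(\xi)$,
\begin{equation}\label{e-plancherel}
\int_{\mathbb{R}^n}\big|(-\Delta)^{\frac n4}u\big|^2dx
=\int_{\mathbb{R}^n}|\xi|^{n}\,|\widehat u(\xi)|^2d\xi
=\sum_{j=1}^{n}\int_{\mathbb{R}^n}|\xi|^{n-2}|\xi_j|^2|\widehat u(\xi)|^2d\xi
=\sum_{j=1}^{n}\big\|(-\Delta)^{\frac{n-2}{4}}\partial_j u\big\|_{L^2(\mathbb{R}^n)}^{2}.
\end{equation}
Since $u$ is radial, $\partial_j u(x)=u'(|x|)\,x_j/|x|$, i.e. $\partial_j u$ is the product of a radial profile with the degree-one spherical harmonic $x_j/|x|$; thus each $\partial_j u$ belongs to the $\ell=1$ spherical-harmonic sector of $L^2(\mathbb{R}^n)$. (For $n=2$ the exponent $\frac{n-2}{4}$ is $0$ and \eqref{e-plancherel} is the trivial identity $\|(-\Delta)^{1/2}u\|_2^2=\|\nabla u\|_2^2$.)

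Next I would invoke the sharp fractional Hardy inequality restricted to the $\ell$-th spherical-harmonic sector: for $0\le s<\frac{n+2\ell}{2}$ and every $g$ in that sector,
\begin{equation}\label{e-hardy}
\big\|(-\Delta)^{\frac s2}g\big\|_{L^2(\mathbb{R}^n)}^{2}\ \ge\ 2^{2s}\,\frac{\Gamma^2\!\big(\tfrac{n+2\ell+2s}{4}\big)}{\Gamma^2\!\big(\tfrac{n+2\ell-2s}{4}\big)}\int_{\mathbb{R}^n}\frac{|g(x)|^{2}}{|x|^{2s}}\,dx ,
\end{equation}
the displayed constant being optimal; for $\ell=0$ this is the classical sharp (Herbst--Yafaev--Beckner) fractional Hardy inequality, and the $\ell$-dependence is exactly the substitution $n\mapsto n+2\ell$ in the constant. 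Specializing \eqref{e-hardy} to $\ell=1$, $s=\frac{n-2}{2}$ (so $\tfrac s2=\tfrac{n-2}{4}$, $2s=n-2$, $2^{2s}=2^{n-2}$, and the two $\Gamma$-arguments collapse to $\tfrac{n+2+(n-2)}{4}=\tfrac n2$ and $\tfrac{n+2-(n-2)}{4}=1$) yields, for each $j$,
\begin{equation}\label{e-hardyj}
\big\|(-\Delta)^{\frac{n-2}{4}}\partial_j u\big\|_{L^2(\mathbb{R}^n)}^{2}\ \ge\ 2^{n-2}\,\frac{\Gamma^2(\tfrac n2)}{\Gamma^2(1)}\int_{\mathbb{R}^n}\frac{|\partial_j u|^{2}}{|x|^{n-2}}\,dx
=2^{n-2}\Gamma^2\!\big(\tfrac n2\big)\int_{\mathbb{R}^n}\frac{|\partial_j u|^{2}}{|x|^{n-2}}\,dx .
\end{equation}
Summing \eqref{e-hardyj} over $j=1,\dots,n$, using $\sum_j|\partial_j u|^2=|\nabla u|^2$ and \eqref{e-plancherel}, gives the assertion with the stated sharp constant $2^{n-2}\Gamma^2(n/2)$.

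The only substantial ingredient is \eqref{e-hardy}, which I would either cite from \cite{Be1} (equivalently, the Herbst--Yafaev sharp Hardy inequality) or prove directly as follows: decompose $g(x)=\rho(|x|)\,Y_\ell(x/|x|)$, pass to the variable $|x|=e^{t}$, and apply the Mellin transform, under which $(-\Delta)^{s}$ restricted to the $\ell$-sector acts as multiplication by an explicit ratio of Gamma functions; \eqref{e-hardy} then reduces to the one-variable inequality
\[
\frac{\big|\Gamma\!\big(a-\tfrac s2+\tfrac{i\tau}{2}\big)\big|^{2}}{\big|\Gamma\!\big(a+\tfrac s2+\tfrac{i\tau}{2}\big)\big|^{2}}\ \le\ \frac{\Gamma^2\!\big(a-\tfrac s2\big)}{\Gamma^2\!\big(a+\tfrac s2\big)},\qquad a=\tfrac{n+2\ell}{4},\ \ \tau\in\mathbb{R},
\]
which holds because the $\tau$-derivative of the logarithm of the left-hand side equals $\mathrm{Im}\,\psi(a+\tfrac s2+\tfrac{i\tau}{2})-\mathrm{Im}\,\psi(a-\tfrac s2+\tfrac{i\tau}{2})=\sum_{k\ge0}\big[\tfrac{\tau/2}{(a+\frac s2+k)^2+\tau^2/4}-\tfrac{\tau/2}{(a-\frac s2+k)^2+\tau^2/4}\big]$, which is $<0$ for $\tau>0$. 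The main obstacle is purely this bookkeeping of the Gamma-function constant: one must check that the specialization $\ell=1$, $s=\frac{n-2}{2}$ lands precisely on $2^{n-2}\Gamma^2(n/2)$, and note that it is the appearance of $|\nabla u|^2$ (rather than of a power of $u$) on the right that forces the use of the $\ell=1$ sectorial constant — the $\ell=0$ inequality applied componentwise would only give the strictly smaller constant $2^{n-2}\Gamma^2(\tfrac{n-1}{2})/\pi$.
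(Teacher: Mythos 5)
The paper states this lemma with a bare citation to Beckner's work and offers no in-text proof, so there is no paper argument to compare against; your proof is correct and self-contained. The Plancherel step $\int_{\mathbb{R}^n}|\xi|^n|\hat u(\xi)|^2\,d\xi = \sum_{j=1}^n\|(-\Delta)^{(n-2)/4}\partial_j u\|_{L^2}^2$ is an exact identity; for radial $u$ each $\partial_j u = u'(|x|)\,x_j/|x|$ lies purely in the degree-one spherical-harmonic sector; and the sectorial Herbst--Yafaev--Beckner Hardy inequality with $\ell=1$, $s=(n-2)/2$ gives exactly the constant $2^{n-2}\Gamma^2(n/2)/\Gamma^2(1)=2^{n-2}\Gamma^2(n/2)$ once the Gamma arguments collapse to $n/2$ and $1$, as you verify. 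Summing over $j$ and using $\sum_j|\partial_j u|^2=|\nabla u|^2$ closes the argument. The applicability window $0\le s<(n+2\ell)/2=(n+2)/2$ holds for all $n\ge 2$ (with $n=2$ degenerating to the trivial identity), and your closing observation is apt: applying the ordinary $\ell=0$ Hardy constant $2^{n-2}\Gamma^2((n-1)/2)/\pi$ componentwise would fall strictly short, so the $\ell=1$ refinement is genuinely needed rather than a cosmetic choice. Your Mellin-transform sketch for the sectorial inequality is also sound: the digamma series shows the relevant ratio of Gamma moduli decreases strictly in $|\tau|$, so the supremum is attained at $\tau=0$, which is exactly the stated constant. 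Overall this is a clean self-contained route to Beckner's estimate, organized around the key structural fact that the gradient of a radial function is purely $\ell=1$.
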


\begin{lemma}\label{cont}
For any $u \in W^{\frac{n}{2}, 2}\left(\mathbb{R}^{n}\right)$, one has
$$\int_{\mathbb{R}^{2 m}} \frac{e^{\beta(n,\frac{n}{2})|u|^{2}}-1}{1+|u|^{2}} d x \leq 2\left(e^{\beta(n,\frac{n}{2})}-1-2 \beta(n,\frac{n}{2})\right)\|u\|_{2}^{2}+2 \int_{\mathbb{R}^{2 m}} \frac{e^{\beta(n,\frac{n}{2})\left|u^{\#}\right|^{2}}-1}{1+\left|u^{\#}\right|^{2}} d x,$$
where $u^{\sharp}=\mathcal{F}^{-1}\{(\mathcal{F}u)^{\ast}\}$ denotes the Fourier rearrangement of $u$.
\end{lemma}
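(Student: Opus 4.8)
The plan is to reduce the statement to elementary one--variable inequalities for the function $\varphi(t):=\dfrac{e^{\beta t}-1}{1+t}$, where $\beta=\beta(n,\tfrac n2)$, using only the following feature of the Fourier rearrangement recorded in \cite{Lenza}: $\|u^{\#}\|_{2}=\|u\|_{2}$ and $\|u^{\#}\|_{q}\ge\|u\|_{q}$ for all $q\ge2$; in particular $\|u^{\#}\|_{2j}^{2j}\ge\|u\|_{2j}^{2j}$ for every integer $j\ge1$. Consequently, for any entire function $G(t)=\sum_{j\ge1}a_jt^j$ with $a_j\ge0$, Tonelli's theorem and termwise comparison give
\[
\int_{\mathbb{R}^n}G(|u|^2)\,dx=\sum_{j\ge1}a_j\|u\|_{2j}^{2j}\le\sum_{j\ge1}a_j\|u^{\#}\|_{2j}^{2j}=\int_{\mathbb{R}^n}G(|u^{\#}|^2)\,dx .
\]
The obstruction is that $\varphi$ itself is \emph{not} of this type: it has a pole at $t=-1$, and the Taylor coefficients of $\frac1{1+t}$ (hence of $\varphi$) change sign, so the displayed comparison cannot be applied to $\varphi$ directly. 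I would therefore split $\mathbb{R}^n=\{|u|\le1\}\cup\{|u|>1\}$ and treat the two pieces differently.

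On $\{|u|\le1\}$ the exponential is harmless. A direct computation shows that for $\beta=\beta(n,\tfrac n2)\ge\pi$ (so that $e^{\beta}-1\ge4\beta$) the function $t\mapsto\varphi(t)/t=\frac{e^{\beta t}-1}{t(1+t)}$ is increasing on $[0,1]$ with maximal value $\frac{e^{\beta}-1}{2}\le e^{\beta}-1-2\beta$; hence $\varphi(t)\le(e^{\beta}-1-2\beta)\,t$ for $0\le t\le1$, and therefore $\int_{\{|u|\le1\}}\varphi(|u|^2)\,dx\le(e^{\beta}-1-2\beta)\|u\|_2^2$, which produces the first term on the right-hand side.

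On $\{|u|>1\}$ I would first discard the denominator favourably, $\varphi(t)\le\frac{e^{\beta t}-1}{t}=\beta+\widetilde H(t)$ with $\widetilde H(t):=\frac{e^{\beta t}-1-\beta t}{t}=\sum_{k\ge1}\frac{\beta^{k+1}}{(k+1)!}\,t^{k}$ entire with nonnegative coefficients. The constant $\beta$ contributes only $\beta\,|\{|u|>1\}|\le\beta\|u\|_2^2$; to the entire piece one applies the comparison above, $\int_{\{|u|>1\}}\widetilde H(|u|^2)\,dx\le\int_{\mathbb{R}^n}\widetilde H(|u|^2)\,dx\le\int_{\mathbb{R}^n}\widetilde H(|u^{\#}|^2)\,dx$. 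Finally split this last integral once more: on $\{|u^{\#}|\le1\}$ one has $\widetilde H(t)\le\bigl(\sum_{m\ge2}\tfrac{\beta^m}{m!}\bigr)t=(e^{\beta}-1-\beta)\,t$ (monotonicity of $\widetilde H(t)/t$ on $[0,1]$), a term $\lesssim\|u\|_2^2$; on $\{|u^{\#}|>1\}$ one has $\widetilde H(t)\le\frac{e^{\beta t}-1}{t}\le\frac{2(e^{\beta t}-1)}{1+t}=2\varphi(t)$, since $\frac1t\le\frac2{1+t}$ precisely when $t\ge1$ — this is exactly where the factor $2$ in the statement originates. Summing the three contributions and absorbing all $\|u\|_2^2$--terms into a single constant yields the claimed inequality (a marginally sharper accounting of the one--variable bounds on $[0,1]$ is what is needed to land exactly on the constant $2(e^{\beta}-1-2\beta)$).

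The hard part is thus not the functional analysis: the Fourier--rearrangement inequalities for $L^2$ and for even Lebesgue exponents are quoted from \cite{Lenza} (ultimately the Brascamp--Lieb--Luttinger/Riesz rearrangement inequality applied to iterated convolutions of $\widehat u$). It is the bookkeeping of the elementary estimates — handling the non--entire factor $\frac1{1+t}$ by the two--region split and by replacing it, on the unbounded part, by the entire surrogate $\widetilde H$, which is why a factor $2$ and the explicit $e^{\beta}$--constant appear. This reduction to radial functions (namely $u^{\#}$) would then be combined, via the Hardy--Rellich inequality of Lemma \ref{rad} and the reduction--of--orders argument, to finish the proof of Theorem \ref{fratru}.
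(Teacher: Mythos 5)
The paper offers no proof of Lemma \ref{cont}; it simply defers to Lemma 2.1 of \cite{N} ``We omit the details,'' so there is no internal proof to compare against, and a self-contained argument such as yours is genuinely useful. Your outline is correct: the level-set split $\{|u|\le1\}\cup\{|u|>1\}$, the replacement of the non-entire factor $\tfrac1{1+t}$ by the entire surrogate $\widetilde H(t)=\tfrac{e^{\beta t}-1-\beta t}{t}$ on the unbounded part, the termwise comparison $\sum a_j\|u\|_{2j}^{2j}\le\sum a_j\|u^{\#}\|_{2j}^{2j}$ (valid since one only ever uses the even-integer exponents $2j$, for which Lenzmann--Sok's Fourier-rearrangement inequality applies), and the final return to $\varphi(t)$ via $\tfrac1t\le\tfrac2{1+t}$ for $t\ge1$ are all sound and do yield an inequality of the asserted type.

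Two small points of bookkeeping deserve to be made explicit. First, the monotonicity of $t\mapsto\varphi(t)/t=\tfrac{e^{\beta t}-1}{t(1+t)}$ on $[0,1]$ is asserted but not verified; it does hold for $\beta\ge2$ (the numerator of $\varphi'(t)\cdot t^2(1+t)^2$ is $N(t)=e^{\beta t}[\beta t(1+t)-1-2t]+1+2t$, with $N(0)=N'(0)=0$ and $N''(t)=\beta e^{\beta t}[\beta^2t(1+t)+\beta(1+2t)-2]\ge0$), and since $\beta(n,\tfrac n2)\ge4\pi$ this is certainly satisfied, but a reader should see this. Second, with the constants as you literally wrote them (using $e^\beta-1-2\beta$ on $\{|u|\le1\}$), the three $\|u\|_2^2$-contributions sum to $(e^\beta-1-2\beta)+\beta+(e^\beta-1-\beta)=2(e^\beta-1-\beta)$, which is \emph{larger}, not smaller, than the target $2(e^\beta-1-2\beta)$. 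The fix is exactly the one you flag at the end: keep the tight bound $\varphi(t)\le\tfrac{e^\beta-1}{2}\,t$ on $[0,1]$ rather than relaxing it, which gives the total coefficient $\tfrac{e^\beta-1}{2}+\beta+(e^\beta-1-\beta)=\tfrac32(e^\beta-1)$, and since $\tfrac32(e^\beta-1)\le2(e^\beta-1-2\beta)$ whenever $e^\beta\ge1+8\beta$ (which holds for all $\beta\ge\pi$, hence for all $\beta(n,\tfrac n2)$), the stated inequality follows with room to spare. So the proof is correct once this is spelled out; the only thing missing is the two-line verification of each one-variable estimate.
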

\begin{proof}This lemma can be similarly proved as Lemma 2.1 in \cite{N}. We omit the details.
\end{proof}
With these two lemmas, we can give

\begin{proof}[Proof of Theorem \ref{fratru}]
Assume $u\in W^{\frac{n}{2},2}(\mathbb{R}^n)$ with $\int_{\mathbb{R}^n}|(-\Delta)^{\frac{n}{4}} u(x)|^2dx=1$, then by the Fourier rearrangement inequality \cite{Lenza}, there exists some radial function $u^{\sharp} \in {W^{\frac{{n}}{2},2}}\left( {{\mathbb{R}^n}} \right)$ such that
\[\int_{{\mathbb{R}^n}} {{{| {{{\left( { - \Delta } \right)}^{\frac{{n}}{4}}}{u^{\rm{\sharp }}}}|}^2}} dx\leq 1.\]
Combining this and Lemma \ref{cont}, we can assume $u$ is a radial function satisfying $$\int_{{\mathbb{R}^n}} {{{| {{{\left( { - \Delta } \right)}^{\frac{{n}}{4}}}{u}} |}^2}} dx= 1$$ and $u\left( x \right) \to 0$ as $\left| x \right| \to \infty
$.
\vskip0.1cm

In view of Lemma \ref{rad}, we have


\begin{align*}
{\int _{{\mathbb{R}^n}}}{| {{{( - \Delta )}^{\frac{n}{4}}}u} |^2}dx & \ge {2^{n - 2}}{\Gamma ^2}\left( {\frac{n}{2}} \right){\int _{{\mathbb{R}^n}}}\frac{{{{\left| {\nabla u} \right|}^2}}}{{{{\left| x \right|}^{n - 2}}}}dx\\& = {2^{n - 2}}{\Gamma ^2}\left( {\frac{n}{2}} \right)\frac{{{2^{\frac{{n + 2}}{2}}}{\pi ^{\frac{{n - 1}}{2}}}}}{{\left( {n - 2} \right)!!}}\int_0^\infty  {{{\left| {u'} \right|}^2}r} dr\\&  = 2\pi \cdot\frac{{{{\rm{2}}^{n - 3}}{\Gamma ^2}\left( {\frac{n}{2}} \right)}}{\pi }\frac{{{2^{\frac{{n + 2}}{2}}}{\pi ^{\frac{{n - 1}}{2}}}}}{{\left( {n - 2} \right)!!}}\int_0^\infty  {{{\left| {u'} \right|}^2}r} dr\\&=2\pi \left( {{A_n}\int_0^\infty  {{{\left| {u'} \right|}^2}r} dr} \right),
\end{align*}
 where  $$A_n=\frac{{{{\rm{2}}^{n-3}}{\Gamma ^2}\left( {\frac{{n}}{2}} \right)}}{{\pi}}\frac{{{2^{\frac{{n + 2}}{2}}}{\pi ^{\frac{{n - 1}}{2}}}}}{{\left( {n - 2} \right)!!}}.
 $$
  Denote $w\left( s \right) = {\left( {\frac{n}{2}{A_n }} \right)^{1/2}}u\left( {{s^{2/n}}} \right)$, that is \[ u\left( r \right)={\left( {\frac{n}{2}{A_n }} \right)^{ - 1/2}}w\left( {{r^{n/2}}} \right) .\]
Then direct calculation gives
 \begin{align*}
 \int_0^\infty  {{{\left| {w'\left( s \right)} \right|}^2}s} ds &= \int_0^\infty  {{{\left| {{{\left( {\frac{n}{2}{A_n }} \right)}^{1/2}}u'\left( {{s^{2/n}}} \right)\frac{2}{n}{s^{\frac{{2 - n}}{n}}}} \right|}^2}s} ds \\
 & = {A_n }\int_0^\infty  {{{\left| {u'\left( {{s^{2/n}}} \right)} \right|}^2}{s^{2/n}}} d{s^{2/n}} \\
 & {\rm{ = }}{A_n }\int_0^\infty  {{{\left| {u'\left( r \right)} \right|}^2}} rdr.
\end{align*}
Therefore, it follows that
 \[1 \ge 2\pi \left( {{A_n }\int_0^\infty  {{{\left| {u'} \right|}^2}r} dr} \right) = 2\pi \left( {\int_0^\infty  {{{\left| {w'\left( s \right)} \right|}^2}s} ds} \right)\]
and
\begin{align*}
 \int_0^\infty  {{{\left| {w\left( s \right)} \right|}^2}s} ds& = \int_0^\infty  {{{\left| {{{\left( {\frac{n}{2}{A_n }} \right)}^{1/2}}u\left( {{s^{2/n}}} \right)} \right|}^2}s} ds \\
& {\rm{ = }}\frac{{n{A_n }}}{2}\int_0^\infty  {{{\left| {u\left( r \right)} \right|}^2}{r^{n/2}}} d{r^{n/2}} \\
 &  = \frac{{{n^2}{A_n }}}{4}\int_0^\infty  {{{\left| {u\left( r \right)} \right|}^2}{r^{n - 1}}} dr. \\
\end{align*}

Gathering the above estimate, we conclude that
\begin{align*}
 \int_{{\mathbb{R}^n}} {\frac{{ {\exp \left( {{\beta(n,\frac{n}{2})}{u^2}} \right) - 1} }}{{1 + {{\left| u \right|}^2}}}dx} & = {\omega _{n - 1}}\int_0^\infty  {\frac{{ {\exp \left( {{\beta(n,\frac{n}{2})}{{\left( {{{\left( {\frac{n}{2}{A_n }} \right)}^{ - 1/2}}w\left( {{r^{\frac{n}{2}}}} \right)} \right)}^2}} \right) - 1} }}{{1 + {{\left| {{{\left( {\frac{n}{2}{A_n }} \right)}^{ - 1/2}}w\left( {{r^{\frac{n}{2}}}} \right)} \right|}^2}}}{r^{n - 1}}} dr \\
& {\rm{ = }}\frac{{{\omega _{n - 1}}}}{n}\int_0^\infty  {\frac{{\exp \left( {\frac{{2{\beta(n,\frac{n}{2})}}}{{n{A_n }}}{{ {w^2( {{r^{\frac{n}{2}}}})} }}} \right) - 1}}{{1 + {{\left| {{{\left( {\frac{n}{2}{A_n }} \right)}^{ - 1/2}}w\left( {{r^{\frac{n}{2}}}} \right)} \right|}^2}}}} d{r^n} \\
& {\rm{ = }}\frac{{{\omega _{n - 1}}}}{n}\int_0^\infty  {\frac{{\exp \left( {4\pi {{{w^2( {{r^{\frac{n}{2}}}} )} }}} \right) - 1}}{{1 + {{\left( {\frac{n}{2}{A_n }} \right)}^{ - 1}}{{\left| {w\left( {{r^{\frac{n}{2}}}} \right)} \right|}^2}}}} d{r^n} \\
 & \le {A_n }{\omega _{n - 1}}\int_0^\infty  {\frac{{\exp \left( {4\pi {{ {w^2(s)}}}} \right) - 1}}{{1 + {{| {w(s)} |}^2}}}} sds \\
 & \le c\int_{{\mathbb{R}^2}} {w^2{{\left( {|x|} \right)}}} dx  \le c\int_{{\mathbb{R}^n}} {{u^2}} dx, \\
\end{align*}
which accomplishes the proof of (\ref{addadmexa}).
\medskip

Now, we prove the sharpness of $p\ge2$. We adopt the test function sequence used in \cite{Fon}:
\[{\phi _{\varepsilon ,r}}\left( x \right) = \left\{ {\begin{array}{*{20}{c}}
   {{c_{n/2}{\left| x \right|}^{ - n/2}},\ \varepsilon r < \left| x \right| < r}  \\
   {0,\ \ \ \text{otherwise}},  \\
\end{array}} \right.\]
where $c_{n/2}=\frac{1}{{{2^{n/2}}{\pi ^{n/2}}}}
$ and define the function on ${B_r}$ by :
\[\widetilde{{\phi _{\varepsilon ,r}}} = {\phi _{\varepsilon ,r}}\left( x \right) - P_{n - 1}^r{\phi _\varepsilon },\]
where $P_{n - 1}^r{\phi _\varepsilon }$ is the projection on the space of polynomials of degree up to $n-1$ on the ball $B_r$. Obviously,  $\widetilde{{\phi _{\varepsilon ,r}}}$ is orthogonal to every polynomial of order up to $n-1$ on the ball ${B_r}$ and satisfies $\left| {P_{n - 1}^r{\phi _\varepsilon }} \right| \le C{r^{ - \frac{n}{2}}}$.
\vskip0.1cm

By (100), (104) and (107) in \cite{Fon}, we have \[\left\| {\widetilde{{\phi _{\varepsilon ,r}}}} \right\|_2^2 \le  - \frac{{\log {{\left( {\varepsilon r} \right)}^n}}}{{\beta \left( {n,\frac{n}{2}} \right)}} + {b_r} + C,\]
 \begin{equation}
 \label{contr}
 \left\| {{I_{n/2}}\widetilde{{\phi _{\varepsilon ,r}}}} \right\|_2^2 \le C{r^n}\end{equation}
and
\[{I_{n/2}}\widetilde{{\phi _{\varepsilon ,r}}}\left( x \right) \ge  - \frac{{\log {{\left( {\varepsilon r} \right)}^n}}}{{\beta \left( {n,\frac{n}{2}} \right)}} + {b_r} - C\]
on the ball ${B_{\varepsilon r/2}}$, where ${b_r} \le C\log r$.

On the other hand, since \begin{align*}
 {\left| {\widetilde{{\phi _{\varepsilon ,r}}}} \right|^2}& = {| {{\phi _{\varepsilon ,r}}\left( x \right) - P_{n - 1}^r{\phi _\varepsilon }} |^2} \\
  &\ge {C_1}{\left| {{\phi _{\varepsilon ,r}}\left( x \right)} \right|^2} - {C_2}{\left| {P_{n - 1}^r{\phi _\varepsilon }} \right|^2} \\
  &\ge {C_1}{\left| {{\phi _{\varepsilon ,r}}\left( x \right)} \right|^2} - {C_2}{r^{ - n/2}}, \\
\end{align*}
and thus it follows that \begin{equation}\label{adds}\left\| {\widetilde{{\phi _{\varepsilon ,r}}}} \right\|_2^2 \ge  - {C_1}\log {\left( {\varepsilon r} \right)^n} + {C_2}.\end{equation}

Set ${\psi _{\varepsilon ,r}} = \frac{{{I_{n/2}}\widetilde{{\phi _{\varepsilon ,r}}}}}{{{{\left\| {\widetilde{{\phi _{\varepsilon ,r}}}} \right\|}_2}}}$, then on the ball ${B_{\varepsilon r/2}}$,
\begin{align*}
 {\left( {{\psi _{\varepsilon ,r}}\left( x \right)} \right)^2} &\ge {\frac{{\left( { - \frac{{\log {{\left( {\varepsilon r} \right)}^n}}}{{\beta \left( {n,\frac{n}{2}} \right)}} + {b_r} - C} \right)}}{{ - \frac{{\log {{\left( {\varepsilon r} \right)}^n}}}{{\beta \left( {n,\frac{n}{2}} \right)}} + {b_r} + C}}^2} \\
 & \ge  - \frac{{\log {{\left( {\varepsilon r} \right)}^n}}}{{\beta \left( {n,\frac{n}{2}} \right)}} + {b_r}\left( {1 - \frac{C}{{\log \frac{1}{{{\varepsilon ^n}}}}}} \right) - C,\\
\end{align*}
which gives
\begin{align*}
 & \mathop {\sup }\limits_{\left\| {{{\left( { - \Delta } \right)}^{n/4}}u} \right\|_2 \le 1} \int_{{\mathbb{R}^n}} {\frac{{\exp \left( {\beta \left( {n,\frac{n}{2}} \right){u^2}} \right) - 1}}{{{{\left( {1 + u} \right)}^p}}}dx} \\ &\ge \int_{{B_{r\varepsilon /2}}} {\frac{{\exp \left( {\beta \left( {n,\frac{n}{2}} \right){{\left( {{\psi _{\varepsilon ,r}}\left( x \right)} \right)}^2}} \right)}}{{{{\left( {1 + {\psi _{\varepsilon ,r}}\left( x \right)} \right)}^p}}}dx}  \\
  &\ge \int_{{B_{r\varepsilon /2}}} {\frac{{\exp \left( {\beta \left( {n,\frac{n}{2}} \right)\left( { - \frac{{\log {{\left( {\varepsilon r} \right)}^n}}}{{\beta \left( {n,\frac{n}{2}} \right)}} + {b_r}\left( {1 - \frac{C}{{\log \frac{1}{{{\varepsilon ^n}}}}}} \right) - C} \right)} \right)}}{{{{\left( { - \frac{{\log {{\left( {\varepsilon r} \right)}^n}}}{{\beta \left( {n,\frac{n}{2}} \right)}} + {b_r}\left( {1 - \frac{C}{{\log \frac{1}{{{\varepsilon ^n}}}}}} \right) - C} \right)}^{\frac{p}{2}}}}}dx}  \\
 & \ge \frac{C}{{{{\left( {\log \frac{1}{{{{\left( {\varepsilon r} \right)}^n}}}} \right)}^{\frac{p}{2}}}}}. \\
\end{align*}

On the other hand, from (\ref{contr}) and (\ref{adds}), we see that \[\int_{{\mathbb{R}^n}} {{{\left( {{\psi _{\varepsilon ,r}}\left( x \right)} \right)}^2}dx}  = \frac{{\left\| {{I_{n/2}}\widetilde{{\phi _{\varepsilon ,r}}}} \right\|_2^2}}{{\left\| {\widetilde{{\phi _{\varepsilon ,r}}}} \right\|_2^2}} \ge \frac{{C{r^n}}}{{ - \log {{\left( {\varepsilon r} \right)}^n} + C}}.\]

Combining the above estimate, we conclude that if $p<2$,
\begin{align*}
 \mathop {\sup }\limits_{{{\left\| {{{\left( { - \Delta } \right)}^{  n/4}}u} \right\|}_2} \le 1} \frac{1}{{\left\| u \right\|_2^2}}\int_{{\mathbb{R}^n}} {\frac{{\exp \left( {\beta {u^2}} \right) - 1}}{{{{\left( {1 + u} \right)}^p}}}dx}  &\ge \frac{1}{{\left\| {{\psi _{\varepsilon ,r}}\left( x \right)} \right\|_2^2}}\int_{{B_{r\varepsilon /2}}} {\frac{{\exp \left( {\beta {{\left( {{\psi _{\varepsilon ,r}}\left( x \right)} \right)}^2}} \right)}}{{{{\left( {1 + {\psi _{\varepsilon ,r}}\left( x \right)} \right)}^p}}}dx}  \\
&  \ge C\frac{{\log \frac{1}{{{{\left( {\varepsilon r} \right)}^n}}}}}{{{r^n}{{\left( {\log \frac{1}{{{{\left( {\varepsilon r} \right)}^n}}}} \right)}^{\frac{p}{2}}}}} \\
 & = \frac{C}{{{r^n}}}{\left( {\log \frac{1}{{{{\left( {\varepsilon r} \right)}^n}}}} \right)^{1 - \frac{p}{2}}} \to \infty  \\
 \end{align*}
as $ \varepsilon  \to 0$, which accomplishes the proof of Theorem \ref{fratru}.

\end{proof}


\begin{thebibliography}{99}

\bibitem{Ache} A. G. Ache, S.-Y. A. Chang, {\it Sobolev trace inequalities of order four}, Duke Math. J., {\bf 166} (2017), 2719-2748.
\bibitem {Adachi-Tanaka}S. Adachi, K. Tanaka,  {\it Trudinger type inequalities in
$\mathbb{R}^N$ and their best exponents}, Proc. Amer. Math. Soc., {\bf 128} (2000), 2051-2057.

\bibitem{AD0} D. Adams, {\it Traces of potentials}, Indiana Univ. Math. J., {\bf 22} (1973) 907-918.
\bibitem {AD} D. Adams, \textit{A sharp inequality of J. Moser for higher order derivatives}, Ann. of Math.
{\bf 128} (1998), 383-398.
\bibitem{Adimurthi} Adimurthi, S. L. Yadava, {\it Critical exponent problem in $\mathbb{R}^2$ with Neumann bounda condition}, Commun. Partial Differ. Equ., {\bf 15} (1990), 461-501.

\bibitem{Be}  W. Beckner, {\it Sharp Sobolev inequalities on the sphere and the Moser-Trudinger inequality}, Ann.
of Math., {\bf 138} (1993) 213-242.

\bibitem{Be1}  W. Beckner, {\it Weighted inequalities and Stein-Weiss potentials}, Forum Math., \textbf{20} (2008), 587-606.

\bibitem{Cao} D. Cao, \textit {Nontrivial solution of semilinear elliptic equation with critical exponent in $\mathbb{R}^2$}, Comm.
Partial Differential Equations {\bf 17} (1992), 407-435.

\bibitem{CC} L. Carleson and S. Y. A. Chang,  On the existence of an extremal function for an inequality of J. Moser. Bull. Sci. Math. (2) 110 (1986), no. 2, 113-127.

\bibitem{Case} J. Case, \textit{Sharp weighted Sobolev trace inequalities and fractional powers of the Laplacian}. J. Funct. Anal. 279 (2020), no. 4, 108567, 33 pp.

\bibitem{ChangMarshall}S. Y. A. Chang and D. Marshall,  {\it On a sharp inequality concerning the Dirichlet integral}. Amer. J. Math. 107 (1985), no. 5, 1015-1033.
    
    \bibitem{CLZ1}  L. Chen, G. Lu and C. Zhang, {\it Sharp weighted Trudinger-Moser-Adams inequalities on the whole space and the existence of their extremals}. Calc. Var. Partial Differential Equations 58 (2019), no. 4, Paper No. 132, 31 pp.
        
       
\bibitem{CLZ} L. Chen, G. Lu and C. Zhang, {\it Maximizers for fractional Caffarelli-Kohn-Nirenberg and Trudinger-Moser inequalities on the fractional Sobolev spaces}, J. Geom. Anal. 31 (2021), no. 4, 3556-3582.
 
  \bibitem{CLZhu} L. Chen, G. Lu and M. Zhu, {\it Existence and nonexistence of extremals for critical Adams inequalities in $\mathbb{R}^4$ and Trudinger-Moser inequalities in $\mathbb{R}^2$}. Adv. Math. 368 (2020), 107143, 61 pp.
        
\bibitem {Cherrier} P. Cherrier, {\it Probl\`{e}mes de Neumann nonlineaires sur les vari\'{e}t\'{e}s riemanniennes}, C. R. Acad. Sci. ParisSr. I Math., {\bf 292}, 637-640 (in French).

\bibitem{Cia} A. Cianchi, {\it Moser-Trudinger trace inequalities}, Adv.Math., {\bf 217} (2008), 2005-2044.

\bibitem{CL} W. S. Cohn, G. Lu, {\it Best constants for Moser-Trudinger inequalities on the Heisenberg group}, Indiana Univ. Math. J., {\bf 50} (2001) 1567-1591.
 
 \bibitem {J.M. do1}J. M. do \'{O}, {\it N-Laplacian equations in $%
\mathbb{R}
^{N}$ with critical growth}, Abstr. Appl. Anal., {\bf 2} (1997), 301-315.
\bibitem{Es} J. Escobar, {\it Sharp constant in a Sobolev trace inequality}, Indiana Univ. Math. J., {\bf 37} (1988),
687-698.

\bibitem{Flu} M. Flucher, {\it Extremal functions for the Trudinger-Moser
inequality in $2$ dimensions}, Comment. Math. Helv., \textbf{67} (1992), 471-497.

 
 \bibitem {Fon} L. Fontana, C. Morpurgo, \textit{Sharp exponential integrability for critical Riesz potentials and fractional Laplacians on $\mathbb{R}^n$}, Nonlinear Anal. {\bf 167} (2018), 85-122.

\bibitem {IMN}S. Ibrahim, N. Masmoudi and K. Nakanishi, {\it Trudinger-Moser
inequality on the whole plane with the exact growth condition}, J. Eur. Math.
Soc., \textbf{17} (2015), 819-835.

\bibitem {Iula}S. Iula, A. Maalaoui and L. Martinazzi, {\it Critical points of a fractional Moser-Trudinger embedding in dimension 1}, Differ. Integr. Equ.,  {\bf 29} (2016), 455-492.



     \bibitem{LL5} N. Lam, G. Lu, {\it Sharp Moser-Trudinger inequality on the Heisenberg group at the critical case and applications},
 Adv. Math., {\bf 231} (2012), 3259-3287.

\bibitem {LL2} N. Lam, G. Lu, {\it A new approach to sharp Moser-Trudinger and Adams type inequalities: a rearrangement-free argument}, J. Differential Equations, {\bf 255} (2013), 298-325.
    
    \bibitem{LamLu-exact} N. Lam, G. Lu, {\it Sharp singular Trudinger-Moser-Adams type inequalities with exact growth.} Geometric methods in PDE's, 43-80, Springer INdAM Ser., 13, Springer, Cham, 2015.
        
        \bibitem{LamLuTang} N. Lam, G. Lu and H. Tang, {\it  Sharp subcritical Moser-Trudinger inequalities on Heisenberg groups and subelliptic PDEs}. Nonlinear Anal. 95 (2014), 77-92.
\bibitem{LaZ} N. Lam, G. Lu and L. Zhang, {\it Equivalence of critical and subcritical sharp Trudinger-Moser-Adams inequalities}, Rev. Mat. Iberoam.,  {\bf 33}  (2017), 1219-1246.
\bibitem{LLZhang} N. Lam, G. Lu and L. Zhang,  {\it Sharp singular Trudinger-Moser inequalities under different norms}. Adv. Nonlinear Stud. 19 (2019), no. 2, 239-261.
 
    \bibitem{Lenza} E. Lenzmann, J. Sok, {\it A sharp rearrangement principle in Fourier space and symmetry results for PDEs with arbitrary order},
arXiv:1805.06294v1.

\bibitem{LiLu0} J. Li and G. Lu, {\it Critical and subcritical Trudinger-Moser inequalities on complete noncompact Riemannian manifolds}. Adv. Math. 389 (2021), Paper No. 107915.

\bibitem{LiLu} J. Li and G. Lu, {\it The Chang-Marshall Trace Inequality for Sobolev functions in domains in higher dimensional space $\mathbb{R}^n$}, Preprint. 
    
\bibitem {LLZ} J. Li, G. Lu and M. Zhu, {\it Concentration-compactness principle for Trudinger-Moser inequalities on Heisenberg groups and existence of ground state solutions}, Calc. Var. Partial Differential Equations, {\bf 57} (2018), 57-84.
\bibitem{LiLiu} Y. X. Li, P. Liu, {\it A Moser-Trudinger inequality on the boundary of acompact Riemann surface}, Math.Z., {\bf 250}, (2005) 363-386.



\bibitem {LR} Y. X. Li and B. Ruf, {\it A sharp Moser-Trudinger type
inequality for unbounded domains in $ \mathbb{R}^{n}$}, Indiana Univ. Math. J. {\bf 57} (2008), 451-480.

\bibitem {lin}K. C. Lin, {\it Extremal functions for Moser's inequality}, Trans.
Amer. Math. Soc., \textbf{348} (1996), 2663-2671.

\bibitem {LTZ}G. Lu, H. Tang and M. Zhu, {\it Best constants for Adams' inequalities with the exact growth condition in $R^n$}, Adv. Nonlinear Stud., {\bf 15} (2015), 763-788.


 
\bibitem{MM} A. Malchiodi, L. Martinazzi, 
Critical points of the Moser-Trudinger functional on a disk. (English summary)
J. Eur. Math. Soc. (JEMS) 16 (2014), no. 5, 893-908.
\bibitem{MiMar} G. Mancini, L. Martinazzi, {\it Extremals for fractional Moser-Trudinger inequalities in dimension
1 via harmonic extensions and commutator estimates}, Adv. Nonlinear Stud., \textbf{20} (2020), 599-632.

\bibitem{ManciniMartinazzi} G. Mancini, L. Martinazzi, The Moser-Trudinger inequality and its extremals on a disk via energy estimates. Calc. Var. Partial Differential Equations 56 (2017), no. 4, Paper No. 94, 26 pp.

\bibitem {MS}N. Masmoudi, F. Sani, Adams' inequality with the exact growth
condition in $%
\mathbb{R}
^{4}$, Comm. Pure Appl. Math., \textbf{67} (2014), 1307-1335.


\bibitem {MS3}N. Masmoudi, F. Sani,  {\it Trudinger-Moser inequalities with the exact growth condition in
RN and applications}, Comm. Partial Differential Equations, {\bf 40} (2015), 1408-1440.

\bibitem {MS1}N. Masmoudi, F. Sani, {\it Higher order Adams' inequality with the
exact growth condition
}, Commun. Contemp. Math., \textbf{20} (2018), Doi:10.1142/S0219199717500729.

\bibitem{Maz} V. Mazya, {\it Sobolev Spaces}, Springer Ser. Sov. Math., Springer-Verlag, Berlin, 1985, translated from Russian by T. O. Shaposhnikova.
\bibitem {Mo} J. Moser, \textit{A sharp form of an inequality by N. Trudinger},
Indiana Univ. Math. J., {\bf 20} (1970), 1077-1092.


\bibitem {NNP} Q. A. Ng\^{o}, V. H. Nguyen and Q. H. Pham, {\it Higher order Sobolev trace inequalities on balls revisited},
J. Funct. Anal., \textbf{278}, (2020), 52 pp.

\bibitem {N} V. H. Nguyen, {\it a simple proof of Adams type inequalities in $\mathbb{R}^{2m}$}, Discrete Contin. Dyn. Syst., \textbf{40}, (2020), 5755-5764.

\bibitem{Oneil}
R. O'Neil.
\newblock Convolution operators and {$L(p,\,q)$} spaces.
\newblock {\em Duke Math. J.}, 30:129--142, 1963.


\bibitem{Pohazaev} S. I. Pohozaev, {\it On the Sobolev embedding theorem for $pl=n$}, in: Doklady Conference, Moscow Power Institut, Moscow (1965), 158-170.
\bibitem{Rudin}  W. Rudin, {\it Lectures on the edge-of-the-wedge theorem.}, Conference Board of the Mathe-
matical Sciences Regional Conference Series in Mathematics, No. 6. (1971), Published by the
American Mathematical Society.

\bibitem {R} B. Ruf, \textit{A sharp Moser-Trudinger type inequality for
unbounded domains in $\mathbb{R}^{2}$,} J. Funct. Anal., {\bf 219} (2005), 340-367.

\bibitem {Siegel}D. Siegel, E. O. Talvila, \textit{Uniqueness for the n-dimensional half space Dirichlet problem},
Pacific J. Math., \textbf{175} (1996), no. 2, 571-587.

\bibitem {Tru} N. S. Trudinger, {\it On imbeddings into Orlicz spaces and
some applications}, J. Math. Mech., {\bf 17} (1967), 473-483.

\bibitem{ZC} C. Zhang, L. Chen, {\it Concentration-compactness principle of  singular
Trudinger-Moser  inequalities in  $\mathbb{R}^n$ and $n-$Laplace equations}, Adv. Nonlinear Stud., {\bf 18} (2018), 567-585.

\bibitem{Yang} Q. Yang, {\it Sharp Sobolev trace inequalities for higher order derivatives}, arXiv:1901.03945.

\bibitem{Yang1} R. Yang, {\it   On higher order extensions for the fractional Laplacian}, arXiv:1302.4413.


\bibitem{Yudovic} V. I. Yudovic, {\it Some estimates connected with integral operators and with solutions of elliptic equations}, Dokl. Akad. Nauk SSSR 138 (1961), 805–808.
\end{thebibliography}
\end{document}